\numberwithin{equation}{section}
\g@addto@macro\bfseries{\boldmath}
\setlist{nolistsep}
\newcolumntype{L}[1]{>{\raggedright\let\newline\\\arraybackslash\hspace{0pt}}m{#1}}
\newcolumntype{C}[1]{>{\centering\let\newline\\\arraybackslash\hspace{0pt}}m{#1}}
\newcolumntype{R}[1]{>{\raggedleft\let\newline\\\arraybackslash\hspace{0pt}}m{#1}}
\newcolumntype{N}{@{}m{0pt}@{}}
\newcommand{\R}{\mathbb{R}}
\newcommand{\N}{\mathbb{N}}
\newcommand{\C}{\mathsf{C}}
\newcommand{\A}{\mathsf{S}}
\newcommand{\bfH}{\boldsymbol{H}}
\newcommand{\bfL}{\boldsymbol{L}}
\newcommand{\Sym}{\mathbb{S}}
\newcommand{\Skw}{\mathbb{A}}
\newcommand{\Mat}{\mathbb{M}}
\newcommand{\mesh}{\mathcal{T}}
\newcommand{\Efunc}{\mathcal{E}}
\newcommand{\mD}{\mathcal{D}}
\newcommand{\dd}{\,\mathrm{d}}
\newcommand{\T}{\mathsf{T}}
\newcommand{\sfN}{\mathsf{N}}
\newcommand{\sfR}{\mathsf{R}}
\newcommand{\bdry}{\partial}
\newcommand{\tr}{\mathrm{tr}}
\newcommand{\opt}{\mathrm{opt}}
\newcommand{\enr}{\mathrm{enr}}
\newcommand{\res}{\mathrm{res}}
\DeclareMathOperator*{\argmin}{arg\,min}
\DeclareMathOperator{\grad}{grad}
\DeclareMathOperator{\curl}{curl}
\let\div\relax 
\DeclareMathOperator{\div}{div}
\newcommand{\DIV}{\mathrm{div}}
\newtheoremstyle{boldremark}
    {\dimexpr\topsep/2\relax} 
    {\dimexpr\topsep/2\relax} 
    {}          
    {}          
    {\bfseries} 
    {.}         
    {.5em}      
    {}          
\theoremstyle{definition}
\newtheorem*{definition*}{Definition}
\theoremstyle{plain}
\newtheorem{theorem}{Theorem}
\numberwithin{theorem}{section}
\newtheorem{lemma}{Lemma}
\numberwithin{lemma}{section}
\newtheorem{corollary}{Corollary}
\numberwithin{corollary}{section}
\theoremstyle{boldremark}
\newtheorem{remark}{Remark}
\numberwithin{remark}{section}
\newcommand{\dashto}[1][2pt]{
	\settowidth{\@tempdima}{${}\rightarrow{}$}
	\makebox[\@tempdima]{${}\rightarrow{}$}
  \makebox[-\@tempdima]{\hspace{-0.1\@tempdima}\color{white}\rule[0.5ex]{#1}{1pt}}
  \makebox[\@tempdima]{}
	}
\let\tilde\widetilde
\newcommand{\jump}[1]{\llbracket #1 \rrbracket}
\DeclareRobustCommand{\widefrac}[3][5pt]{%
  \frac{\hspace{#1}#2\hspace{#1}}{\hspace{#1}#3\hspace{#1}}}
\newcommand{\scS}{\mathscr{S}}
\newcommand{\scU}{\mathscr{U}}
\newcommand{\scD}{\mathscr{D}}
\newcommand{\scM}{\mathscr{M}}
\newcommand{\scP}{\mathscr{P}}
\newcommand{\scV}{\mathscr{V}}
\newcommand{\scW}{\mathscr{W}}
\newcommand{\fkn}{\mathfrak{n}}
\newcommand{\fku}{\mathfrak{u}}
\newcommand{\fkv}{\mathfrak{v}}
\newcommand{\fkw}{\mathfrak{w}}
\begin{document}

\title{The DPG methodology applied to different variational formulations of linear elasticity}
\author[*]{Brendan Keith}
\author[*]{Federico Fuentes}
\author[*]{Leszek Demkowicz}
\affil[*]{The Institute for Computational Engineering and Sciences (ICES), The University of Texas at Austin, 201 E 24th St, Austin, TX 78712-1229, USA}
\date{}

\maketitle

\renewcommand{\abstractname}{\large Abstract}
\begin{abstract}
\small 
The flexibility of the DPG methodology is exposed by solving the linear elasticity equations under different variational formulations, including some with non-symmetric functional settings (different infinite-dimensional trial and test spaces).
The family of formulations presented are proved to be mutually ill or well-posed when using traditional energy spaces on the whole domain.
Moreover, they are shown to remain well-posed when using broken energy spaces and interface variables.
Four variational formulations are solved in 3D using the DPG methodology.
Numerical evidence is given for both smooth and singular solutions and the expected convergence rates are observed.
\end{abstract}

\pagenumbering{arabic}
\section{Introduction}
\label{sec:Introduction}

In this paper we demonstrate the fitness of the DPG finite element method with optimal test spaces on various variational formulations of the nondimensionalized equations of linear elasticity,
\begin{equation}
 \label{eq:LinElast}
 \begin{alignedat}{3}
  -\div(\C: \varepsilon(u)) &= f\,,\quad &&\text{in }\, \Omega\,,\\
  u &= u_0\,, \quad &&\text{on }\, \Gamma_0\,,\\
  (\C: \varepsilon(u))\cdot\fkn &= g\,, \quad &&\text{on }\, \Gamma_1\,.
 \end{alignedat}
\end{equation}
We take $\Omega$ to be a simply connected smooth domain in $\R^3$ and let $\Gamma_0$ and $\Gamma_1$ be a partition of the boundary, $\overline{\Gamma_0\cup\Gamma_1} = \bdry\Omega$ with outward unit normal, $\fkn$.
Here, $u$ is the displacement, $\varepsilon(u) = \frac{1}{2}(\nabla u + \nabla u^\T)$ is the associated strain, $f$ is the body force, $g$ is the traction,\footnote{If $\overline{\Gamma_1} = \bdry \Omega$, then $f$ and $g$ must satisfy Signorini's compatibility condition $\int_\Omega f\cdot v \dd\Omega + \int_{\Gamma_1} g\cdot v \dd\Gamma = 0$ for all infinitesimal rigid displacements, $v$.} and $u_0$ is the prescribed displacement.
Meanwhile, 
$\C:\Sym\to\Sym$, is the elasticiy or stiffness tensor, where $\Sym$ denotes all symmetric $3\times3$ matrices.
For isotropic materials, it is expressed as $\C_{ijkl} = \lambda \delta_{ij}\delta_{kl} + \mu (\delta_{ik}\delta_{jl}+\delta_{il}\delta_{jk})$,
where $\lambda$ and $\mu$ are the Lam\'e parameters.

It can be shown that the standard Bubnov-Galerkin finite element method for linear elasticity computes the unique minimizer of the energy functional $\Efunc_1(v)=\int_\Omega (\frac{1}{2}\varepsilon(v):\C:\varepsilon(v)-f\cdot v)\dd\Omega -\int_{\Gamma_1}g\cdot v \dd\Gamma$, over all candidates, $v$, in a discrete space of displacements, $U_h$.
By proceeding from an energy minimization we guarantee to compute the best possible solution (measured in the energy) allowed in our set of computable solutions (trial space).
In this sense, the formulation has the obvious desirable quality that there is a meaningful metric of solution relevance as measured by the energy functional.
The typical approach in commercial software is to use exactly the standard Bubnov-Galerkin variational formulation to simulate and predict elastic behavior in materials.

Notwithstanding the above method, there are important circumstances where such a simple energy minimization principle is avoided.
Another prominent variational formulation for linear elasticity is the well-known mixed method \cite{brezzi2012mixed}. These discretizations stem from the minimax problem on the Hellinger-Reissner energy functional $\Efunc_2(\tau,v) = -\int_\Omega \big(\frac{1}{2}\tau : \C^{-1} : \tau + \DIV\tau\cdot v + f\cdot v \big)\dd\Omega + \int_{\Gamma_0}u_0\cdot(\tau\!\cdot\!\fkn) \dd\Gamma$ \cite{mixedelas3d}, an energy principle equivalent to minimization of $\Efunc_1$ \cite{Ciarlet13,Ekeland1976}.
Here $v$ is a displacement variable and $\tau=\tau^\T$ is a stress variable.
Such a formulation results in a discretization which avoids volumetric locking and also guarantees a locally conservative stress tensor \cite{brezzi2012mixed}.
Of course, this formulation also guarantees a best possible solution although it is measured in a different way and the trial spaces differ.

Likewise, other energy principles exist for linear elasticity problems.
In fact, just for this single problem a total of $14$ complementary-dual energy principles are presented in \cite{Oden76}, each leading to a different variational formulation.
Some may not be easily amenable to computation but perspective should be given that there is little to regard as sacred or more physical about one formulation over another.
Ultimately, whatever the physical principle (energy functional) employed, the equations of linear elasticity are ubiquitous; beyond their functional setting, they do not change even though they can be derived in different ways and posed over different spaces.
In principle, at the infinite-dimensional level the solution will always be the same but at the computational level the differences can become very important.

In the DPG method, we do not make a quandary over the best physical principle to employ for our choice of optimality.
Instead, without access to the exact solution outright, we seek the best numerical solution available to us once the trial space and variational formulation are set.
This is achieved by considering a minimization problem on the residual of the discrete solution taken through a user-defined norm in the discrete test space.
The ramifications of this methodology are substantial, however analyzing most of them are not the particular focus of this paper. Instead, we intend only to demonstrate the utility of the methodology on various variational formulations.
We will now outline some of the history and recent developments of DPG.

The optimal stability DPG methodology \cite{demkowicz2010class,demkowicz2011class}, referred here simply as ``DPG'', was originally envisioned as a practical Petrov-Galerkin finite element method which would naturally transfer the stability of the infinite dimensional problem onto the discrete system.
This is achieved by exploiting a natural isometry between a Hilbert space and its dual, called the Riesz map, and the ability to localize its computation by using broken test spaces.
In a difficult problem, instead of tuning stability parameters as is commonplace in standard stabilized methods, the DPG method algorithmically approximates an \textit{optimal test space} to a tuneable accuracy in a way that applies to all well-posed variational problems.
The tuning parameter in the DPG method is usually the order of a local test space called the \textit{enriched test space} where the Riesz map (in the user-defined norm) is computed.
The larger this parameter, the more accurate the approximation of the optimal test space. 
For every computation in this paper we found it sufficient to choose an enriched test space one order larger than the trial space.
Using a larger enrichment may not be a great hindrance, because the feasibility of the method is offered from the fact that all computations on this higher order enriched space are localized.
Therefore, provided the the element-local computations have been distributed (which can be done in parallel) and are made efficiently, the computational cost of the method is essentially independent of the enrichment parameter.
However, each element-local computation can sometimes be computationally intensive if the enrichment parameter is too high.
In this context, the choice of the user-defined norm of the test space can play a fundamental role in efficiently obtaining a well-approximated optimal test space while only requiring a modest enrichment parameter.

DPG distinguishes the trial and test space differently from the outset and because of this trait it is applicable to often neglected, non-symmetric variational formulations.
This originally led to the DPG method with ultraweak variational formulations, a formulation wherein the trial space is naturally discontinuous.
Some highlights of ultraweak variational formulations are given in \cite{roberts2014dpg,DemkowiczGopalakrishnan13_2,Carstensen15,bui2013unified}. 
Indeed, in this setting, DPG has largely been applied to singular perturbation problems and other problems in computational mechanics where stability is difficult to achieve such as advection diffusion \cite{chan2014robust} and thin-body problems \cite{Niemi2011}.
Recently, DPG has been applied in the context of space-time problems in \cite{Ellis2015}.

Usually DPG operates with a discontinuous test space.
However, the trial space must be \textit{globally} conforming and for this reason, it is somewhat unique among discontinuous Galerkin methods \cite{bui2011relation,demkowicz2013primal}.
Indeed, in this paper we intend to emphasize that DPG is not limited only to ultraweak variational formulations.
In fact, we will show that a reformulation of a variational problem over a broken test space can be seen as a way of embedding the original variational problem into a larger one.
We then show that this new variational problem over broken test spaces is well-posed if and only if the original unbroken problem is well-posed.
Therein, because DPG inherits its stability from the underlying infinite dimensional problem, we always guarantee convergence of the method, provided the enrichment parameter is large enough.



In the context of linear elasticity, the DPG ultraweak setting has been applied to 2D problems in \cite{Bramwell12} resulting in two different methods, one of which has been complemented by a complete error analysis in \cite{gopalakrishnan2014analysis}.
There is also work in 2D elasticity for low order methods \cite{HellwigThesis}.
In this article we contribute to the previous DPG ventures in linear elasticity by implementing the method for the 3D equations in four different variational formulations.
We also apply a newly developed theory for broken variational formulations \cite{Carstensen15}, which we use to prove their stability.
Lastly, we include what we believe is the contemporary observation that all of the variational formulations which we have considered are mutually well or ill-posed (a similar assertion has been proved in the context of Maxwell equations in \cite{Carstensen15}).
This is important because it avoids having to present an independent proof of well-posedness for each different variational formulation.



\subsection{Outline}



In Section \ref{sec:LinearElasticity} we propose five variational formulations for linear elasticity.
These equations arise naturally by formal integration by parts of a first order system equivalent to \eqref{eq:LinElast}.
The formulations are observed to be mutually ill or well-posed.

In Section \ref{sec:BrokenSpaces} we define the broken energy spaces along with necessary interface (or broken trace) spaces. 
Using these spaces, we derive the associated five variational formulations in the \textit{broken} setting.
We close this section by demonstrating the well-posedness of each of these formulations upon the assumption that the respective ``unbroken'' formulations in Section~\ref{sec:LinearElasticity} are also well-posed.

In Section \ref{sec:MinResidual} we demonstrate how each of these formulations fit into the DPG framework and elaborate upon the specifics of the DPG methodology, including the computation of the residual to use in adaptivity. 

Finally, in Section \ref{sec:Numerics} we present our numerical experiments of the DPG method with four variational formulations applied to 3D smooth and singular linear elasticity problems.



\section{Linear elasticity and some variational formulations}
\label{sec:LinearElasticity}

\subsection{Energy spaces}

As a prelude to the variational formulations defined in this section, we must first describe the functional spaces where the trial and test variables lie.
These are typically known as energy spaces.
First, we define the most basic underlying energy spaces and norms for a domain $\Omega$,
\begin{equation}
	\begin{alignedat}{5}
		L^2(\Omega)&\!=\!\{u:\Omega\to\R\mid\|u\|_{L^2(\Omega)}<\infty\}\,\,&&\text{with}\,\,
			&\|u\|_{L^2(\Omega)}^2&&&\!=\!\textstyle{\int_{\Omega}|u|^2\dd\Omega}\,,\\
		H^1(\Omega)&\!=\!\{u:\Omega\to\R\mid\|u\|_{H^1(\Omega)}<\infty\}\,\,&&\text{with}\,\,
			&\|u\|_{H^1(\Omega)}^2&&&\!=\!\|u\|_{L^2(\Omega)}^2+\textstyle{\int_{\Omega}|\nabla u|^2\dd\Omega}\,,\\
		H(\div,\Omega)&\!=\!\{v:\Omega\to\R^3\mid\|v\|_{H(\div,\Omega)}<\infty\}\,\,&&\text{with}\,\,
			&\|v\|_{H(\div,\Omega)}^2&&&\!=\!\textstyle{\int_{\Omega}|v|^2\dd\Omega}+\|\div(v)\|_{L^2(\Omega)}^2\,.
	\end{alignedat}
\end{equation}
Here, the functions are defined up to sets of measure zero, and $|\cdot|$ is the standard Euclidean norm.
Note the expressions here and throughout this work are dimensionally consistent because all physical variables and constants are assumed to have been nondimensionalized.

Next we define some of the vector and matrix energy spaces we will be using,
\begin{equation}
	\begin{aligned}
		\bfL^2(\Omega)&=\{u:\Omega\to\R^3\mid u_i\in L^2(\Omega),i=1,2,3\}\,,\\
		\bfL^2(\Omega;\mathbb{U})&=\{u:\Omega\to\mathbb{U}\mid u_{ij}\in L^2(\Omega),i,j=1,2,3\}\,,\\
		\bfH^1(\Omega)&=\{u:\Omega\to\R^3\mid u_i\in H^1(\Omega),i=1,2,3\}\,,\\
		\bfH(\div,\Omega)&=\{\sigma:\Omega\to\Mat\mid (\sigma_{i1},\sigma_{i2},\sigma_{i3})\in H(\div,\Omega),i=1,2,3\}\,,\\
	\end{aligned}
	\label{eq:vectormatrixenergyspaces}
\end{equation}
where $\mathbb{U}$ is a subspace of $\Mat$, the space of $3\times3$ matrices.
In particular $\mathbb{U}$ can be the symmetric matrices, $\Sym$, the antisymmetric (or skew symmetric) matrices, $\Skw$, or $\Mat$ itself.
The Hilbert norms are naturally defined from the underlying energy spaces.
That is, $\|u\|_{\bfL^2(\Omega)}^2=\sum_{i=1}^3\|u_i\|_{L^2(\Omega)}^2$, etc.
Finally it is useful to define certain subspaces which vanish at some parts of the boundary, $\bdry\Omega$, which is assumed to be Lipschitz.
Namely,
\begin{equation}
	\begin{aligned}
		\bfH_{\Gamma_0}^1(\Omega)&=\{u\in\bfH^1(\Omega)\mid u_i|_{\Gamma_0}=0,i=1,2,3\}\,,\\
		\bfH_{\Gamma_1}(\div,\Omega)&=\{\sigma\in\bfH(\div,\Omega)\mid
					(\sigma_{i1},\sigma_{i2},\sigma_{i3})|_{\Gamma_1}\cdot \fkn_{\Gamma_1}=0,i=1,2,3\}\,,\\
		\bfH_{\Gamma_1}(\div,\Omega;\Sym)&=\{\sigma:\Omega\to\Sym\mid \sigma\in\bfH_{\Gamma_1}(\div,\Omega)\}\,,
	\end{aligned}
\end{equation}
where $\Gamma_0$ and $\Gamma_1$ are relatively open subsets of $\bdry\Omega$ satisfying $\overline{\Gamma_0\cup\Gamma_1}=\bdry\Omega$ and $\Gamma_0\cap\Gamma_1=\varnothing$ and where $\fkn_{\Gamma_1}$ is the unit exterior normal to $\Omega$ along $\Gamma_1$.

Naturally all the energy spaces in \eqref{eq:vectormatrixenergyspaces} can be defined on a domain different from $\Omega$, such as on an arbitrary element $K\subseteq\Omega$.
In fact, when it is clear from the context, the domain is absorbed into the notation for convenience.

\subsection{First order systems}

The equations of linear elasticity can be derived, as was previously mentioned, from energy principles, but in reality, they represent a linear approximation of a nonlinear operator which is naturally expressed as a first order system.
Per convention, the linearization is done in the reference configuration about a zero displacement at which the stress is assumed to vanish (zero residual stress).
This first order system consists of two equations,
\begin{equation}
 \label{eq:LinElastSystem}
 -\div(\C:\varepsilon(u)) = f\quad \text{in }\, \Omega \qquad \iff \qquad
 \left\{
  \begin{aligned}
   \sigma - \C: \varepsilon(u)  & = 0\quad &&\text{in }\, \Omega\,,\\
   -\div\sigma &= f\quad &&\text{in }\, \Omega\,.
  \end{aligned}
 \right.
\end{equation}
The first equation is a linearization of the original constitutive law and relates the Cauchy stress tensor, $\sigma$, to the engineering strain tensor, $\varepsilon(u)$.
We note that this equation may be rewritten as
\begin{equation}
	\A:\sigma - \varepsilon(u) = 0\,,
	\label{eq:RobustConstitutiveLaw}
\end{equation}
where $\A=\C^{-1}:\Sym\to\Sym$, the inverse of $\C$ over symmetric matrices, is known as the compliance tensor.
For isotropic materials it is $\A_{ijkl}=\frac{1}{4\mu}(\delta_{ik}\delta_{jl}+\delta_{il}\delta_{jk})-\frac{\lambda}{2\mu(3\lambda+2\mu)}\delta_{ij}\delta_{kl}$
The second equation is the conservation of linear momentum in the reference configuration (with the understanding that the first Piola-Kirchhoff stress tensor is equal to the Cauchy stress tensor up to a small error in this standard linearized setting).
Conservation of angular momentum is contained implicitly in the assumption that $\sigma = \sigma^\T$.

\subsection{Variational equations}
\label{sec:variationalequations}

If we assume that $f\in \bfL^2(\Omega)$, the conservation law is equivalent to the variational equation
\begin{equation}
\label{eq:TrivialLinearMomentum}
 -\int_\Omega \div\sigma \cdot v \dd\Omega= \int_\Omega f\cdot v \dd\Omega \,, \quad \text{for all }\, v\in \bfL^2(\Omega)\,.
\end{equation}
Due to the symmetry of the stress tensor, $\sigma=\sigma^\T$, it is natural to consider $\sigma\in\tilde{g}+\bfH_{\Gamma_1}(\div,\Omega;\Sym)$, where $\tilde{g}\in\bfH(\div,\Omega;\Sym)$ is an extension of the prescribed boundary traction $g$ from $\Gamma_1$ to $\Omega$.
However, in practice, the space $\bfH_{\Gamma_1}(\div,\Omega;\Sym)$ is very difficult to discretize \cite{elas3dfamily,mixedelas3d,qiu2011mixed}.
Instead it is often assumed $\sigma\in\tilde{g}+\bfH_{\Gamma_1}(\div,\Omega)$, with the symmetry of $\sigma$ being imposed weakly through the extra equation,
\begin{equation}
	\label{eq:SigmaWeakSymmetry}
	\int_\Omega \sigma:w\dd\Omega = 0 \,, \quad \text{for all }\, w\in \bfL^2(\Omega;\Skw)\,,
\end{equation}
and where $\tilde{g}\in\bfH(\div,\Omega)$ is now a possibly different extension of $g$ from $\Gamma_1$ to $\Omega$.

Formally integrating \eqref{eq:TrivialLinearMomentum} by parts, an equation closely related to the principle of virtual work is obtained,
\begin{equation}
\label{eq:PplVirtualWork}
	\int_\Omega \sigma : \nabla v \dd\Omega = \int_\Omega f\cdot v \dd\Omega + \int_{\Gamma_1} g\cdot v \dd\Gamma\,,
		\quad \text{for all }\, v\in \bfH^1_{\Gamma_0}(\Omega)\,.
\end{equation}
Here, to enforce the symmetry it makes sense to take $\sigma\in\bfL^2(\Omega;\Sym)$ which is easy to discretize.
Note that $v\in \bfH^1_{\Gamma_0}(\Omega)$ in \eqref{eq:PplVirtualWork}, while $v\in\bfL^2(\Omega)$ in \eqref{eq:TrivialLinearMomentum}.

Likewise, after testing with $\tau$, the constitutive law in \eqref{eq:LinElastSystem} may be written in a variational form as
\begin{equation}
\label{eq:TrivialConstitutiveLaw}
	\int_\Omega \sigma:\tau\dd\Omega-\int_\Omega\nabla u:\C:\tau\dd\Omega= 0\,,
		\quad\text{for all }\,\tau\in \bfL^2(\Omega;\Sym) \,,
\end{equation}
where it was used $\varepsilon(u):\C=\nabla u:\C$, with the domain of $\C:\Sym\to\Sym$ being extended naturally to $\C:\Mat\to\Sym$ (i.e., $\C|_{\Skw}=0$).
Here, due to the presence of $\nabla u$, it makes sense to have $u\in\tilde{u}_0+\bfH_{\Gamma_0}^1(\Omega)$, where $\tilde{u}_0\in\bfH^1(\Omega)$ is an extension of the prescribed boundary displacement $u_0$ from $\Gamma_0$ to $\Omega$.

To get an alternate variational form of the constitutive equation it is more convenient to consider the characterization provided in \eqref{eq:RobustConstitutiveLaw}.
This equation is easier to integrate by parts and avoids volumetric locking in the limit of incompressible materials due to the use of the compliance tensor, $\A$, which is robust with respect to the Lam\'{e} parameters (in the sense that $\|\C\|\to\infty$ while $\|\A\|<\infty$ as $\lambda\to\infty$).
A first attempt at integrating this relation by parts with a symmetric $\tau=\tau^\T$ yields the expression $\div\tau$, meaning that one should take $\tau\in\bfH_{\Gamma_1}(\div,\Omega;\Sym)$.
This revives the difficulties of discretizing $\bfH_{\Gamma_1}(\div,\Omega;\Sym)$.
To overcome the issue, one must introduce an extra solution variable called the infinitesimal rotation tensor, $\omega$, which satisfies
\begin{equation}
	\nabla u=\varepsilon(u)+\omega\qquad\Rightarrow\qquad\A:\sigma - \nabla u + \omega = 0\,.
	\label{eq:RobustConstitutiveLawII}
\end{equation}
Testing and integrating by parts then yields
\begin{equation}
	\label{eq:RelaxedConstitutiveLaw}
 	\int_\Omega\sigma:\A:\tau\dd\Omega+\int_\Omega\omega:\tau\dd\Omega+\int_\Omega u\cdot\div\tau\dd\Omega
 		=\int_{\Gamma_0}u_0\cdot\tau n\dd\Gamma\,,\quad \text{for all }\,\tau\in \bfH_{\Gamma_1}(\div,\Omega)\,,
\end{equation}
where the domain of $\A$ is extended trivially from $\Sym$ to $\Mat$ (i.e., $\A|_\Skw=0$).
Here, it is natural to consider $u\in\bfL^2(\Omega)$ and $\omega\in\bfL^2(\Omega;\Skw)$, which are both easy to discretize.

\subsection{4+1 variational formulations} 
\label{sec:variational_formulations}

First, for sake of exposition, throughout the rest of this work we assume that the displacement and traction boundary conditions are homogeneous, meaning $u_0 = 0$ and $g = 0$ (so one can choose extensions $\tilde{u}_0=0$ and $\tilde{g}=0$).
As we have just demonstrated, the first order system of equations of linear elasticity can be posed in their weak form in a variety of ways.
Indeed, by simply making different choices about integrating by parts we can arrive at the following four variational formulations for linear elasticity.

\textbf{$(\scS)$ Strong formulation}
\begin{flalign}
\label{eq:TrivialFormulation}
 \left\{
  \begin{alignedat}{3}
		\multicolumn{1}{l}{$\text{Find } u\in \bfH^1_{\Gamma_0}(\Omega),\, \sigma\in \bfH_{\Gamma_1}(\div,\Omega),$}\hspace{-200pt} & &&\\
		\int_\Omega \sigma : \tau \dd\Omega - \int_\Omega \nabla u : \C : \tau \dd\Omega &= 0
			\,, \quad &&\text{for all }\, \tau \in \bfL^2(\Omega;\Sym) \,,\\
		-\int_\Omega \div\sigma \cdot v \dd\Omega &= \int_\Omega f\cdot v \dd\Omega \,, \quad &&\text{for all }\, v\in \bfL^2(\Omega)\,,\\
		\int_\Omega \sigma:w\dd\Omega &= 0 \,, \quad &&\text{for all }\, w\in \bfL^2(\Omega;\Skw)\,.
  \end{alignedat}
 \right. &&
\end{flalign}

\textbf{$(\scU)$ Ultraweak formulation}
\begin{flalign}
\label{eq:UWeakFormulation}
 \left\{
  \begin{alignedat}{3}
   	\multicolumn{1}{l}{$\text{Find } u\in \bfL^2(\Omega),\, \sigma\in \bfL^2(\Omega;\Sym),
   		\,\omega\in\bfL^2(\Omega;\Skw),$}\hspace{-200pt} & &&\\
   \int_\Omega\sigma:\A:\tau\dd\Omega+\int_\Omega\omega:\tau\dd\Omega+\int_\Omega u\cdot\div\tau\dd\Omega
   	&= 
   		0\,,\quad &&\text{for all }\, \tau \in \bfH_{\Gamma_1}(\div,\Omega)\,,\\
   \int_\Omega \sigma : \nabla v \dd\Omega
   	&= \int_\Omega f\cdot v \dd\Omega
   		\,, \quad &&\text{for all }\, v\in \bfH^1_{\Gamma_0}(\Omega)\,.
  \end{alignedat}
 \right. &&
\end{flalign}

\textbf{$(\scD)$ Dual Mixed formulation}
\begin{flalign}
\label{eq:Mixed1Formulation}
 \left\{
  \begin{alignedat}{3}
		\multicolumn{1}{l}{$\text{Find } u\in \bfH^1_{\Gamma_0}(\Omega),\,\sigma\in\bfL^2(\Omega;\Sym),$}\hspace{-200pt} & &&\\
   	\int_\Omega \sigma : \tau \dd\Omega - \int_\Omega \nabla u : \C : \tau \dd\Omega &= 0
   		\,, \quad &&\text{for all }\, \tau \in \bfL^2(\Omega;\Sym) \,,\\
   \int_\Omega \sigma : \nabla v \dd\Omega
   	&= \int_\Omega f\cdot v \dd\Omega
   		\,, \quad &&\text{for all }\, v\in \bfH^1_{\Gamma_0}(\Omega)\,.
  \end{alignedat}
 \right. &&
\end{flalign}

\textbf{$(\scM)$ Mixed formulation}
\begin{flalign}
\label{eq:Mixed2Formulation}
 \left\{
  \begin{alignedat}{3}
  	\multicolumn{1}{l}{$\text{Find } u\in \bfL^2(\Omega),\, \sigma\in \bfH_{\Gamma_1}(\div,\Omega),
   		\,\omega\in\bfL^2(\Omega;\Skw),$}\hspace{-200pt} & &&\\
   \int_\Omega\sigma:\A:\tau\dd\Omega+\int_\Omega\omega:\tau\dd\Omega+\int_\Omega u\cdot\div\tau\dd\Omega
   	&= 
   		0\,,\quad &&\text{for all }\, \tau \in \bfH_{\Gamma_1}(\div,\Omega)\,,\\
   	-\int_\Omega \div\sigma \cdot v \dd\Omega &= \int_\Omega f\cdot v \dd\Omega \,, \quad &&\text{for all }\, v\in \bfL^2(\Omega)\,,\\
   	\int_\Omega \sigma:w\dd\Omega &= 0 \,, \quad &&\text{for all }\, w\in \bfL^2(\Omega;\Skw)\,.
  \end{alignedat}
 \right. &&
\end{flalign}


Observe that the dual mixed formulation can also be rewritten in second order form.
Therefore, we allow for one more variational formulation, equivalent to \eqref{eq:Mixed1Formulation},

\textbf{$(\scP)$ Primal formulation}
\begin{flalign}
\label{eq:PrimalFormulation}
 \left\{
  \begin{aligned}
   \multicolumn{1}{l}{$\text{Find } u\in \bfH^1_{\Gamma_0}(\Omega),$}\hspace{-200pt} & \\
   \int_\Omega \nabla u : \C : \nabla v \dd\Omega &= \int_\Omega f\cdot v \dd\Omega
   \,, \quad \text{for all }\, v\in \bfH^1_{\Gamma_0}(\Omega)\,.
  \end{aligned}
 \right. &&
\end{flalign}

This final variational formulation is easily the most pervasive in the finite element literature.
Treated with the Bubnov-Galerkin method and conforming finite elements, it has the strong advantage of computational efficiency for it involves only a single solution variable and so the required degrees of freedom in computation are usually significantly less than in each other discretization.
This formulation produces a symmetric coercive bilinear form and so also a symmetric positive definite stiffness matrix.
Moreover, when using piecewise-linear isoparametric elements, it is well known to always reproduce infinitesimal rigid displacements exactly.
Compared to the dual mixed formulation, the primal formulation is essentially superior in all ways since even the stress tensor, $\sigma$, can be computed a posteriori from the $\bfH^1(\Omega)$ solution variable. 
For this reason, we avoid computing with the dual mixed formulation in Section \ref{sec:Numerics}.

The mixed formulation is also well known in the literature for it avoids volumetric locking in nearly incompressible scenarios (as $\lambda\to\infty$) and globally preserves the conservation law $-\int_\Omega\div\sigma\dd\Omega=\int_\Omega f \dd\Omega$ in the discrete solution.
The law holds element-wise as well as long as the indicator function of each element is in the test function space.
Neither of these traits are present in the primal formulation.
Moreover, the mixed method can also be discretized with conforming finite elements with the Bubnov-Galerkin method.
As with the primal method, this is due to the fact that the test and trial spaces are the same.
The mixed formulation can be simplified when using the symmetric space $\bfH_{\Gamma_1}(\div,\Omega;\Sym)$ for $\sigma$ and $\tau$.
However, stable finite element spaces satisfying strong symmetry in the stress variable are very difficult to produce.
Some notable treatments of these difficulties are considered in \cite{elas3dfamily,mixedelas3d,SchoeberlSinwel11,PechsteinSchoberl12}.

The strong formulation can be recast as the first order least squares finite element method (see Section \ref{sec:hybrid_dpg}).
This method is easy to implement and always produces positive definite stiffness matrices.

The lesser studied ultraweak formulation is not often used because it does not immediately admit a stable discretization due to the test and trial spaces being different.
This formulation has been the traditional setting for applying the DPG methodology.
With this methodology, we will show that the formulation indeed can admit a stable discretization. 
Amongst many advantages is that it is volumetric locking-free \cite{Bramwell12}.\footnote{
In fact, since $\|\A\|<\infty$ as $\lambda\to\infty$, all four initial formulations (but not primal) can be recast in a volumetric locking-free robustly stable form by using the compliance tensor, $\A$, instead of the stiffness tensor, $\C$.
Hence, using \eqref{eq:RobustConstitutiveLaw} one can obtain a replacement to \eqref{eq:TrivialConstitutiveLaw}.
Namely, $\int_\Omega \sigma:\A:\tau\dd\Omega-\int_\Omega\nabla u:\tau\dd\Omega=0$ for all $\tau\in\bfL^2(\Omega;\Sym)$.}


Of course, other formulations of linear elasticity are also possible such as those derived from the Hu-Washizu principle \cite{Oden76}.
Notably, volumetric locking can also be avoided by introducing a pressure term, but at the cost making traction (normal stress) boundary conditions more difficult to handle \cite{HughesFamous}.



\subsection{Well-posedness} 
\label{sec:wellposedconforming}

One of the main results is stated in the next theorem, whose proof is relegated to Appendix \ref{app:WellPosedness}.

\begin{theorem}
\label{thm:mutuallywellposed}
The variational formulations $(\scS)$, $(\scU)$, $(\scD)$, $(\scM)$ and $(\scP)$ are mutually ill or well-posed.
That is, if any single formulation is well-posed, then all others are also well-posed.
\end{theorem}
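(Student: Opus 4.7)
The plan is to prove the theorem by establishing a chain of equivalences $(\scP)\Leftrightarrow(\scD)\Leftrightarrow(\scS)\Leftrightarrow(\scM)\Leftrightarrow(\scU)$, where each link asserts that the two adjacent formulations are simultaneously well- or ill-posed. For each neighboring pair I would exhibit an explicit continuous bijection (with continuous inverse) between their solution manifolds. In operator language, this amounts to showing that the bilinear-form operators are conjugate through topological isomorphisms of the trial and test spaces, so that bounded invertibility transfers from one to the other.

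The equivalence $(\scP)\Leftrightarrow(\scD)$ is purely algebraic: the first equation of $(\scD)$, tested against all $\tau\in\bfL^2(\Omega;\Sym)$, is pointwise equivalent to $\sigma=\C:\varepsilon(u)$, so substituting this identification into the second equation of $(\scD)$ yields exactly $(\scP)$; conversely, given a solution $u$ of $(\scP)$, the pair $(u,\C:\varepsilon(u))$ solves $(\scD)$ and depends continuously on $u$.

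For $(\scD)\Leftrightarrow(\scS)$, the step is integration by parts combined with a density argument. Starting from $(\scS)$, the weak symmetry equation immediately forces $\sigma\in\bfL^2(\Omega;\Sym)$, and restricting the $\bfL^2$-tested momentum equation to $v\in\bfH^1_{\Gamma_0}(\Omega)$ followed by Green's formula (using $\sigma\fkn|_{\Gamma_1}=0$ and $v|_{\Gamma_0}=0$) produces the momentum equation of $(\scD)$. In the reverse direction, testing $(\scD)$ with $v\in C^\infty_c(\Omega)$ recovers $-\div\sigma=f$ in $\bfL^2$, so $\sigma\in\bfH(\div,\Omega)$, while integration by parts against arbitrary $v\in\bfH^1_{\Gamma_0}(\Omega)$ yields the boundary condition $\sigma\fkn|_{\Gamma_1}=0$ in the appropriate normal-trace sense. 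Essentially the same manipulations deliver $(\scS)\Leftrightarrow(\scM)$ and $(\scM)\Leftrightarrow(\scU)$: the transition into $(\scM)$ trades strong symmetry of $\sigma$ for weak symmetry via the auxiliary rotation $\omega$ and the relation \eqref{eq:RobustConstitutiveLawII}, while the passage to $(\scU)$ shifts the $\div$ and $\nabla$ operators from the trial side to the test side.

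The main obstacle is this last step into the ultraweak setting $(\scU)$, where the trial variables have only $L^2$ regularity a priori. Given an ultraweak solution $(u,\sigma,\omega)$, recovering the regularity required by the other formulations requires carefully unpacking each variational equation against smooth, compactly supported test functions to read off, distributionally, the pointwise constitutive identity $\A:\sigma=\varepsilon(u)$ together with $\omega$ identified as the skew part of $\nabla u$ (which in particular forces $u\in\bfH^1(\Omega)$), the momentum balance $-\div\sigma=f$, and the boundary conditions on $\Gamma_0$ and $\Gamma_1$. Each bijection also has to be shown continuous in both directions, so that well-posedness constants (not expected to be uniform across formulations) transfer correctly through the chain.
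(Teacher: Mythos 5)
Your overall strategy --- a chain of simultaneous-well-posedness equivalences connected by integration by parts and re-identification of variables --- points in the same direction as the paper's proof, but as stated it has a genuine gap: the claim that ``essentially the same manipulations'' carry you from $(\scD)\Leftrightarrow(\scS)$ to $(\scS)\Leftrightarrow(\scM)$ and $(\scM)\Leftrightarrow(\scU)$ is not correct, and that is exactly where the hard analysis lives.

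The subtle point is that well-posedness is not a statement about solution sets being in bijection; it is the quantitative statement (via Babu\v{s}ka--Ne\v{c}as) that the inf-sup constant is bounded away from zero \emph{and} that the compatibility condition $\ell|_{Y_{00}}=0$ holds. Your plan of ``unpacking each variational equation against smooth, compactly supported test functions'' will indeed recover the distributional PDE and even the improved regularity $u\in\bfH^1_{\Gamma_0}(\Omega)$, $\sigma\in\bfH_{\Gamma_1}(\div,\Omega)$. But it does not by itself give the needed estimate $\|u\|_{\bfH^1}+\|\sigma\|_{\bfH(\div)}\lesssim\|\ell\|$ in the \emph{stronger} trial norm of $(\scS)$ starting from the $L^2$ control you have in $(\scU)$; this is a gain of regularity with a uniform constant, and that requires a separate inequality. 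The paper supplies two nontrivial lemmas precisely at this point: a Poincar\'{e}--Korn-type inequality $\|u\|_{\bfL^2}+\|\omega\|_{\bfL^2}\lesssim\|-\nabla u+\omega\|_{\bfL^2}$ proved by a compactness argument (Rellich--Kondrachov combined with Korn), and the Brezzi-type inf-sup bound
\begin{equation*}
\|u\|_{\bfL^2}+\|\omega\|_{\bfL^2}\;\lesssim\;\sup_{\tau\in\bfH_{\Gamma_1}(\div,\Omega)\setminus\{0\}}\frac{|(u,\div\tau)_\Omega+(\omega,\tau)_\Omega|}{\|\tau\|_{\bfH(\div,\Omega)}}
\end{equation*}
obtained from the first one via the closed range theorem. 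The paper also needs a closed-range/adjoint argument to transfer the inf-sup constant between the symmetric strong and ultraweak formulations, after routing through auxiliary formulations $(\scS_\Sym)$, $(\scU_\Sym)$, $(\scM_\Sym)$ posed over $\bfH_{\Gamma_1}(\div,\Omega;\Sym)$. None of these steps follow from integration by parts and density alone.

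Two further omissions: first, you need to verify the compatibility condition $\ell|_{Y_{00}}=0$ for each formulation, since the test spaces (and hence $Y_{00}$) differ; the paper does this by first showing that well-posedness of any one formulation forces $\Gamma_0\neq\varnothing$, and then that $Y_{00}=\{0\}$ for every formulation once $\Gamma_0\neq\varnothing$. Second, your $(\scP)\Leftrightarrow(\scD)$ step is fine in the forward direction (algebraic elimination of $\sigma$), but the reverse ($(\scP)\Rightarrow(\scD)$) already requires bounding the $\bfL^2$-norm of the auxiliary stress variable by the primal norm; this is easy in this particular case, but it is not literally ``substitution'' --- and it illustrates why every arrow in your chain must be accompanied by an explicit inf-sup estimate in the target formulation's norm, not merely a set-theoretic bijection of solutions.
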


It is well known that the primal variational formulation, $(\scP)$, is well-posed by using Korn's inequality \cite{Ciarlet13} whenever $\Gamma_0\neq\varnothing$. 
Hence, the following corollary immediately follows.

\begin{corollary}
\label{cor:allwellposed}
Let $\,\Gamma_0$ be relatively open in $\bdry\Omega$.
If $\,\Gamma_0\neq\varnothing$, then the variational formulations $(\scS)$, $(\scU)$, $(\scD)$, $(\scM)$ and $(\scP)$ are well-posed.
\end{corollary}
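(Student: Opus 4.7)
The plan is to reduce everything to the primal formulation $(\scP)$: prove $(\scP)$ is well-posed under the hypothesis $\Gamma_0\neq\varnothing$, then quote Theorem~\ref{thm:mutuallywellposed} to transfer well-posedness to $(\scS)$, $(\scU)$, $(\scD)$, and $(\scM)$ in one shot. Since the theorem has already done the work of relating the five formulations, the corollary collapses to a single conforming coercivity argument on one Hilbert space.

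For $(\scP)$, trial and test spaces coincide as $\bfH^1_{\Gamma_0}(\Omega)$, so I would apply the Lax--Milgram lemma to the symmetric bilinear form $b(u,v)=\int_\Omega \nabla u : \C : \nabla v \dd\Omega$. Since $\C$ annihilates skew-symmetric arguments (the same reduction used in the derivation of $(\scS)$), this equals $\int_\Omega \varepsilon(u) : \C : \varepsilon(v)\dd\Omega$; continuity then follows immediately from Cauchy--Schwarz and the boundedness of $\C$ as a linear map on $\Sym$, and the load functional $v\mapsto\int_\Omega f\cdot v\dd\Omega$ is continuous on $\bfH^1_{\Gamma_0}(\Omega)$ whenever $f\in\bfL^2(\Omega)$.

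The heart of the argument, and the only step that genuinely uses the hypothesis $\Gamma_0\neq\varnothing$, is coercivity. For admissible isotropic Lam\'e parameters ($\mu>0$ and $3\lambda+2\mu>0$) the tensor $\C$ is positive definite on $\Sym$, giving $b(u,u)\geq c_\C\|\varepsilon(u)\|_{\bfL^2(\Omega)}^2$. To upgrade this symmetric-gradient bound to a full $\bfH^1$ bound I would invoke Korn's second inequality with mixed boundary conditions: because $\Gamma_0$ is relatively open and non-empty in the smooth boundary of the simply connected (hence connected) domain $\Omega$, it has positive surface measure and admits no trace of a non-zero infinitesimal rigid displacement, so the kernel of $\varepsilon$ on $\bfH^1_{\Gamma_0}(\Omega)$ is trivial and Korn delivers $\|u\|_{\bfH^1(\Omega)}\leq c_K\|\varepsilon(u)\|_{\bfL^2(\Omega)}$. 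Composing the two estimates yields coercivity with constant $c_\C/c_K^2$, Lax--Milgram closes $(\scP)$, and Theorem~\ref{thm:mutuallywellposed} delivers the other four formulations at once.

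The only real subtlety is citing the correct variant of Korn's inequality and verifying its hypotheses in the present geometric setting; since $\Omega$ is assumed simply connected with smooth boundary and $\Gamma_0$ is relatively open and non-empty, this is standard (and the reference \cite{Ciarlet13} cited just above the statement suffices).
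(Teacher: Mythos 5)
Your proposal is correct and follows essentially the same route as the paper: establish well-posedness of $(\scP)$ via coercivity of the bilinear form (Korn's inequality with $\Gamma_0\neq\varnothing$, positive-definiteness of $\C$ on $\Sym$, and Lax--Milgram), then invoke Theorem~\ref{thm:mutuallywellposed} to transfer well-posedness to the remaining four formulations. The paper states this tersely — noting that $(\scP)$ is ``well known'' to be well-posed by Korn's inequality and that the corollary ``immediately follows'' — and your write-up simply supplies the standard details the paper left implicit.
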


\section{Variational formulations with broken test spaces}
\label{sec:BrokenSpaces}

As we will see later, to compute optimal test functions it is necessary to invert the test space Riesz operator which can be an expensive procedure when the test spaces are globally conforming.
However, if the test spaces are broken with respect to a mesh, this inversion becomes a local procedure which can be completed independently with respect to each element.
Moving to broken tests spaces in a variational formulation comes at the cost of introducing new interface variables along the skeleton of the mesh and therefore involves more unknowns. It can be considered as a way of embedding the original formulation into a larger one.
Consistently, this results in well-posed ``broken'' variational formulations whose solutions correspond to the solutions of the original formulations in a way made precise by Theorem~\ref{thm:BrokenWellposedness}.

The majority of the material in this section is developed in greater detail in \cite{Carstensen15}.
Here, we repeat some relevant results from this larger theory which will be necessary for our treatment of linear elasticity.

\subsection{Broken energy spaces} 
\label{sec:broken_test_spaces}

We now assume that the domain $\Omega$ is partitioned into a mesh of elements, $\mesh$, and we assume that each element in the mesh, $K\in\mesh$, has a Lipschitz continuous boundary, $\bdry K$, like all polytopal elements do.

A broken energy space is a mesh dependent test space having no continuity constraints across mesh element interfaces.
The ones we will be most interested in are defined as
\begin{equation}
	\begin{aligned}
		\bfL^2(\mesh)&=\{u\in\bfL^2(\Omega)\mid \forall K\in\mesh,u|_K\in\bfL^2(K)\}=\bfL^2(\Omega)\,,\\
		\bfH^1(\mesh)&=\{u\in\bfL^2(\Omega)\mid \forall K\in\mesh,u|_K\in\bfH^1(K)\}\,,\\
		\bfH(\div,\mesh)&=\{\sigma\in\bfL^2(\Omega;\Mat)\mid \forall K\in\mesh,\sigma|_K\in\bfH(\div,K)\}\,,
	\end{aligned}
\end{equation}
and their respective norms are defined naturally as
\begin{equation}
\label{eq:BrokenNorms}
	\|u\|_{\bfL^2(\mesh)} \!=\! \|u\|_{\bfL^2(\Omega)}\,,\quad
	\|u\|_{\bfH^1(\mesh)}^2 \!=\! \sum_{K\in\mesh} \|u|_K\|_{\bfH^1(K)}^2\,,\quad
	\|\sigma\|_{\bfH(\div,\mesh)}^2 \!=\! \sum_{K\in\mesh} \|\sigma|_K\|_{\bfH(\div,K)}^2\,.
\end{equation}
Similar definitions hold for $\bfL^2(\mesh;\mathbb{U})=\bfL^2(\Omega;\mathbb{U})$ for each $\mathbb{U}\subseteq\Mat$ that we have previously considered.
Moreover, we use the notation
\begin{equation}
	(\cdot,\cdot)_{\mesh}=\sum_{K\in\mesh}(\cdot,\cdot)_{K}\,,
\end{equation}
where for any $K\subseteq\Omega$, $(\cdot,\cdot)_K$ is either $(u,v)_K=\int_K u\cdot v\dd K$ if $u,v\in\bfL^2(K)$, or $(\sigma,\tau)_K=\int_K \sigma:\tau\dd K$ if $\sigma,\tau\in\bfL^2(K;\Mat)$.

Note that these broken spaces are essentially copies of the underlying energy space at the local element level.
As such, it is easier to construct a discretization for them than their ``unbroken'' counterparts because the requirement for global conformity of the basis functions at the interelement boundaries has been removed.

\begin{remark}
One can easily see that the broken energy norms are legitimate due to the fact that the underlying norm is \textit{localizable}.
That is, it is a map dependent on some open subset $K\subseteq\Omega$, which is a well-defined norm for any $K\subseteq\Omega$.
Indeed, $\|\cdot\|_{\bfH^1(K)}$, $\|\cdot\|_{\bfH(\div,K)}$ and $\|\cdot\|_{\bfL^2(K)}$ are norms for any $K\subseteq\Omega$.
An important limitation to this construction is the $\bfH^1(K)$ seminorm on the space $\bfH^1_{\Gamma_0}(\Omega)$ for $\Gamma_0\neq\varnothing$, which cannot be extended in the same way.
In fact, $|\cdot|_{\bfH^1(K)}=\int_K|\nabla(\cdot)|^2\dd K$ is a norm for $K=\Omega$, but is not a norm if $K\subseteq\Omega$ does not share part of its boundary with $\Gamma_0$ (i.e. $\bdry K\cap\Gamma_0=\varnothing$).
Of course, one is free to choose problem dependent norms for the test spaces (such as graph norms), and it can be extremely advantageous to do so, but for simplicity, we do not motivate any exotic norms in this work.
\end{remark}

\subsection{Interface spaces} 
\label{sec:interface_spaces}

The interface variables to be introduced lie in interface spaces induced by well-known surjective local element trace operators defined as
\begin{equation}
\label{eq:ElementTrace}
	\begin{alignedat}{5}
		\tr_{\grad}^K&:\bfH^1(K)\to\bfH^{\frac{1}{2}}(\bdry K)\,,&&\qquad&\tr_{\grad}^K u&&&=u|_{\bdry K}\,,\\
		\tr_{\div}^K&:\bfH(\div,K)\to\bfH^{-\frac{1}{2}}(\bdry K)\,,&&\qquad&\tr_{\div}^K \sigma&&&=\sigma|_{\bdry K}\cdot \fkn_{\bdry K}\,.
	\end{alignedat}
\end{equation}
Here, $\fkn_{\bdry K}$ denotes the unit outward normal on $\bdry K$ and the contraction $\sigma|_{\bdry K}\cdot \fkn_{\bdry K}$ is considered along the second index (i.e., row-wise).
The local trace operators are continuous and the spaces $\bfH^{\frac{1}{2}}(\bdry K)$ and $\bfH^{-\frac{1}{2}}(\bdry K)$ are (topologically) dual to each other when they are suited with minimum energy extension norms.

The next step is to determine the mesh trace operators, which are defined as
\begin{equation}
	\begin{alignedat}{5}
		\tr_{\grad}&:\bfH^1(\mesh)\to\prod_{K\in\mesh}\bfH^{\frac{1}{2}}(\bdry K)\,,&&\qquad
			&\tr_{\grad} u&&&=\prod_{K\in\mesh}\tr_{\grad}^K u\,,\\
		\tr_{\div}&:\bfH(\div,\mesh)\to\prod_{K\in\mesh}\bfH^{-\frac{1}{2}}(\bdry K)\,,&&\qquad
			&\tr_{\div}\sigma&&&=\prod_{K\in\mesh}\tr_{\div}^K \sigma\,.
	\end{alignedat}
\end{equation}
From these, we inherit the relevant interface spaces,
\begin{equation}
	\label{eq:InterfaceSpaces}
	\begin{alignedat}{5}
		\bfH^{\frac{1}{2}}(\bdry\mesh)&=\tr_{\grad}(\bfH^1(\Omega))\,,&&\qquad
			&\bfH^{\frac{1}{2}}_{\Gamma_0}(\bdry\mesh)&&&=\tr_{\grad}(\bfH^1_{\Gamma_0}(\Omega))\subseteq\bfH^{\frac{1}{2}}(\bdry\mesh)\,,\\
		\bfH^{-\frac{1}{2}}(\bdry\mesh)&=\tr_{\div}(\bfH(\div,\Omega))\,,&&\qquad
			&\bfH^{-\frac{1}{2}}_{\Gamma_1}(\bdry\mesh)&&&=\tr_{\div}(\bfH_{\Gamma_1}(\div,\Omega))\subseteq\bfH^{-\frac{1}{2}}(\bdry\mesh)\,,
	\end{alignedat}
\end{equation}
which are endowed with the minimum energy extension norms of $\bfH^1(\Omega)$ and $\bfH(\div,\Omega)$ respectively. 
In \cite{Carstensen15} these norms are importantly shown to be equal to
\begin{equation}
	\label{eq:InterfaceNorms}
	\begin{aligned}
		\|\hat{u}\|_{\bfH^{\frac{1}{2}}(\bdry\mesh)}
			&=\sup_{\sigma\in\bfH(\div,\mesh)\setminus\{0\}}\widefrac[-12pt]{|\langle\hat{u},\tr_{\div}\sigma\rangle_{\bdry\mesh}|}
				{\qquad\quad\|\sigma\|_{\bfH(\div,\mesh)}}\quad\,,\\
		\|\hat{\sigma}_\fkn\|_{\bfH^{-\frac{1}{2}}(\bdry\mesh)}
			&=\sup_{u\in\bfH^1(\mesh)\setminus\{0\}}\widefrac[0pt]{|\langle\hat{\sigma}_\fkn,\tr_{\grad}u\rangle_{\bdry\mesh}|}
				{\qquad\|u\|_{\bfH^1(\mesh)}}\,,
	\end{aligned}
\end{equation}
for all $\hat{u}\in\bfH^{\frac{1}{2}}(\bdry\mesh)$ and $\hat{\sigma}_\fkn\in\bfH^{-\frac{1}{2}}(\bdry\mesh)$.
Here,
\begin{equation}
	\langle\cdot,\cdot\rangle_{\bdry\mesh}=\sum_{K\in\mesh}\langle\cdot,\cdot\rangle_{\bdry K}\,,
	\label{eq:meshinnerproduct}
\end{equation}
with $\langle\cdot,\cdot\rangle_{\bdry K}$ being the duality pairing $\langle\cdot,\cdot\rangle_{\bfH^{\frac{1}{2}}(\bdry K)\times\bfH^{-\frac{1}{2}}(\bdry K)}$ or $\langle\cdot,\cdot\rangle_{\bfH^{-\frac{1}{2}}(\bdry K)\times\bfH^{\frac{1}{2}}(\bdry K)}$ depending upon the context.

Notice that $\bfH^{\frac{1}{2}}(\bdry\mesh)=\tr_{\grad}(\bfH^1(\Omega))\subsetneq\tr_{\grad}(\bfH^1(\mesh))$.
Indeed, elements in $\tr_{\grad}(\bfH^1(\mesh))$ intuitively may have different values at the two sides of the inner facets of the mesh.
Similar assertions hold for $\bfH^{-\frac{1}{2}}(\bdry\mesh)\subsetneq\tr_{\div}(\bfH(\div,\mesh))$.
These observations are confirmed with aid of the following remark and lemma.

\begin{remark}
Let $\mathcal{G}$ be the set containing all the unrepeated facets of the elements of the mesh.
Facets can be in the interior of the mesh in which case they are shared by two elements alone, say $K^+$ and $K^-$, inducing opposite normal vectors $\fkn^+$ and $\fkn^-=-\fkn^+$, or they can be on the exterior in which case they are part of a single element $K^+$ and have a unique outward normal $\fkn^+$.
For each facet $F\in\mathcal{G}$ a normal is selected and fixed. Thus, for interior facets a normal is chosen between  $\fkn^+$ and $\fkn^-$, say $\fkn^+$ is always chosen, while for exterior facets we can only choose $\fkn^+$.
For any piecewise \textit{smooth} $v\in\bfH^1(\mesh)$ and $\tau\in\bfH(\div,\mesh)$, define $v^\pm=v|_{K^\pm}$ and $\tau^\pm=\tau|_{K^\pm}$ with $v^-=0$ and $\tau^-=0$ whenever the facet is on the boundary.
Then, the facet traces are $\tr_{\grad}^{F}v=v^+|_{F}$ and $\tr_{\div}^{F}\tau=\tau^+|_{F}\cdot \fkn^+$, while the facet jumps are $\jump{\tr_{\grad}^{F}v}=v^+|_{F}-v^-|_{F}$ and $\jump{\tr_{\div}^{F}\tau}=(\tau^+|_{F}-\tau^-|_{F})\cdot \fkn^+$.
With these conventions, observe that
%
%
%
\begin{equation}
	\begin{aligned}
  	\langle\tr_{\div}\tau,\tr_{\grad}v\rangle_{\bdry\mesh}
  		&=\sum_{F\in\mathcal{G}}\langle\tr_{\div}^{F}\tau,\jump{\tr_{\grad}^{F}v}\rangle_{F}\,,
  			\quad\text{for \textit{smooth}}\quad \tau\in\bfH(\div,\Omega),\,v\in\bfH^1(\mesh)\,,\\
		\langle\tr_{\grad}v,\tr_{\div}\tau\rangle_{\bdry\mesh}
  		&=\sum_{F\in\mathcal{G}}\langle\tr_{\grad}^{F}v,\jump{\tr_{\div}^{F}\tau}\rangle_{F}\,,
  			\quad\text{for \textit{smooth}}\quad v\in\bfH^1(\Omega),\,\tau\in\bfH(\div,\mesh)\,,
  \end{aligned}
  \label{eq:smoothsumofjumps}
\end{equation}
where $\langle\cdot,\cdot\rangle_{F}$ is the $\bfL^2$ inner product on the face $F$.
Note that this inner product is well-defined for \textit{smooth} functions, but does not generalize to arbitrary elements in $\bfH^1(\Omega)$, $\bfH^1(\mesh)$, $\bfH(\div,\Omega)$ and $\bfH(\div,\mesh)$ because $\langle\cdot,\cdot\rangle_{F}$ will \textit{not} extend to a well-defined duality pairing.
However, the expression in \eqref{eq:meshinnerproduct} always holds because the duality pairings are well-defined on the full boundaries of the elements as opposed to just a particular facet in the mesh boundary.
In this context, \eqref{eq:smoothsumofjumps} suggests that $\langle\cdot,\cdot\rangle_{\bdry\mesh}$ can be interpreted as the sum against all jumps across element interfaces.
One would expect that if $\langle\cdot,\cdot\rangle_{\bdry\mesh}$ vanishes for a given broken trial function and many test functions, then all jumps are zero and the trial function is single-valued and lies in the underlying unbroken space.
Indeed, this is the content of the following lemma, which is proved in Appendix \ref{app:zerojump}.
\end{remark}

\begin{lemma}
\label{lem:CharacterizationOfTraces}
Let $\,\Gamma_0$ and $\,\Gamma_1$ be relatively open subsets in $\bdry\Omega$ satisfying $\overline{\Gamma_0\cup\Gamma_1}=\bdry\Omega$ and $\Gamma_0\cap\Gamma_1=\varnothing$.
\begin{enumerate}[font=\upshape,label={(\roman*)},ref={\thelemma(\roman*)}]
	\item Let $v\in\bfH^1(\mesh)$. Then $v\in\bfH^1_{\Gamma_0}(\Omega)$ if and only if $\langle\hat{\tau}_\fkn,\tr_{\grad}v\rangle_{\bdry\mesh}=0$ for all $\hat{\tau}_\fkn\in\bfH^{-\frac{1}{2}}_{\Gamma_1}(\bdry\mesh)$. \label{lem:H1Subspace}
	\item Let $\tau\in\bfH(\div,\mesh)$. Then $\tau\in\bfH_{\Gamma_1}(\div,\Omega)$ if and only if $\langle\hat{u},\tr_{\div}\tau\rangle_{\bdry\mesh}=0$ for all $\hat{u}\in\bfH^{\frac{1}{2}}_{\Gamma_0}(\bdry\mesh)$. \label{lem:HdivSubspace}
\end{enumerate}
%
\end{lemma}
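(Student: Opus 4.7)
The plan is to establish part (i) in detail; part (ii) then follows by the mirror-image argument obtained by swapping $(\tr_{\grad},\bfH^1,\Gamma_0)$ with $(\tr_{\div},\bfH(\div),\Gamma_1)$. The central tool is the element-wise Green's identity on each Lipschitz $K\in\mesh$,
$$
\langle \tr_{\div}^K \tau,\,\tr_{\grad}^K v\rangle_{\bdry K}\eq(\div\tau,v)_K+(\tau,\nabla v)_K\,,
$$
valid for $v\in\bfH^1(K)$ and $\tau\in\bfH(\div,K)$. Summed over $K\in\mesh$, this converts the skeleton pairing $\langle\cdot,\cdot\rangle_{\bdry\mesh}$ into volume integrals of the divergence and the (broken) gradient, which is what lets us transfer information between the interface and the interior of $\Omega$.

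For the forward direction, assume $v\in\bfH^1_{\Gamma_0}(\Omega)$ and write $\hat{\tau}_\fkn=\tr_{\div}\tau$ for some $\tau\in\bfH_{\Gamma_1}(\div,\Omega)$. Because $v$ is globally $\bfH^1$, the broken and weak gradients coincide, and summing the element-wise identity produces exactly the global Green's identity
$$
\langle\tr_{\div}\tau,\tr_{\grad}v\rangle_{\bdry\mesh}\eq(\div\tau,v)_\Omega+(\tau,\nabla v)_\Omega\eq\langle\tr_{\div}\tau,\,v|_{\bdry\Omega}\rangle_{\bdry\Omega}\,,
$$
whose $\Gamma_0$ and $\Gamma_1$ contributions vanish respectively from $v|_{\Gamma_0}=0$ and $\tau\cdot\fkn|_{\Gamma_1}=0$.

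For the converse, assume the pairing vanishes for every $\hat{\tau}_\fkn\in\bfH^{-\frac{1}{2}}_{\Gamma_1}(\bdry\mesh)$. I would proceed in two steps. First, test against arbitrary $\tau\in C_c^\infty(\Omega;\Mat)$; such $\tau$ have zero boundary normal trace and therefore lie in $\bfH_{\Gamma_1}(\div,\Omega)$, so the hypothesis combined with the summed element-wise identity yields $(\div\tau,v)_\Omega+(\tau,\nabla v)_\mesh=0$. This identifies the distributional gradient of $v$ with the $\bfL^2$ broken gradient, proving $v\in\bfH^1(\Omega)$. Second, now that the broken gradient equals the weak one, apply the element-wise identity to a general $\tau\in\bfH_{\Gamma_1}(\div,\Omega)$ and compare with the global Green's identity; the resulting pairing on $\bdry\Omega$ reduces to $\langle\tr_{\div}\tau|_{\Gamma_0},\,v|_{\Gamma_0}\rangle_{\Gamma_0}=0$, since the $\Gamma_1$ contribution is killed by $\tau\cdot\fkn|_{\Gamma_1}=0$.

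The main obstacle is this last inference, namely turning the pairing identity into $v|_{\Gamma_0}=0$. It requires surjectivity of the map $\tau\mapsto \tr_{\div}\tau|_{\Gamma_0}$ from $\bfH_{\Gamma_1}(\div,\Omega)$ onto the trace space dual to $\bfH^{\frac{1}{2}}(\Gamma_0)$. This is classical for a Lipschitz partition of $\bdry\Omega$, but demands some care near the interface $\overline{\Gamma_0}\cap\overline{\Gamma_1}$; the usual construction lifts a smooth datum on $\Gamma_0$ into $\bfH(\div,\Omega)$ via a cutoff supported away from $\Gamma_1$. All remaining ingredients are routine density and integration-by-parts arguments. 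Part (ii) follows by exchanging the roles of gradient and divergence traces and of $\Gamma_0$ and $\Gamma_1$: first show $\tau\in\bfH(\div,\Omega)$ by testing against $C_c^\infty(\Omega;\R^3)$ displacements (forcing the jumps of $\tau$ to vanish), then recover $\tau\cdot\fkn|_{\Gamma_1}=0$ by the analogous surjectivity of the Dirichlet trace on $\Gamma_1$ from $\bfH^1_{\Gamma_0}(\Omega)$.
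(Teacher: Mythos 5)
Your proposal is correct and follows essentially the same path as the paper's Appendix~A argument: the forward direction sums the element-wise Green's identity and cancels the $\Gamma_0$ and $\Gamma_1$ boundary contributions, while the converse first tests against compactly supported smooth fields to identify the broken gradient with the distributional one (giving $v\in\bfH^1(\Omega)$), and then recovers $v|_{\Gamma_0}=0$ by lifting boundary data supported away from $\Gamma_1$ into $\bfH_{\Gamma_1}(\div,\Omega)$. One small refinement worth noting: you frame the final step as requiring full surjectivity of $\tau\mapsto\tr_{\div}\tau|_{\Gamma_0}$ onto the dual of $\bfH^{1/2}(\Gamma_0)$, but the paper (and your own suggested cutoff construction) only uses the weaker fact that smooth $\phi$ compactly supported in $\Gamma_0$ extend to elements of $\bfH_{\Gamma_1}(\div,\Omega)$ with $\tr_{\div}\tilde{\phi}|_{\Gamma_0}=\phi$, which together with density is enough to conclude $\tr_{\grad}^\Omega v|_{\Gamma_0}=0$ and avoids the delicate analysis near $\overline{\Gamma_0}\cap\overline{\Gamma_1}$ that a genuine surjectivity statement would demand.
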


\subsection{Broken variational formulations}
\label{sec:BrokenVariationalFormulations}

Variational formulations on broken test spaces can be derived from accumulating all of the contributions coming from element-wise integration across the mesh.
Throughout this section, we assume homogeneous displacement and traction boundary conditions, $u_0=0$ on $\Gamma_0$ and $g=0$ on $\Gamma_1$.

We proceed as in Section \ref{sec:variationalequations}.
Formally integrating over each element instead of the whole domain, we obtain each of the first order equations in \eqref{eq:LinElastSystem} in unrelaxed and relaxed variational forms with similar modifications to avoid discretizing the space $\bfH(\div,K;\Sym)$.

Choosing to avoid integration by parts, we can express the equations of linear elasticity as
\begin{flalign*}
 \left\{
  \begin{alignedat}{3}
   	\multicolumn{1}{l}{$\text{Find } u\in \bfH^1_{\Gamma_0}(\Omega),\,\sigma\in \bfH_{\Gamma_1}(\div,\Omega),
   		\text{ such that for each } K\in\mesh,$} \hspace{-300pt}& &&\\
   	(\sigma,\tau)_K-(\C:\nabla u ,\tau)_K&=0\,,
   		\quad &&\text{for all }\,\tau\in\bfL^2(K;\Sym) \,,\\
   	-(\div\sigma,v)_K&=(f,v)_K \,,
   		\quad &&\text{for all }\, v\in \bfL^2(K)\,,\\
   	(\sigma,w)_K&=0\,,
   		\quad &&\text{for all }\,w\in\bfL^2(K;\Skw) \,.
  \end{alignedat}
 \right. &&
\end{flalign*}
Therefore, immediately accumulating all of the single element contributions yields the first broken formulation. 

\textbf{$(\scS_\mesh)$ Strong formulation}
\begin{flalign}
\label{eq:TrivialBrokenFormulation}
 \left\{
  \begin{alignedat}{3}
  	\multicolumn{1}{l}{$\text{Find } u\in\bfH^1_{\Gamma_0}(\Omega),\,\sigma\in \bfH_{\Gamma_1}(\div,\Omega),$}\hspace{-300pt}& &&\\
   	(\sigma,\tau)_\mesh-(\C:\nabla u,\tau)_\mesh&=0\,,
   		\quad&&\text{for all }\,\tau\in\bfL^2(\mesh;\Sym)\!=\!\bfL^2(\Omega;\Sym) \,,\\
   	-(\div\sigma,v)_\mesh&=(f,v)_\mesh\,,
   		\quad&&\text{for all }\,v\in\bfL^2(\mesh)\!=\!\bfL^2(\Omega)\,,\\
   	(\sigma,w)_\mesh&=0\,,
   		\quad&&\text{for all }\,w\in\bfL^2(\mesh;\Skw)\!=\!\bfL^2(\Omega;\Skw)\,.
  \end{alignedat}
 \right. &&
\end{flalign}
Observe that this formulation is equivalent to the original strong formulation in \eqref{eq:TrivialFormulation}.


However, to obtain a broken ultraweak variational formulation analogous to \eqref{eq:UWeakFormulation}, a more elaborate analysis is required. 
Choosing to integrate by parts both of the equations at an element level, we obtain expressions akin to \eqref{eq:PplVirtualWork} and \eqref{eq:RelaxedConstitutiveLaw}, which our solution variable ostensibly satisfies,
\begin{flalign}
\label{eq:FormalUltraWeak}
 \left\{
  \begin{alignedat}{3}
  	(\A:\sigma,\tau)_\mesh\!+\!(\omega,\tau)_\mesh\!+\!(u,\div\tau)_\mesh\!-\!
  		\langle\tr_{\grad}u,\tr_{\div}\tau\rangle_{\bdry\mesh}&\!=\!0\,,
  			\,\,\, &&\text{for all }\, \tau\in\bfH_{\Gamma_1}(\div,\mesh)\,,\\
   	(\sigma,\nabla v)_\mesh\!-\!\langle\tr_{\div}\sigma,\tr_{\grad}v\rangle_{\bdry\mesh}&\!=\!(f,v)_\mesh\,,
   		\,\,\, &&\text{for all }\,v\in\bfH^1_{\Gamma_0}(\mesh)\,.
  \end{alignedat}
 \right. &&
\end{flalign}
Here, the legitimacy of $\tr_{\grad}u$ and $\tr_{\div}\sigma$ is not yet guaranteed as we have not specified the energy spaces for the trial variables.

Previously, in \eqref{eq:TrivialBrokenFormulation}, the notation $(\cdot,\cdot)_\mesh$ was awkward and we could have easily replaced this sum of element-wise inner products with $(\cdot,\cdot)_\Omega$. 
In the new expressions in \eqref{eq:FormalUltraWeak}, we insist on the notation $(\cdot,\cdot)_\mesh$ as the divergence and gradient operations, $\div$ and $\nabla$, are only defined acting upon the broken test space within element boundaries, \textit{not} over the entire domain, $\Omega$.


For the time being, let us reconsider testing only against $v_0\in\bfH^1_{\Gamma_0}(\Omega)$ and $\tau_0\in\bfH_{\Gamma_1}(\div,\Omega)$ which come only from subsets of the broken test spaces.
In this case, the boundary terms $\langle\tr_{\grad}u,\tr_{\div}\tau\rangle_{\bdry\mesh}$ and $\langle\tr_{\div}\sigma,\tr_{\grad}v\rangle_{\bdry\mesh}$ in \eqref{eq:FormalUltraWeak} are inclined to vanish if we recall Lemma~\ref{lem:CharacterizationOfTraces}. 
If this were so, we would then actually recover the original equations of the ultraweak variational formulation in \eqref{eq:UWeakFormulation},
\begin{flalign}
\label{eq:RecoveredUltraWeak}
 \left\{
  \begin{alignedat}{3}
   	(\A:\sigma,\tau_0)_\mesh+(\omega,\tau_0)_\mesh+(u,\div\tau_0)_\mesh&=0\,,
  			\quad &&\text{for all }\, \tau_0\in\bfH_{\Gamma_1}(\div,\Omega)\,,\\
   	(\sigma,\nabla v_0)_\mesh&=(f,v_0)_\mesh\,,
   		\quad &&\text{for all }\,v_0\in\bfH^1_{\Gamma_0}(\Omega)\,,
  \end{alignedat}
 \right. &&
\end{flalign}
Observing only \eqref{eq:RecoveredUltraWeak}, we are motivated to search for the trial variables in the same spaces as in the original well-posed formulation, $u\in\bfL^2(\Omega)$, $\sigma\in\bfL^2(\Omega;\Sym)$ and $\omega\in\bfL^2(\Omega;\Skw)$.
However, with that assumption, the discarded terms $\langle\tr_{\grad}u,\tr_{\div}\tau\rangle_{\bdry\mesh}$ and $\langle\tr_{\div}\sigma,\tr_{\grad}v\rangle_{\bdry\mesh}$ from \eqref{eq:FormalUltraWeak} would not be well-defined and Lemma~\ref{lem:CharacterizationOfTraces} would not apply to them.
To deal with this complication, we introduce new \textit{interface} variables to solve a complementary problem,
\begin{flalign}
\label{eq:ComplementaryUltraWeak}
 \left\{
  \begin{alignedat}{3}
   	\multicolumn{1}{l}{$\text{Let }u_0\in \bfL^2(\Omega),\,\sigma_0\in\bfL^2(\Omega;\Sym)\text{ and }\omega_0\in\bfL^2(\Omega;\Skw)
   		\text{ be the unique solution to \eqref{eq:RecoveredUltraWeak}.}$}\hspace{-400pt}\\
   	\multicolumn{1}{l}{$\text{Find }\hat{u}\in \bfH_{\Gamma_0}^{\frac{1}{2}}(\bdry\mesh),
   		 	\,\hat{\sigma}_\fkn\in\bfH_{\Gamma_1}^{-\frac{1}{2}}(\bdry\mesh),$} \hspace{-400pt}\\
   		\langle\hat{u},\tr_{\div}\tau\rangle_{\bdry\mesh}&=(\A:\sigma_0,\tau)_\mesh+(\omega_0,\tau)_\mesh+(u_0,\div\tau)_\mesh\,,
   			\quad &&\text{for all }\,\tau\in\bfH_{\Gamma_1}(\div,\mesh)\,,\\
  		\langle\hat{\sigma}_\fkn,\tr_{\grad}v\rangle_{\bdry\mesh}&=(\sigma_0,\nabla v)_\mesh-(f,v)_\mesh\,,
  			\quad &&\text{for all }\,v\in\bfH^1_{\Gamma_0}(\mesh)\,.
  \end{alignedat}
 \right. &&
\end{flalign}

We are now left with two problems to solve, \eqref{eq:RecoveredUltraWeak} and \eqref{eq:ComplementaryUltraWeak}, but fortunately these can be expressed as one single problem posed simultaneously, which is precisely the broken ultraweak formulation.

\textbf{$(\scU_\mesh)$ Ultraweak formulation}
\begin{flalign}
\label{eq:UWeakBrokenFormulation}
 \left\{
  \begin{alignedat}{3}
   	\multicolumn{1}{l}{$\text{Find } u\in\bfL^2(\Omega),\,\sigma\in\bfL^2(\Omega;\Sym),\,\omega\in\bfL^2(\Omega;\Skw),\,
   		\hat{u}\in \bfH_{\Gamma_0}^{\frac{1}{2}}(\bdry\mesh),\,
   			\hat{\sigma}_\fkn \in \bfH_{\Gamma_1}^{-\frac{1}{2}}(\bdry\mesh),$} \hspace{-400pt}\\
   	(\A:\sigma,\tau)_\mesh\!+\!(\omega,\tau)_\mesh\!+\!(u,\div\tau)_\mesh\!-\!\langle\hat{u},\tr_{\div}\tau\rangle_{\bdry\mesh}&=0\,,
  		\quad &&\text{for all }\,\tau\in\bfH_{\Gamma_1}(\div,\mesh)\,,\\
   	(\sigma,\nabla v)_\mesh-\langle\hat{\sigma}_\fkn,\tr_{\grad}v\rangle_{\bdry\mesh}&=(f,v)_\mesh\,,
   		\quad &&\text{for all }\,v\in\bfH^1_{\Gamma_0}(\mesh)\,.
  \end{alignedat}
 \right. &&
\end{flalign}


A similar process renders the rest of the broken variational formulations.

\textbf{$(\scD_\mesh)$ Dual Mixed formulation}
\begin{flalign}
\label{eq:Mixed1BrokenFormulation}
 \left\{
  \begin{alignedat}{3}
   	\multicolumn{1}{l}{$\text{Find } u\in \bfH^1_{\Gamma_0}(\Omega),\,\sigma\in \bfL^2(\Omega;\Sym),\,
   		\hat{\sigma}_\fkn \in\bfH_{\Gamma_1}^{-\frac{1}{2}}(\bdry\mesh),$} \hspace{-400pt}\\
   	(\sigma,\tau)_\mesh-(\C:\nabla u,\tau)_\mesh&=0\,,
   		\quad&&\text{for all }\,\tau\in\bfL^2(\mesh;\Sym)\!=\!\bfL^2(\Omega;\Sym) \,,\\
   	(\sigma,\nabla v)_\mesh-\langle\hat{\sigma}_\fkn,\tr_{\grad}v\rangle_{\bdry\mesh}&=(f,v)_\mesh\,,
   		\quad &&\text{for all }\,v\in\bfH^1_{\Gamma_0}(\mesh)\,.
  \end{alignedat}
 \right. &&
\end{flalign}

\textbf{$(\scM_\mesh)$ Mixed formulation}
\begin{flalign}
\label{eq:Mixed2BrokenFormulation}
 \left\{
  \begin{alignedat}{3}
   	\multicolumn{1}{l}{$\text{Find } u\in\bfL^2(\Omega),\,\sigma\in\bfH_{\Gamma_1}(\div,\Omega),\,\omega\in\bfL^2(\Omega;\Skw),\,
   		\hat{u}\in\bfH_{\Gamma_0}^{\frac{1}{2}}(\bdry\mesh),$} \hspace{-400pt}\\
   	(\A:\sigma,\tau)_\mesh\!+\!(\omega,\tau)_\mesh\!+\!(u,\div\tau)_\mesh\!-\!\langle\hat{u},\tr_{\div}\tau\rangle_{\bdry\mesh}&=0\,,
  		\,\,&&\text{for all }\,\tau\in\bfH_{\Gamma_1}(\div,\mesh)\,,\\
   	-(\div\sigma,v)_\mesh&=(f,v)_\mesh\,,
   		\,\,&&\text{for all }\,v\in\bfL^2(\mesh)\!=\!\bfL^2(\Omega)\,,\\
   	(\sigma,w)_\mesh&=0\,,
   		\,\,&&\text{for all }\,w\in\bfL^2(\mesh;\Skw)\!=\!\bfL^2(\Omega;\Skw)\,.\!\!\!\!\!\!\!\!\!\!\!\!\!\!\!
  \end{alignedat}
 \right. &&
\end{flalign}

\textbf{$(\scP_\mesh)$ Primal formulation}
\begin{flalign}
\label{eq:PrimalBrokenFormulation}
 \left\{
  \begin{aligned}
   	\multicolumn{1}{l}{$\text{Find } u\in\bfH^1_{\Gamma_0}(\Omega),\,
   		\hat{\sigma}_\fkn\in\bfH_{\Gamma_1}^{-\frac{1}{2}}(\bdry\mesh),$} \hspace{-400pt} \\
   	(\C:\nabla u ,\nabla v )_\mesh-\langle\hat{\sigma}_\fkn,\tr_{\grad}v\rangle_{\bdry\mesh}&=(f,v)_\mesh\,,
   		\quad\text{for all }\, v\in \bfH^1_{\Gamma_0}(\mesh)\,.
  \end{aligned}
 \right. &&
\end{flalign}

The philosophy we advocate for is to consider the original problem with unbroken test spaces as being embedded into a larger problem having a larger broken test space.
This section closes by demonstrating more formally that these new broken variational formulations are indeed well-posed problems and that the solution variables $u$ and $\sigma$ do agree with the solutions of the original unbroken formulations.

\subsection{Well-posedness of broken variational formulations} 
\label{sec:WellPosedness}


To show that the formulations proposed in Section \ref{sec:BrokenVariationalFormulations} are well-posed we will make use of the following theorem.
The interested reader can inspect the proof in \cite[Theorem 3.1]{Carstensen15}.

\begin{theorem}
\label{thm:BrokenWellposedness}
Let $U_0$, $\hat{U}$ and $V$ be Hilbert spaces over a fixed field $\mathbb{F}\in\{\R,\mathbb{C}\}$.
Let $\ell:V\to\mathbb{F}$ be a continuous linear form, and let $b_0:U_0\times V \to \mathbb{F}$ and $\hat{b}:\hat{U}\times V \to \mathbb{F}$ be continuous bilinear forms if $\,\mathbb{F}=\R$ or sesquilinear forms if $\,\mathbb{F}=\mathbb{C}$. 
With $U = U_0 \times \hat{U}$ and $\|\cdot\|_{U}^2=\|\cdot\|_{U_0}^2+\|\cdot\|_{\hat{U}}^2$, define $b:U\times V \to \mathbb{F}\,$ for all $(\fku_0,\hat{\fku})\in U$ and $\fkv\in V$ by
\begin{equation*}
	b((\fku_0,\hat{\fku}),\fkv) = b_0(\fku_0,\fkv) + \hat{b}(\hat{\fku},\fkv)\,,
\end{equation*}
and let
\begin{equation*}
	V_0 = \{\fkv\in V \mid \hat{b}(\hat{\fku},\fkv)=0 \,\text{ for all }\,\hat{\fku}\in\hat{U}\}\,.
\end{equation*}
Assume:
\begin{enumerate}[font=\upshape,label={($\gamma_0$)},after=\vspace{-0.5\baselineskip}]
	\item There exists $\gamma_0>0$ such that for all $\fku_0\in U_0$,
		\begin{equation*}
			\sup_{\fkv_0\in V_0\setminus\{0\}} \frac{|b_0(\fku_0,\fkv_0)|}{\|\fkv_0\|_V}\geq\gamma_0\|\fku_0\|_{U_0}\,.
		\end{equation*}
		\label{item:Assumption1}
\end{enumerate}
\begin{enumerate}[font=\upshape,label={($\hat{\gamma}$)},after=\vspace{-0.5\baselineskip}]
	\item There exists $\hat{\gamma}>0$ such that for all $\hat{\fku}\in \hat{U}$,
		\begin{equation*}
			\sup_{\fkv\in V\setminus\{0\}} \frac{|\hat{b}(\hat{\fku},\fkv)|}{\|\fkv\|_V}\geq\hat{\gamma}\|\hat{\fku}\|_{\hat{U}}\,.
		\end{equation*}
		\label{item:Assumption2}
\end{enumerate}
Then:
\begin{enumerate}[font=\upshape,label={($\gamma$)},after=\vspace{-0.0\baselineskip}]
	\item There exists $\gamma=(\frac{1}{\gamma_0^2}+\frac{1}{\hat{\gamma}^2}(\frac{M_0}{\gamma_0}+1))^{-\frac{1}{2}}>0$ such that for all $(\fku_0,\hat{\fku})\in U$,
		\begin{equation*}
			\sup_{\fkv\in V\setminus\{0\}} \frac{|b((\fku_0,\hat{\fku}),\fkv)|}{\|\fkv\|_V}\geq\gamma\|(\fku_0,\hat{\fku})\|_{U}\,,
		\end{equation*}
		\label{item:TotalInfSup}
		where $M_0\geq\|b_0\|=\sup_{(\fku_0,\fkv)\in U_0\times V\setminus\{(0,0)\}}\frac{|b_0(\fku_0,\fkv)|}{\|\fku_0\|_{U_0}\|\fkv\|_V}$.
\end{enumerate}
Moreover, if $\ell$ satisfies the compatibility condition,
\begin{equation*}
	\ell(\fkv)=0\,\text{ for all }\,\fkv\in V_{00}\,,
\end{equation*}
where
\begin{equation*}
	V_{00} = \{\fkv_0\in V_0 \mid b_0(\fku_0,\fkv_0)=0\,\text{ for all }\,\fku_0\in U_0\}\,,
\end{equation*}
which is always true if $V_{00}=\{0\}$, then the problem
\begin{flalign*}
\label{eq:AbstractBrokenFormulation}
\qquad
 \left\{
  \begin{aligned}
   &\text{Find }\,(\fku_0,\hat{\fku})\in U,\\
   & b((\fku_0,\hat{\fku}),\fkv) = \ell(\fkv)\,, \quad \text{for all } \,\fkv\in V\,,
  \end{aligned}
 \right. &&
\end{flalign*}
has a unique solution $(\fku_0,\hat{\fku})$ satisfying the estimate
\begin{equation*}
	\|(\fku_0,\hat{\fku})\|_{U}\leq\frac{1}{\gamma}\|\ell\|_{V'}\,.
\end{equation*}
Furthermore, the component $\fku_0$ from the unique solution is also the unique solution to the problem
\begin{flalign*}
\label{eq:AbstractRootFormulation}
\qquad
 \left\{
  \begin{aligned}
   &\text{Find }\,\fku_0\in U_0\,,\\
   & b_0(\fku_0,\fkv_0) = \ell(\fkv_0)\,, \quad \text{for all } \,\fkv_0\in V_0\,.
  \end{aligned}
 \right. &&
\end{flalign*}
\end{theorem}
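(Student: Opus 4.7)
The proof splits into three parts: establish the combined inf-sup constant $(\gamma)$ from the two hypotheses $(\gamma_0)$ and $(\hat\gamma)$; conclude well-posedness of the combined problem by a Banach--Ne\v cas--Babu\v ska (BNB) argument; and restrict to $V_0$ to recover the root problem.

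For the inf-sup, given $(\fku_0,\hat{\fku})\in U$, I would seek a test function of the form $\fkv=\fkv_0+t\,\fkv_1$ with parameter $t>0$ to be optimized. Here $\fkv_0\in V_0$ is selected (up to an arbitrarily small slack) via $(\gamma_0)$ so that $b_0(\fku_0,\fkv_0)\gtrsim\gamma_0\|\fku_0\|_{U_0}\|\fkv_0\|_V$, normalized with $\|\fkv_0\|_V\lesssim\|\fku_0\|_{U_0}/\gamma_0$. The vector $\fkv_1$ is supplied by the closed-range theorem applied to $(\hat\gamma)$: the operator $\hat B:\hat U\to V'$, $\hat B\hat{\fku}=\hat b(\hat{\fku},\cdot)$, is bounded below, so its adjoint $\hat B^{*}:V\to\hat U'$ is surjective and, restricted to $V_0^{\perp}=(\ker\hat B^{*})^{\perp}$, is a topological isomorphism with inverse bounded by $1/\hat\gamma$. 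This yields $\fkv_1\in V_0^{\perp}$ with $\hat b(\hat{\fku},\fkv_1)=\|\hat{\fku}\|_{\hat U}^{2}$ and $\|\fkv_1\|_V\leq\|\hat{\fku}\|_{\hat U}/\hat\gamma$. Because $\fkv_0\in V_0$ annihilates $\hat b(\hat{\fku},\cdot)$,
\[
b((\fku_0,\hat{\fku}),\fkv)=b_0(\fku_0,\fkv_0)+t\,b_0(\fku_0,\fkv_1)+t\,\|\hat{\fku}\|_{\hat U}^{2},
\]
and the cross term is bounded by $(tM_0/\hat\gamma)\|\fku_0\|_{U_0}\|\hat{\fku}\|_{\hat U}$. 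Combined with the orthogonal decomposition $\|\fkv\|_V^{2}=\|\fkv_0\|_V^{2}+t^{2}\|\fkv_1\|_V^{2}$, a Young inequality and an optimization in $t$ should deliver exactly the advertised constant $\gamma=(\gamma_0^{-2}+\hat\gamma^{-2}(M_0/\gamma_0+1))^{-1/2}$.

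For step (2), the inf-sup $(\gamma)$ implies the map $B:U\to V'$, $B(\fku_0,\hat{\fku})=b((\fku_0,\hat{\fku}),\cdot)$, is injective with closed range. By the closed-range theorem, $R(B)=V_{00}^{\circ}$, where the right kernel of $b$ is seen to be exactly $V_{00}$: setting $\hat{\fku}=0$ forces $b_0(\fku_0,\fkv)=0$ for all $\fku_0\in U_0$, and setting $\fku_0=0$ forces $\fkv\in V_0$. Thus the compatibility condition $\ell|_{V_{00}}=0$ is precisely what is needed to place $\ell$ in $R(B)$, giving existence; uniqueness and the norm bound $\|(\fku_0,\hat{\fku})\|_U\leq\|\ell\|_{V'}/\gamma$ are then direct consequences of $(\gamma)$.

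Step (3) is immediate: testing the combined problem against any $\fkv_0\in V_0$ kills $\hat b(\hat{\fku},\fkv_0)$, leaving $b_0(\fku_0,\fkv_0)=\ell(\fkv_0)$ for all $\fkv_0\in V_0$, and uniqueness in the root problem follows from $(\gamma_0)$. The main obstacle I anticipate is the bookkeeping in step (1): carefully tracking the slack in the approximate suprema from $(\gamma_0)$ and choosing $t$ so that the combined constant is genuinely the sharp $\gamma$ stated in the theorem rather than a weaker quantity.
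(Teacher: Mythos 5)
Your plan is correct in its essential structure, and steps (2) and (3) hold up: the right kernel of $b$ is precisely $V_{00}$, the combined inf-sup $(\gamma)$ gives injectivity and closed range of $B$, the compatibility condition places $\ell$ in the range, and restriction of the test space to $V_0$ recovers the root problem (whose uniqueness follows from $(\gamma_0)$). The paper itself outsources the proof to the cited reference, so there is no in-house argument to compare against; your ``forward'' construction $\fkv=\fkv_0+t\,\fkv_1$ with $\fkv_1\in V_0^\perp$ obtained from the closed-range theorem is a legitimate route. I would note, however, that a ``backward'' route for step (1) is shorter and avoids both the construction of $\fkv_1$ and the optimization in $t$ that worries you: set $\ell=B(\fku_0,\hat{\fku})$; since $\ell|_{V_0}=b_0(\fku_0,\cdot)|_{V_0}$ by the definition of $V_0$, condition $(\gamma_0)$ gives $\gamma_0\|\fku_0\|_{U_0}\leq\|\ell|_{V_0}\|_{V_0'}\leq\|\ell\|_{V'}$, and then
\begin{equation*}
\hat\gamma\,\|\hat{\fku}\|_{\hat U}\;\leq\;\|\hat b(\hat{\fku},\cdot)\|_{V'}\;=\;\|\ell-b_0(\fku_0,\cdot)\|_{V'}\;\leq\;\|\ell\|_{V'}+M_0\|\fku_0\|_{U_0}\;\leq\;\Bigl(1+\tfrac{M_0}{\gamma_0}\Bigr)\|\ell\|_{V'}\,,
\end{equation*}
so that $\|(\fku_0,\hat{\fku})\|_U^2\leq\bigl(\gamma_0^{-2}+\hat\gamma^{-2}(1+M_0/\gamma_0)^2\bigr)\|\ell\|_{V'}^2$ in two lines.

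Your worry about the bookkeeping is well founded, but not quite for the reason you anticipate: the constant as printed in the statement, $\gamma^{-2}=\gamma_0^{-2}+\hat\gamma^{-2}\bigl(\tfrac{M_0}{\gamma_0}+1\bigr)$, does not appear to be attainable in general, and the inner factor should almost certainly carry a square. A $2\times2$ check: take $V=\R^2$ Euclidean, $U_0=\hat U=\R$, $b_0(\fku_0,\fkv)=\fku_0(\fkv_1+10\fkv_2)$ and $\hat b(\hat{\fku},\fkv)=\hat{\fku}\,\fkv_2$. Then $V_0=\R\times\{0\}$, $\gamma_0=\hat\gamma=1$, $\|b_0\|=\sqrt{101}$, and the true inf-sup constant of $b$ equals $\bigl((102+\sqrt{10400})/2\bigr)^{-1/2}\approx 0.099$, which is strictly smaller than the printed value $(2+\sqrt{101})^{-1/2}\approx 0.288$ but larger than the squared version $(103+2\sqrt{101})^{-1/2}\approx 0.090$. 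So do not try to force your Young-plus-optimization calculation to land on the printed formula; the squared constant from the backward argument above (or any positive $\gamma$ depending only on $\gamma_0$, $\hat\gamma$, $M_0$) is all that the rest of the theorem needs.
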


In the theorem above, we interpret $U_0$ to be the space of field solution variables (from the original unbroken formulation) and $\hat{U}$ to be the space of interface variables.
Indeed, $b_0$ is the bilinear form from the original problem, while $\hat{b}$ is the contribution from the interface variables.
For instance, the primal formulation in \eqref{eq:PrimalBrokenFormulation},
\begin{equation*}
	b((u,\hat{\sigma}_\fkn),v) = (\C:\nabla u,\nabla v)_\mesh - \langle\hat{\sigma}_\fkn,\tr_{\grad}v\rangle_{\bdry\mesh}\,,
\end{equation*}
is decomposed into $b_0(u,v) + \hat{b}(\hat{\sigma}_\fkn,v)$ by
\begin{equation*}
 b_0(u,v) = (\C:\nabla u,\nabla v)_\mesh\,,\qquad
 \hat{b}(\hat{\sigma}_\fkn,v) = - \langle\hat{\sigma}_\fkn,\tr_{\grad}v\rangle_{\bdry\mesh}\,.
\end{equation*}
Next, observe that by Lemma \ref{lem:H1Subspace},
\begin{equation*}
  V_0 = \{ v\in \bfH^1_{\Gamma_0}(\mesh) \mid \langle\hat{\sigma}_\fkn,\tr_{\grad}v\rangle_{\bdry\mesh} = 0
  	\,\,\text{for all}\,\, \hat{\sigma}_\fkn\in \bfH^{-\frac{1}{2}}_{\Gamma_1}(\bdry\mesh) \}
  		= \bfH^1_{\Gamma_0}(\Omega)\,.
\end{equation*}
Moreover, we immediately satisfy \ref{item:Assumption2} with $\hat{\gamma}=1$ by use of identity \eqref{eq:InterfaceNorms},
\begin{equation*}
	\|\hat{\sigma}_\fkn\|_{\bfH^{-\frac{1}{2}}(\bdry\mesh)}
			=\sup_{v\in\bfH^1(\mesh)\setminus\{0\}}\widefrac[0pt]{|\langle\hat{\sigma}_\fkn,\tr_{\grad}v\rangle_{\bdry\mesh}|}
				{\qquad\|v\|_{\bfH^1(\mesh)}}\,.
\end{equation*}
Furthermore, whenever $\Gamma_0\neq\varnothing$, we satisfy \ref{item:Assumption1} by Corollary \ref{cor:allwellposed}, while
\begin{equation*}
  V_{00} = \{ v\in \bfH^1_{\Gamma_0}(\Omega) \mid (\C:\nabla u, \nabla v)_\mesh = 0
  	\,\,\text{for all}\,\,u \in \bfH^1_{\Gamma_0}(\Omega) \}
  		= \{ v\in \bfH^1_{\Gamma_0}(\Omega) \mid \nabla v = 0 \}
  			= \{0\}\,.
\end{equation*}
Hence, by Theorem \ref{thm:BrokenWellposedness}, we have guaranteed existence and uniqueness of a solution in the broken primal formulation, $(\scP_\mesh)$.

\begin{corollary}
\label{cor:allbrokenwellposed}
Let $\,\Gamma_0$ be relatively open in $\bdry\Omega$.
If $\,\Gamma_0\neq\varnothing$, then the broken variational formulations $(\scS_\mesh)$, $(\scU_\mesh)$, $(\scD_\mesh)$, $(\scM_\mesh)$ and $(\scP_\mesh)$ are well-posed.
\end{corollary}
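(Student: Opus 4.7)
The plan is to invoke Theorem~\ref{thm:BrokenWellposedness} for each of the five broken formulations, exactly as was done in the prose above for $(\scP_\mesh)$. The formulation $(\scS_\mesh)$ requires nothing new: its test spaces are already $\bfL^2(\Omega)$-based and it coincides verbatim with the unbroken $(\scS)$, so well-posedness follows immediately from Corollary~\ref{cor:allwellposed}. For the remaining four formulations I would follow a fixed template, after which only one step --- assumption $(\hat{\gamma})$ for the ultraweak case --- requires any real thought.

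The template for each of $(\scU_\mesh)$, $(\scD_\mesh)$, $(\scM_\mesh)$ (and, redundantly, $(\scP_\mesh)$) proceeds in three steps. First, I would decompose the trial space as $U = U_0 \times \hat{U}$, where $U_0$ is the field-variable space of the underlying unbroken formulation and $\hat{U}$ collects the new interface unknowns (namely $\bfH^{\frac{1}{2}}_{\Gamma_0}(\bdry\mesh) \times \bfH^{-\frac{1}{2}}_{\Gamma_1}(\bdry\mesh)$ for ultraweak, and a single factor for the mixed or dual-mixed cases), and split the bilinear form accordingly as $b = b_0 + \hat{b}$, with $b_0$ the original unbroken bilinear form lifted to the broken test space via $(\cdot,\cdot)_\mesh$ and $\hat{b}$ collecting the interface pairings. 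Second, I would apply Lemma~\ref{lem:CharacterizationOfTraces}: parts (i) and (ii) respectively force the $\bfH^1(\mesh)$- and $\bfH(\div,\mesh)$-components of any $\fkv \in V_0$ into their $\Gamma_0$- or $\Gamma_1$-conforming unbroken counterparts, so that $V_0$ is exactly the test space of the original unbroken formulation and $b_0|_{U_0\times V_0}$ is precisely its bilinear form. Third, I would deduce assumption $(\gamma_0)$ from Corollary~\ref{cor:allwellposed}, which gives inf-sup stability for each unbroken formulation, and simultaneously observe that $V_{00} = \{0\}$ follows from the injectivity of the corresponding adjoint operator, so that the compatibility condition on $\ell$ is automatic.

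The only step expected to require genuine work is assumption $(\hat{\gamma})$ for the ultraweak formulation $(\scU_\mesh)$, where $\hat{b}((\hat{u}, \hat{\sigma}_\fkn), (\tau,v)) = -\langle\hat{u},\tr_{\div}\tau\rangle_{\bdry\mesh} - \langle\hat{\sigma}_\fkn,\tr_{\grad}v\rangle_{\bdry\mesh}$ couples two independent interface variables against two independent test components. The plan here is to exploit the two identities of \eqref{eq:InterfaceNorms} by testing separately against pairs of the form $(\tau,0)$ and $(0,v)$: each test recovers one of the two interface norms, and combining them yields a lower bound by $\max(\|\hat{u}\|_{\bfH^{\frac{1}{2}}(\bdry\mesh)}, \|\hat{\sigma}_\fkn\|_{\bfH^{-\frac{1}{2}}(\bdry\mesh)})$, which in turn controls $\|(\hat{u},\hat{\sigma}_\fkn)\|_{\hat{U}}/\sqrt{2}$, giving $\hat{\gamma} = 1/\sqrt{2}$. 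For $(\scD_\mesh)$ and $(\scM_\mesh)$ only one interface variable is present, so \eqref{eq:InterfaceNorms} applies directly with $\hat{\gamma} = 1$, exactly as in the primal case already spelled out in the prose. With $(\gamma_0)$, $(\hat{\gamma})$, and compatibility verified, Theorem~\ref{thm:BrokenWellposedness} closes the argument in each case.
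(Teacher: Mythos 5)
Your proposal follows essentially the same route as the paper: decompose $b=b_0+\hat{b}$, use Lemma~\ref{lem:CharacterizationOfTraces} to identify $V_0$ with the unbroken test space, obtain $(\gamma_0)$ from Corollary~\ref{cor:allwellposed}, obtain $(\hat{\gamma})$ from the norm identities \eqref{eq:InterfaceNorms}, check $V_{00}=\{0\}$, and invoke Theorem~\ref{thm:BrokenWellposedness}. The $\hat{\gamma}=1/\sqrt{2}$ computation for $(\scU_\mesh)$ correctly fills in a detail the paper leaves implicit, and the direct identification $(\scS_\mesh)=(\scS)$ is a harmless shortcut (the paper's template also covers it with $\hat U=\{0\}$ vacuously).
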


\begin{proof}
Continue as with the primal formulation in each of the other cases.
In general, use Lemma~\ref{lem:CharacterizationOfTraces} to define $V_0$ in concrete terms, and make use of the identities in \eqref{eq:InterfaceNorms} to satisfy \ref{item:Assumption2}.
The satisfaction of \ref{item:Assumption1} follows from Corollary \ref{cor:allwellposed}, while a simple calculation yields that $\ell|_{V_{00}}=0$ in all cases.
Finally, using Theorem \ref{thm:BrokenWellposedness}, one concludes that there exists a unique solution for each broken formulation, meaning they are all well-posed.
\end{proof}










\section{Minimum residual methods}
\label{sec:MinResidual}

\subsection{Optimal stability} 
\label{sec:derivation}

We consider a general variational formulation.
Let $U$ and $V$ be Hilbert spaces over a fixed field $\mathbb{F}\in\{\R,\mathbb{C}\}$.
If $\mathbb{F}=\R$ (or $\mathbb{C}$), allow $b:U\times V \to \mathbb{F}$ to be a bilinear (or sesquilinear) form and $\ell\in V'$ a continuous linear form.
We are interested in the abstract variational problem
\begin{equation}
\label{eq:bilinearFormEQ}
 \left\{
  \begin{aligned}
   &\text{Find } \fku\in U\,,\\
   &b(\fku,\fkv) = \ell(\fkv)\,,\quad \text{for all }\, \fkv\in V\,,
  \end{aligned}
 \right.
\end{equation}
which we assume to be well-posed.
As demonstrated in the previous two sections, for a single linear elasticity problem, we have plenty of candidates for the forms $b$ and $\ell$.

If $\mathbb{F}=\R$ (or $\mathbb{C}$), observe that the bilinear (or sesquilinear) form, $b$, uniquely defines a continuous linear (or antilinear) operator $B:U\to V'$ such that $\langle B\fku,\fkv\rangle_{V'\times V} = b(\fku,\fkv)$.
Therefore \eqref{eq:bilinearFormEQ} may be reinterpreted as the operator equation
\begin{equation}
\label{eq:linearFormEQ}
 \left\{
  \begin{aligned}
   &\text{Find } \fku\in U\,,\\
   &B\fku = \ell\,.
  \end{aligned}
 \right.
\end{equation}

Let $U_h\subseteq U$ be the trial space we have chosen to represent our solution with. Then, out of all elements of $U_h$ it is desirable to find the best solution to the given problem.
To this end, we seek to find the solution to the following minimization problem on the residual,
\begin{equation}
\label{eq:MinResidual}
	\fku_h = \argmin_{\fku\in U_h} \|B\fku-\ell\|^2_{V'}\,.
\end{equation}
Solving this equation will give us a best approximation (dependent on the norm $\|\cdot\|_V$) to the solution of \eqref{eq:bilinearFormEQ} in our truncated energy space $U_h$.
Vanishing of the first variation at the minimizer implies that it satisfies the variational equation
\begin{equation}
\label{eq:FirstVarMinimization}
	\big(B\fku_h-\ell,B\delta \fku \big)_{V'} = 0\,, \quad \text{for all }\, \delta \fku \in U_h\,.
\end{equation}
At this point we are left with an inner product in a dual space which does not lend itself easily to computation.
However, we can transform this equation to that of an inner product over $V$ by recalling the Riesz map, $R_V : V\to V'$.
Indeed, the Riesz representation theorem guarantees the unique existence of such a linear isometric isomorphism, $R_V$, which satisfies the equation
\begin{equation}
\label{eq:Riesz}
 	\langle R_V \fkv , \delta \fkv\rangle_{V'\times V} = (\fkv,\delta \fkv)_V\,, \quad\text{for all }\, \fkv,\delta \fkv\in V\,.
\end{equation}
Using the identity above, we may rewrite \eqref{eq:FirstVarMinimization} as
\begin{equation}
\label{eq:NormalEquations}
	\big(R_V^{-1}(B\fku_h-\ell),R_V^{-1} B\delta \fku \big)_V = 0\,, \quad \text{for all }\, \delta \fku \in U_h\,.
\end{equation}
This is called the \textit{normal equation}, while \eqref{eq:FirstVarMinimization} is called the \textit{dual normal equation}.

Defining the \textit{error representation function}, $\psi = R_V^{-1}(B\fku_h-\ell)\in V$, the same problem is written as
\begin{equation}
\label{eq:SaddlePointNormalEq}
 \left\{
  \begin{alignedat}{3}
   \multicolumn{1}{l}{$\text{Find } \fku_h\in U_h\,,\, \psi\in V\,,$} \hspace{-300pt}& &&\\
   -( \psi, \fkv )_V + b(\fku_h, \fkv) &= \ell(\fkv) \,, \quad &&\text{for all }\, \fkv \in V\,,\\
   b(\delta \fku, \psi) &= 0\,, \quad &&\text{for all }\, \delta \fku \in U_h\,,
  \end{alignedat}
 \right.
\end{equation}
where the first equation corresponds to the definition of $\psi$ in the form $R_V\psi=B\fku_h-\ell$, which is precisely the residual, and the second equation is the normal equation, \eqref{eq:NormalEquations}, rewritten.

Alternatively, writing $\delta \fkv=R_V^{-1} B\delta \fku$ in \eqref{eq:NormalEquations} yields the problem
\begin{equation}
\label{eq:OptimalTestNormalEq}
 \left\{
  \begin{alignedat}{1}
   &\text{Find } \fku_h\in U_h\,,\\
   &b(\fku_h,\delta \fkv) = \ell(\delta \fkv)\,, \quad \text{for all }\, \delta \fkv \in V^\opt\,,
  \end{alignedat}
 \right.
\end{equation}
where we define the \emph{optimal test space}, $V^\opt = R_V^{-1}BU_h$, which obviously satisfies $\dim(V^\opt)=\dim(U_h)$.
With these discrete trial and test spaces, it can be shown the stability properties of the original problem are reproduced in the sense that
\begin{equation}
	\inf_{\fku\in U\setminus\{0\}}\sup_{\fkv\in V\setminus\{0\}}\frac{|b(\fku,\fkv)|}{\|\fku\|_U\|\fkv\|_V}
		=\inf_{\fku\in U_h\setminus\{0\}}\sup_{\fkv\in V^\opt\setminus\{0\}}\frac{|b(\fku,\fkv)|}{\|\fku\|_U\|\fkv\|_V}\,,
\end{equation}
and for this reason the discrete problem is said to have \textit{optimal stabililty}.


Clearly \eqref{eq:NormalEquations}, \eqref{eq:SaddlePointNormalEq} and \eqref{eq:OptimalTestNormalEq} are equivalent.
They differ only in their interpretation.




\subsection{The DPG methodology} 
\label{sec:the_dpg_method}

In general, neither \eqref{eq:NormalEquations}, \eqref{eq:SaddlePointNormalEq} nor \eqref{eq:OptimalTestNormalEq} are amenable to computation, because in practice we cannot test with infinite $\fkv\in V$ to invert the Riesz map exactly.
For this reason we must seek an approximate solution to them by considering only a truncated, yet large, \textit{enriched test space} $V^\enr\subseteq V$ satisfying $\dim(V^\enr)>\dim(U_h)$.

For example, \eqref{eq:SaddlePointNormalEq} becomes
\begin{equation}
\label{eq:SaddlePointDPG}
 \left\{
  \begin{alignedat}{3}
   \multicolumn{1}{l}{$\text{Find } \fku_h\in U_h\,,\, \psi_h\in V^\enr\,,$} \hspace{-400pt}\\
   -( \psi_h, \fkv )_V + b(\fku_h, \fkv) &= \ell(\fkv) \,, \quad &&\text{for all }\, \fkv \in V^\enr\,,\\
   b(\delta \fku, \psi_h) &= 0\,, \quad &&\text{for all }\, \delta \fku \in U_h\,.
  \end{alignedat}
 \right.
\end{equation}
This can be rewritten as a linear system
\begin{equation}
\label{eq:SaddlePointSystem}
	\begin{pmatrix}-R_{V^\enr} & B\\B^\T & 0\end{pmatrix}
  \begin{pmatrix}\psi_h\\\fku_h\end{pmatrix}
  =\begin{pmatrix}
  	\ell\\
		0
  \end{pmatrix}\,,  
\end{equation}
where, naturally,
\begin{equation}
\label{eq:DiscreteRiesz}
 	\langle R_{V^\enr} \fkv , \delta \fkv\rangle_{V'\times V} = (\fkv,\delta \fkv)_V\,, \quad \text{for all }\, 
 		\fkv,\delta \fkv\in V^\enr\,,
\end{equation}
and $B^\T=B':V\to U'$ is the transpose of $B$ defined by $\langle B^\T \fkv,\fku\rangle_{U'\times U}=b(\fku,\fkv)=\langle B\fku,\fkv\rangle_{V'\times V}$.
Static condensation of \eqref{eq:SaddlePointSystem} to remove $\psi_h$ leads to the \textit{discrete normal equations},
\begin{equation}
\label{eq:DiscreteNormalEq}
	B^\T R_{V^\enr}^{-1} B \fku_h = B^\T R_{V^\enr}^{-1} \ell\,.
\end{equation}
One can similarly attain these equations using \eqref{eq:OptimalTestNormalEq} and testing with the approximate optimal test space $V^\opt_h = R_{V^\enr}^{-1}BU_h$, or equivalently using \eqref{eq:NormalEquations} by first considering the residual minimization problem $\fku_h=\argmin_{\fku\in U_h} \| B\fku-\ell \|^2_{(V^\enr)'}$.

At this point, one may now observe that solving the discrete normal equations using traditional unbroken test spaces would be ineffective because it involves inverting a large linear system resulting from $R_{V^\enr}$. 
To circumvent this issue, we reformulate the problem with broken test spaces and so allow the inversion calculation of $R_{V^\enr}$ to be localized.
This gives more efficient (and possibly parallel) computations at the cost of adding more degrees of freedom through extra interface unknowns.
Indeed, with broken test spaces, the normal equations need only be computed element-wise since the matrix representing the Riesz map has an easily invertible diagonal block structure with each block representing an element.
The use of broken test spaces coupled with the discrete normal equations aiming to approximate optimal stability properties constitutes the \textit{DPG methodology}.

\begin{remark}
The modus operandi in DPG computations has been to construct trial spaces with polynomial orders inferred from an order $p$ discrete exact sequence of the first type approximating
\begin{equation*}
	H^1 \xrightarrow{\,\,\nabla\,\,} H(\curl) \xrightarrow{\nabla\times} H(\div) \xrightarrow{\,\nabla\cdot\,} L^2 \, .
\end{equation*}
Essentially, when we take order $p$ polynomials for $H^1$ shape functions, we use order $p-1$ polynomials for our $L^2$ shape functions.
For the $H(\div)$ shape functions, we use order $p$ polynomials whose normal trace is of order $p-1$ on each face. 
For the traces, we inspect the trace operators and choose to use $p$ order polynomials for the $H^{\frac{1}{2}}$ variables and $p-1$ order polynomials for the $H^{-\frac{1}{2}}$ variables.
%
%
%
To construct the enriched test space $V^\enr$, it has become customary to choose a uniform $p$ enrichment over the order taken by the trial space variables.
Denoting the enrichment order $\mathrm{d}p$, we choose $p+\mathrm{d}p$ order test functions.
Naturally, this explanation is an understatement of a more complex subject, since the appropriate exact sequence spaces of polynomials differ considerably depending on the element shape \cite{hpbook2,Fuentes2015}.
\end{remark}

\begin{remark}
\label{rmk:Fortinapproximation}
A natural question is whether the approximate optimal test space, $V^\opt_h$ is an accurate representation of $V^\opt$ for a given enrichment, $\dd p$.
Or, similarly, whether the solution of \eqref{eq:DiscreteNormalEq} is sufficiently close to the solution of \eqref{eq:NormalEquations}. A means of analysis of this question has been presented in the context of Fortin operators in \cite{gopalakrishnan2014analysis,Carstensen15,Nagaraj2015} and so far, several different problems have been studied.
In fact, through this analysis for linear elasticity in the ultraweak setting with isotropic materials, a very practical $\mathrm{d}p \leq 3$ has been shown to be sufficient for optimal convergence rates in 3D computations \cite{gopalakrishnan2014analysis}.
For us to include a similar account for each variational formulation we consider here would be substantially distracting.
As it will be observed in Section \ref{sec:Numerics}, in our work, just $\mathrm{d}p = 1$ was sufficient to obtain the desired convergence rates in all of our computations.
\end{remark}

\begin{remark}
Solving the saddle point problem, \eqref{eq:SaddlePointDPG}, outright, also has some benefits.
Indeed this system, albeit larger than the discrete normal equations, can be solved with standard finite elements due to its symmetric functional setting with $U_h\times V^\enr$ used for both trial and test spaces.
This approach has been explored in \cite{dahmen2012adaptive,cohen2012adaptivity}.
\end{remark}


\subsection{\texorpdfstring{$L^2$}{L2} test spaces} 
\label{sec:hybrid_dpg}

In this section we abandon any distinction between $\bfL^2(\Omega)$, $\bfL^2(\Omega;\Sym)$ and $\bfL^2(\Omega;\Skw)$, and liberally refer to any product of them as simply $L^2$.
When some of the test variables are in $L^2$ it is possible to exploit that $(L^2)'\cong L^2$ to avoid, at least to some degree, the discrete inversion of the Riesz map.

The most salient case occurs with the strong formulation, \eqref{eq:TrivialBrokenFormulation}, where $V = L^2$.
Here, the first variation, \eqref{eq:FirstVarMinimization}, which is equivalent to \eqref{eq:NormalEquations}, \eqref{eq:SaddlePointNormalEq} and \eqref{eq:OptimalTestNormalEq}, is written as
\begin{equation}
\label{eq:LeastSquares}
 \left\{
  \begin{aligned}
   &\text{Find } \fku_h\in U_h\,,\\
   &(B\fku_h,B\delta \fku)_{(L^2)'} = (\ell,B\delta \fku)_{(L^2)'}\,, \quad \text{for all }\, \delta \fku \in U_h\,.
  \end{aligned}
 \right.
\end{equation}
In this case the $(L^2)'$ inner product \textit{is} amenable to computation and is simply the usual $L^2$ inner product after trivially identifying $B\fku_h$, $B\delta \fku$ and $\ell$ with $L^2$ functions.
Note this immediate identification is precisely the inverse Riesz map, which is otherwise usually nontrivial.
This simplified method already exists in the literature and is known as the first order system least squares formulation (FOSLS).
Observe that in this case (assuming exact integration) the optimal stability of the original formulation is reproduced exactly due to the exact inversion of the Riesz map, thus avoiding the numerical error that arises when discretizing with an enriched test space, $V^\enr$.



When part of the test space is in $L^2$, such as in \eqref{eq:Mixed1BrokenFormulation} and \eqref{eq:Mixed2BrokenFormulation}, similar optimizations are also possible, but only in the $L^2$ part of the test space, where the Riesz map is trivial.
This both lowers computational cost and helps to better approach optimal stability.
We now present a derivation for those cases in a general setting.

Let $W$ be a Hilbert space and assume the test space has the form $V = W \times L^2$ with the Hilbert norm $\|(\fkv_W,\fkv_{L^2})\|_V^2 = \|\fkv_W\|_W^2+\|\fkv_{L^2}\|_{L^2}^2$.
We thereby decompose $B = B_W \times B_{L^2}$ and $\ell = \ell_W \times \ell_{L^2}$ and rewrite the normal equation, \eqref{eq:NormalEquations}, in a decoupled form
\begin{equation}
\label{eq:NormalEquationHybrid}
	\big(R_W^{-1}(B_W \fku_h-\ell_W),R_W^{-1} B_W\delta \fku \big)_W+\big(B_{L^2}\fku_h-\ell_{L^2},B_{L^2}\delta \fku\big)_{(L^2)'}=0\,,
 	\quad \text{for all } \, \delta \fku \in U_h\,.
\end{equation}
After defining $\psi_W = R_W^{-1}(B_W \fku_h-\ell_W)$ and taking $\langle \cdot,\cdot \rangle = \langle \cdot,\cdot \rangle_{W'\times W}$, the duality pairing between $W$ and $W'$, \eqref{eq:NormalEquationHybrid} leads to the system
\begin{equation}
\label{eq:PseudoSaddlePointHybrid}
  \left\{
  \begin{alignedat}{3}
   \multicolumn{1}{l}{$\text{Find } \fku_h\in U_h\,,\, \psi_W\in W\,,$}\\
   -( \psi_W, \fkw )_W + \langle B_W \fku_h, \fkw\rangle
    &= \langle \ell_W, \fkw\rangle
     \,, \quad &&\text{for all }\, \fkw \in W\,,\\
   \langle B_W\delta \fku, \psi_W\rangle
    + \big(B_{L^2} \fku_h,B_{L^2}\delta \fku \big)_{(L^2)'} &= \big(\ell_{L^2},B_{L^2}\delta \fku \big)_{(L^2)'}\,,
    	\quad &&\text{for all }\, \delta \fku \in U_h\,.
  \end{alignedat}
 \right.
\end{equation}
Alternatively, identifying $\delta \fkw=R_W^{-1} B_W\delta \fku$ in \eqref{eq:NormalEquationHybrid} yields
\begin{equation}
\label{eq:OptimalTestNormalEqHybrid}
 \left\{
  \begin{alignedat}{3}
   &\text{Find } \fku_h\in U_h\,,\\
   &\langle B_W\fku_h,\delta\fkw\rangle\!+\!(B_{L^2}\fku_h,B_{L^2}\delta \fku)_{(L^2)'}\!=\!\langle\ell_W,\delta\fkw\rangle
    \!+\! (\ell_{L^2},B_{L^2}\delta\fku)_{(L^2)'}\,,\quad\!\!\text{for all }\,(\delta \fkw,\delta\fku) \!\in\! W^\opt\,\!,\!\!
  \end{alignedat}
 \right.
\end{equation}
where the optimal graph test space is $W^\opt=\{(R_W^{-1}B_W\delta\fku,\delta\fku)\mid\delta\fku\in U_h\}\subseteq W\times U_h$.


\begin{remark}
In some other limited scenarios, the optimal test space can be exactly computed a priori \cite{demkowicz2011analysis}.
In particular, a symmetric functional setting and a coercive bilinear form which is realized as an inner product on the (trial or test) space minimizing the residual, results in the classic Bubnov-Galerkin method. 
\end{remark}


\subsection{Adaptivity} 
\label{sec:adaptivity}

One big advantage of minimum residual methods is that they have a built-in a posteriori error estimator which can be used in an adaptive mesh refinement algorithm.
This is because, implicitly, by minimizing the residual we are also minimizing the error in a problem-dependent energy norm,
\begin{equation}
\label{eq:EnergyNorm}
 	\|\cdot\|_E = \|B(\cdot)\|_{V'}=\sup_{\fkv\in V\setminus\{0\}} \frac{b(\cdot,\fkv)}{\|\fkv\|_V}\,.
\end{equation}
If $\fku$ is the exact solution to the abstract variational problem, \eqref{eq:bilinearFormEQ}, satisfying $B\fku=\ell$, and $\fku_h$ is defined by the minimization of the residual in $U_h$ as in \eqref{eq:MinResidual}, then
\begin{equation}
\label{eq:EnergyNormResidual}
 	\|\fku-\fku_h\|_E = \|B\fku-B\fku_h\|_{V'}=\|B\fku_h-\ell\|_{V'},
\end{equation}
which is precisely the minimum residual attained by $\fku_h$ in \eqref{eq:MinResidual}.
Hence, if the trial space is refined such that it is larger and embedded in the previous trial space, then the minimum residual in \eqref{eq:MinResidual} (and thus $\|\fku-\fku_h\|_E$) will be smaller provided that $V$ does not change with every refinement, and that the norm in $V'$ is computed exactly.
Under these assumptions, the residual will \textit{always} decrease with each successive refinement.
Nevertheless, when using mesh-dependent broken test spaces and interface trial variables, as the mesh is refined, the embeddings of trial spaces may not hold, and the test norms will change.
In theory, this may cause the residual to increase \cite{Heuer15}.
In practice, however, the residual is usually observed to decrease with each mesh refinement.

Due to its (typically) decreasing behavior, the residual is an ideal a posteriori error estimator as long as it can be expressed as a sum of residual contributions from each element.
This allows to detect, via some discrete criterion, which individual elements need to be further refined (see any simple greedy algorithm).
Unfortunately, in general, we cannot exactly compute
\begin{equation}
\label{eq:ErrorComputation}
  \|B\fku_h-\ell\|_{V'}^2=\langle B\fku_h-\ell,R_V^{-1}(B\fku_h-\ell)\rangle_{V'\times V}\,,
\end{equation}
due to the nature of the inverse Riesz map.
In a discrete setting with a truncation $V^\res\subseteq V$, the \textit{approximate} residual becomes $\|B\fku_h-\ell\|_{(V^\res)'}^2=\langle B\fku_h-\ell,R_{V^\res}^{-1}(B\fku_h-\ell)\rangle_{V'\times V}=(B\fku_h-\ell)^\T R_{V^\res}^{-1}(B\fku_h-\ell)$, where $B\fku_h-\ell$ represents a vector with each component being the duality pairing with a basis element of $V^\res$.
In general, the approximate residual cannot be expressed as a sum of element contributions.
However, the use of broken test spaces allows for a completely localized and parallelizable computation of the element-wise residual contributions.
Therefore, the DPG methodology is particularly convenient to implement residual-based adaptive refinement strategies.

%


\begin{remark}
Although we intuitively expect the \textit{exact} residual will decrease with each successive refinement, this may not happen with the \textit{approximate} residual which is actually computed.
The quality of the approximation is dependent upon the orders used for the enriched test space, $V^\enr$, and the truncated residual test space, $V^\res\subseteq V$.
We remark that the truncation used for the residual computation, $V^\res\subseteq V$, can be different from the enriched test space used to solve the discrete problem, $V^\enr\subseteq V$.
Indeed, choosing a larger and fixed truncation for $V^\res$ can improve the accuracy of the a posteriori error estimator, while it also facilitates comparison of the residual computation when solving with different polynomial orders.
Again, the cost of making this choice is not greatly affected provided all residual computations are done in parallel.
\end{remark}

\begin{remark}
In the cases where part of the test space is $L^2$, the residual computation can be further simplified by computing the norm of the residual straight from the inner product.
As in Section \ref{sec:hybrid_dpg}, consider a test space $V=W \times L^2$ and decompositions $B=B_W\times B_{L^2}$ and $\ell=\ell_W \times\ell_{L^2}$.
Then the residual is
\begin{equation}
	\|\fku-\fku_h\|_E^2 = \langle B_W\fku_h-\ell_W,R_W^{-1}(B_W\fku_h-\ell_W)\rangle_{W'\times W}+
		(B_{L^2}\fku_h-\ell_{L^2},B_{L^2}\fku_h-\ell_{L^2})_{(L^2)'}\,,
\end{equation}
where the inner products in $(L^2)'$ can be computed exactly by identifying $B_{L^2}\fku_h-\ell_{L^2}$ with elements of $L^2$, while the term involving $W$ is approximated with a discrete truncation $W^\res\subseteq W$ as described before.
%
%
\end{remark}

\section{Numerical experiments}
\label{sec:Numerics}

Both $\bfH^1$ and $\bfH(\div)$ (as opposed to $\bfH(\div;\Sym)$) are essentially three copies of $H^1$ and $H(\div)$ respectively, and similar assertions apply to $\bfL^2$, $\bfL^2(\Skw)$ and $\bfL^2(\Sym)$ which themselves are a number of copies of $L^2$.
Seeing these as such, in our computations those spaces were discretized using the arbitrary order conforming shape functions defined in \cite{Fuentes2015}.
As previously mentioned, the polynomial orders are naturally determined by a discrete exact sequence of order $p$ associated to a particular element type. 
Without dwelling into the details, $\bfH^1$ and $\bfH(\div)$ were discretized by specific order $p$ polynomials while $\bfL^2$, $\bfL^2(\Skw)$ and $\bfL^2(\Sym)$ were discretized using specific order $p-1$ polynomials (even though the order of the \textit{sequence} is $p$).
Meanwhile, the trace variables in $\bfH^{\frac{1}{2}}$ and $\bfH^{-\frac{1}{2}}$ were discretized simply by isolating the trace of the $\bfH^1$ and $\bfH(\div)$ shape functions which are nonzero at some part of the boundary and so ultimately they were polynomials of order $p$ and $p-1$ respectively (see \cite{hpbook2,Fuentes2015} for more details).
Chosen in this way, the discrete spaces satisfy polynomial interpolation inequalities and thereby assuming discrete stability of the numerical methods, the typical convergence rates are ensured.

The trial spaces were always discretized from a sequence of order $p$, while the enriched test spaces, $V^\enr$, were always discretized from a sequence of order $p+\mathrm{d}p$. 
Here it is notable that $\mathrm{d}p=1$ was sufficient for all of our computations, irrespective of the variational formulation, and so this is the value used for all of the results given.
Whenever necessary, the residual is calculated with a truncated residual test space, $V^\res$, of fixed order $p+\mathrm{d}p=4$ to facilitate comparison between differing values of $p$ within a fixed variational formulation and fixed mesh.

In what follows, the DPG methodology was used as described in Section \ref{sec:the_dpg_method} to solve the strong formulation in \eqref{eq:TrivialBrokenFormulation}, the primal formulation in \eqref{eq:PrimalBrokenFormulation}, the mixed formulation in \eqref{eq:Mixed2BrokenFormulation} and the ultraweak formulation in \eqref{eq:UWeakBrokenFormulation}.
The formulation in \eqref{eq:Mixed1BrokenFormulation} is not computed with, because the primal formulation essentially fulfills its role.
With the objective of better approximating the optimal test spaces, both the strong and mixed formulations are implemented by inverting at least a part of the Riesz map exactly as detailed in Section \ref{sec:hybrid_dpg}.

The residual is $\|B\fku_h-\ell\|_{V'}$, where $B$ is the operator from a given variational formulation, $\ell$ is the linear functional of the formulation (basically the force $f$), $\fku_h$ is the computed trial variable (the tuple of unknowns including the trace variables) and $\|\cdot\|_{V'}$ is the norm in the dual test space $V'$.
The test space is always a Cartesian product of broken spaces among $\bfH^1_{\Gamma_0}(\mesh)$, $\bfH_{\Gamma_1}(\div,\mesh)$, $\bfL^2(\mesh)$, $\bfL^2(\mesh;\Sym)$ and $\bfL^2(\mesh;\Skw)$.
In our computations, each of these spaces was suited with its standard norm as defined in \eqref{eq:BrokenNorms}, so that $V$ was assumed to inherit the associated Hilbert norm.
Choosing different norms for the broken test space is possible, but this was not thoroughly analyzed in this work.
Meanwhile, the relative displacement error is $\frac{\|u-u_h\|}{\|u\|}$, where $u$ is the exact displacement and $u_h$ is the computed displacement, and where the norm $\|\cdot\|$ depends on the variational formulation being used ($\|\cdot\|_{H^1}$ with primal and strong formulations and $\|\cdot\|_{L^2}$ with ultraweak and mixed formulations).

The convergence results are always presented in terms the degrees of freedom $N_{\mathrm{dof}}$, instead of the symbolic size of the element $h=\mathcal{O}(\sqrt[3]{N_{\mathrm{dof}}})$, because this is a more reasonable metric when using adaptive meshes.
Therefore, all expected convergence rates in terms of $h$ should be divided by $3$ (since computations are done in 3D) to get the appropriate rates in terms of $N_{\mathrm{dof}}$ and viceversa.

\begin{remark}
In the context of the mixed formulations with weakly imposed symmetry, $(\scM)$ and $(\scM_\mesh)$, expressed in \eqref{eq:Mixed2Formulation} and \eqref{eq:Mixed2BrokenFormulation} respectively, some authors choose to nontrivially extend the compliance tensor, $\A$, from $\Sym$ to $\Mat$ \cite{mixedelas3d,Bramwell12}.
They do this to ensure that $(\A:\sigma,\sigma)_\Omega$ remains positive definite on $\bfL^2(\Omega,\Mat)$ (and not only on $\bfL^2(\Omega,\Sym)$).
However, in this work, we chose to extend the compliance tensor trivally, so that $\A|_\Skw=0$ (see Section \ref{sec:variationalequations}).
This did not pose any limitations in the infinite-dimensional setting while proving the well-posedness of the mixed variational formulations, $(\scM)$ and $(\scM_\mesh)$ (see Corollaries \ref{cor:allwellposed} and \ref{cor:allbrokenwellposed} and Appendix \ref{app:WellPosedness}).
For the practical DPG methodology, where the test space is designed to approximate the optimal test space, as pointed out in Remark \ref{rmk:Fortinapproximation}, one can show that for a large enough enrichment (i.e. value of $\mathrm{d}p$) the problem remains well-posed.
Thus, it is valid to extend $\A$ trivially, as this does not affect the presence of discrete stability.
\end{remark}

\subsection{Smooth solution}

To test all variational formulations we first tackled a problem with a smooth solution.
We considered the cubic domain $\Omega=(0,1)^3$ and the displacement manufactured solution
\begin{equation}
	u_i(x_1,x_2,x_3)=\sin(\pi x_1)\sin(\pi x_2)\sin(\pi x_3)\,,
	\label{eq:exactdisplacementcube}
\end{equation}
where $i=1,2,3$.
For the stiffness and compliance tensors, $\C$ and $\A$, we considered a simple isotropic material with the nondimensionalized Lam\'e parameters $\lambda=\mu=1$. This induces manufactured stress $\sigma$ and force $f$ defined by the constitutive relation and momentum conservation equations in \eqref{eq:LinElastSystem}.
Loaded with this manufactured body force, we considered each variational formulation with the pure displacement boundary conditions $u_0=0$ (taken from the exact solution).

The convergence of each method is determined by solving successively on a series of uniformly refined meshes where the initial mesh is composed of five tetrahedra.
The convergence can be analyzed in terms of the displacement error or of the residual.
Since the exact solution is smooth, the expected rate of convergence with respect to $h$ is precisely $p$, where $p$ is the order of the discrete sequence associated to the trial space.
In terms of $N_{\mathrm{dof}}$, the expected rate is $\frac{p}{3}$.

\begin{figure}[!ht]
\begin{center}
\includegraphics[trim=3.4cm 0cm 1cm 0cm,clip=true,scale=0.39]{./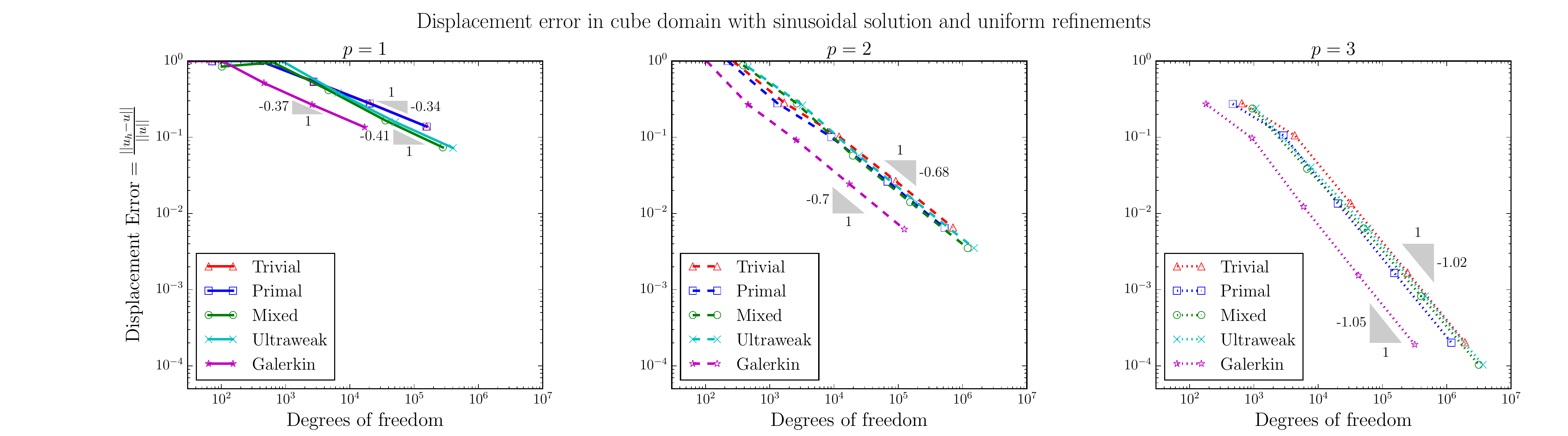}
\caption{Relative displacement error as a function of the degrees of freedom after uniform tetrahedral refinements in the cube domain with a smooth solution.}
\label{fig:cube-uniform-error}
\end{center}
\end{figure}

The results for the relative error are presented in Figure \ref{fig:cube-uniform-error} for $p=1,2,3$ and they are shown for the four previously mentioned DPG formulations alongside the classical Galerkin method given by \eqref{eq:PrimalFormulation}.
In this case, the convergence rates are precisely as expected for all methods.
All DPG methods seem to behave very similarly, while the Galerkin method stands out for using less degrees of freedom (since it involves no trace variables).
The results in terms of the residual show a similar behavior and are illustrated in Figure \ref{fig:cube-uniform}.
Note there are no results for the residual of the classical Galerkin method because we have not implemented a way of calculating it without using broken test spaces.

\begin{figure}[!ht]
\begin{center}
\includegraphics[trim=3.4cm 0cm 1cm 0cm,clip=true,scale=0.39]{./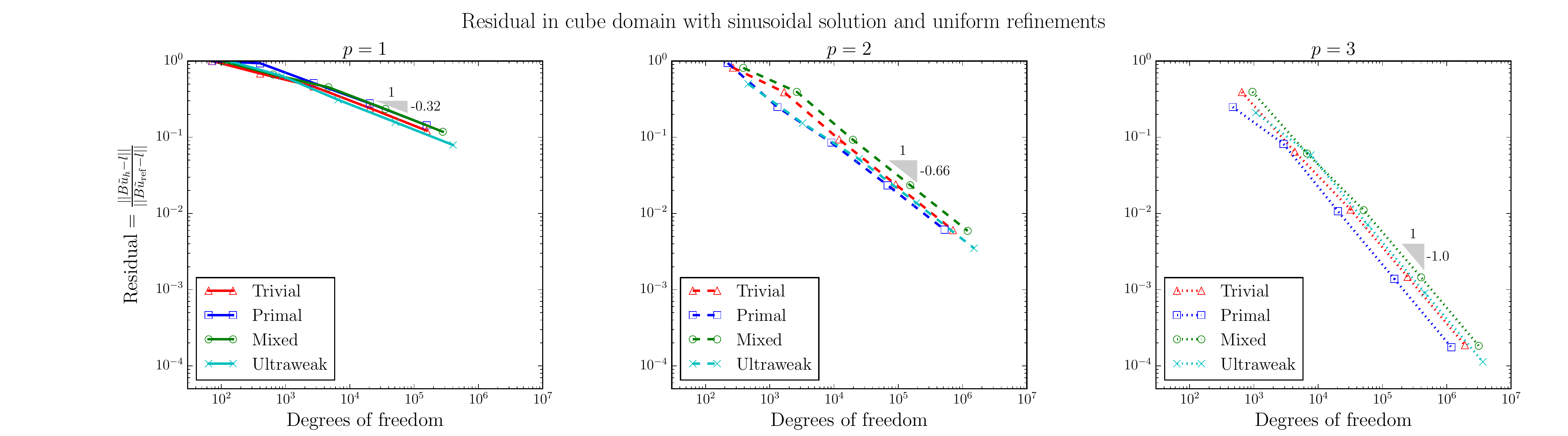}
\caption{Residual as a function of the degrees of freedom after uniform tetrahedral refinements in the cube domain with a smooth solution.}
\label{fig:cube-uniform}
\end{center}
\end{figure}

\subsection{Singular solution}

Perhaps a more interesting test is that of a problem with a singular solution.
A typical domain to ellicit these solutions is the L-shape domain.
A careful presentation in \cite[\S2.21--26]{CokerFilon} considers a 3D domain under plane strain or \textit{averaged} plane stress conditions, where in both cases the analysis effectively reduces it to a two dimensional problem.
Indeed, the L-shape domain example is prevalent as a 2D singular problem in the literature \cite{Vasilopoulos,Bramwell12,Grisvard}, especially the averaged plane stress case, which is elaborate to reformulate back into 3D \cite[\S2.26]{CokerFilon}.
For this reason, in this work we consider the plane \textit{strain} case in 3D.

\begin{figure}[!ht]
\begin{center}
\includegraphics[scale=0.5]{./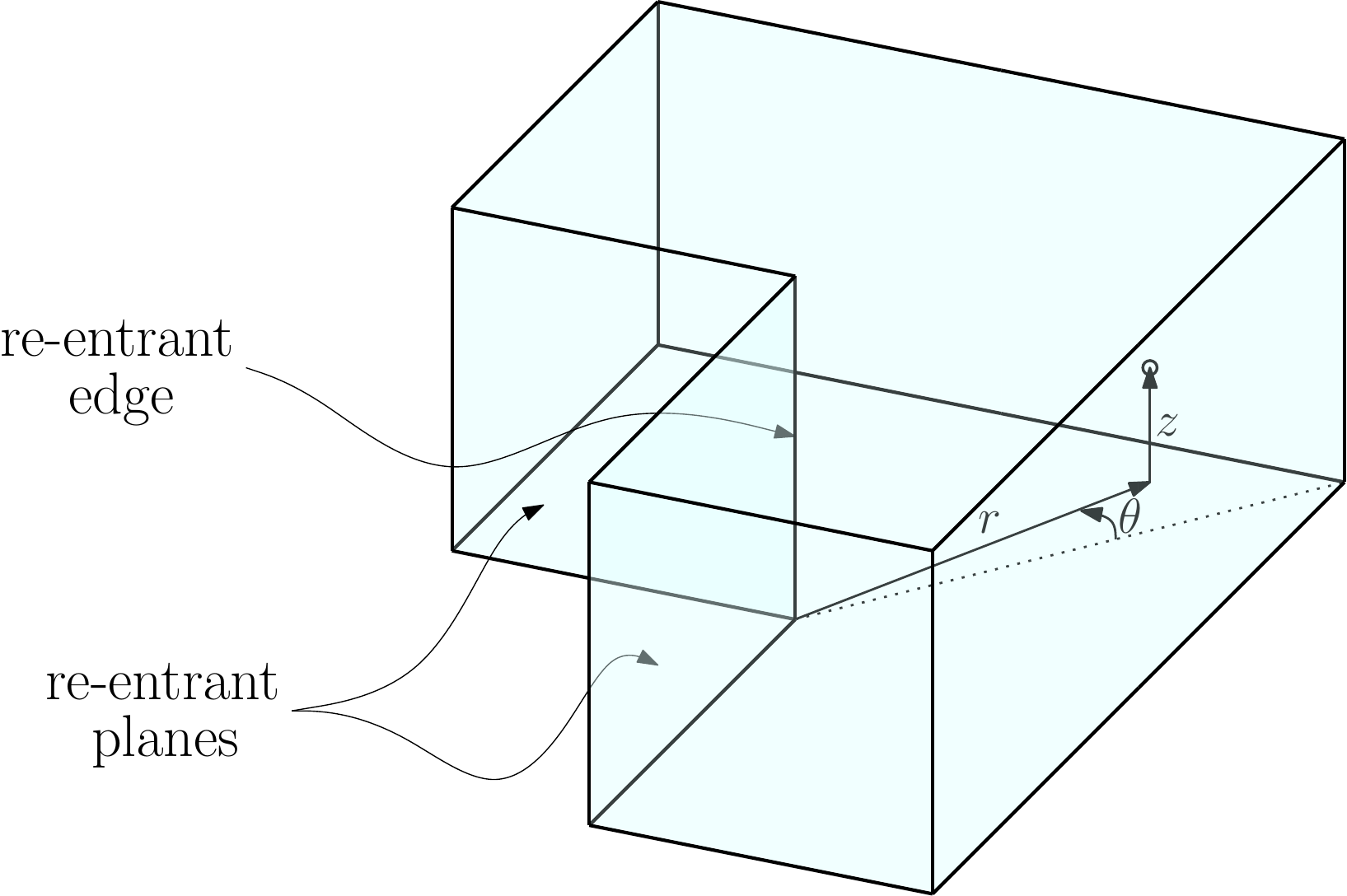}
\caption{L-shape domain in a cylindrical system of coordinates.}
\label{fig:Lshape-domain}
\end{center}
\end{figure}

As depicted in Figure \ref{fig:Lshape-domain}, we considered an L-shape domain composed of three unit cubes and a cylindrical system of coordinates, $(r,\theta,z)$, such that the re-entrant edge passes through the origin and aligns with the $z$-axis, while the re-entrant planes align with $\theta=\pm\frac{3}{4}\pi$.

Using Airy functions (see \cite{Vasilopoulos}) one can obtain general expressions for the displacement components in polar coordinates of a homogeneous isotropic elastic body in equilibrium, so that $-\div(\C:\varepsilon(u))=f=0$. 
These are
\begin{equation}
	\begin{aligned}
		u_r(r,\theta)&=\frac{1}{2\mu}r^a\Big(-(a+1)F(\theta)+(1-\nu)G'(\theta)\Big)\,,\\
		u_\theta(r,\theta)&=\frac{1}{2\mu}r^a\Big(-F'(\theta)+(1-\nu)(a-1)G(\theta)\Big)\,,\\
		u_z(r,\theta)&=0\,,
	\end{aligned}
	\label{eq:singularsoldisplacement}
\end{equation}
where $\nu=\frac{\lambda}{2(\lambda+\mu)}$ is the Poisson's ratio, $a$ is a constant, and
\begin{equation}
	\begin{aligned}
		F(\theta)&=C_1\sin((a+1)\theta)+C_2\cos((a+1)\theta)+C_3\sin((a-1)\theta)+C_4\cos((a-1)\theta)\,,\\
		G(\theta)&=-\frac{4}{a-1}\Big(C_3\cos((a-1)\theta)-C_4\sin((a-1)\theta)\Big)\,.
	\end{aligned}
\end{equation}
The nonzero stresses in polar coordinates satisfying the constitutive relation (and $\div\sigma=0$) are
\begin{equation}
	\begin{aligned}
		\sigma_{rr}(r,\theta)&=r^{a-1}\Big(F''(\theta)+(a+1)F(\theta)\Big)\,,\\
		\sigma_{\theta\theta}(r,\theta)&=a(a+1)r^{a-1}F(\theta)\,,\\
		\sigma_{r\theta}(r,\theta)&=-ar^{a-1}F'(\theta)\,,\\
		\sigma_{zz}(r,\theta)&=\lambda\tr(\epsilon(u))\,.
	\end{aligned}
	\label{eq:singularsolstress}
\end{equation}

Next, consider zero displacement boundary conditions at the re-entrant planes meaning that we want $u_r(r,\pm\frac{3}{4}\pi)=u_\theta(r,\pm\frac{3}{4}\pi)=0$.
The values of $C_1$, $C_2$, $C_3$, $C_4$ and $a$ are essentially \textit{chosen} to satisfy these boundary conditions.
Indeed, choosing $C_2=C_4=0$, $C_3=1$ and
\begin{equation}
	C_1=\frac{\Big(4(1-\nu)-(a+1)\Big)\sin\Big((a-1)\frac{3}{4}\pi\Big)}{(a+1)\sin\Big((a+1)\frac{3}{4}\pi\Big)}
	\label{eq:singularsolC1}
\end{equation}
guarantees that $u_r(r,\pm\frac{3}{4}\pi)=0$ regardless of the value of $a$.
After making this choice, the condition $u_\theta(r,\pm\frac{3}{4}\pi)=0$ becomes
\begin{equation}
	C_1(a+1)\cos\Big((a+1)\textstyle{\frac{3}{4}}\pi\Big)+\Big(4(1-\nu)+(a-1)\Big)\cos\Big((a-1)\textstyle{\frac{3}{4}}\pi\Big)=0\,.
	\label{eq:EquationForA}
\end{equation}
Moreover, since $\sigma$ has a common factor of $r^{a-1}$ it follows that $a>0$ is required to have $\sigma\in\bfL^2(\Sym)$, which
in turn implies $\sigma\in\bfH(\div;\Sym)$ in view of the intrinsic expression $\div\sigma=0$.
Furthermore, to have an actual singularity in the strains and stresses it is necessary for $a<1$.
Hence, $a$ is chosen to satisfy \eqref{eq:EquationForA}, with $a\in(0,1)$.

For steel, the Lam\'e parameters are $\lambda=123\,\mathrm{GPa}$ and $\mu=79.3\,\mathrm{GPa}$.
They yield $\nu\approx0.304$ and a constant $a\approx0.5946\in(0,1)$.
These values are used in our computations.
Regarding the boundary conditions, we impose displacement boundary conditions at the re-entrant planes, and stress (traction) boundary conditions at the other faces parallel to the $z$-axis.
The remaining two faces perpendicular to the $z$-axis are equipped with mixed boundary conditions where the displacement is restricted in the normal direction ($u_z=0$) and where the tangential components of the traction vanish.

\begin{remark}
Under averaged plane stress conditions the problem is extremely similar to the plane strain case.
The major difference is that the 2D displacements and stresses, $u_r$, $u_\theta$, $\sigma_{rr}$, $\sigma_{r\theta}$ and $\sigma_{\theta\theta}$, are actually \textit{averaged} quantities over the $z$ direction.
To solve the 2D problem for the averages simply consider the same equations as the plane strain case, but ignore $u_z$ and $\sigma_{zz}$, and change $\nu$ to $\frac{\nu}{1+\nu}$ in \eqref{eq:singularsoldisplacement}, \eqref{eq:singularsolstress}, \eqref{eq:singularsolC1} and \eqref{eq:EquationForA} (see \cite{Vasilopoulos}).
Recovering a 3D solution from the averaged quantities involves several calculations and is described in \cite[\S2.26]{CokerFilon}.
\end{remark}

\subsubsection{Uniform refinements}

The common factor of the stresses, $r^{a-1}$, actually implies that $\sigma$ is in a space of fractional order $s$, which roughly speaking corresponds to $s=1+(a-1)-\delta=a-\delta$, where $\delta>0$.
Since $a\in(0,1)$, it follows that under \textit{uniform} refinements the expected convergence rate with respect to $h$ is approximately $a$, meaning the expected convergence rate with respect to degrees of freedom $N_{\mathrm{dof}}$ is $\frac{a}{3}\approx0.1982$, regardless of the value of $p$.

\begin{figure}[!ht]
\begin{center}
\includegraphics[trim=3.4cm 0cm 1cm 0cm,clip=true,scale=0.39]{./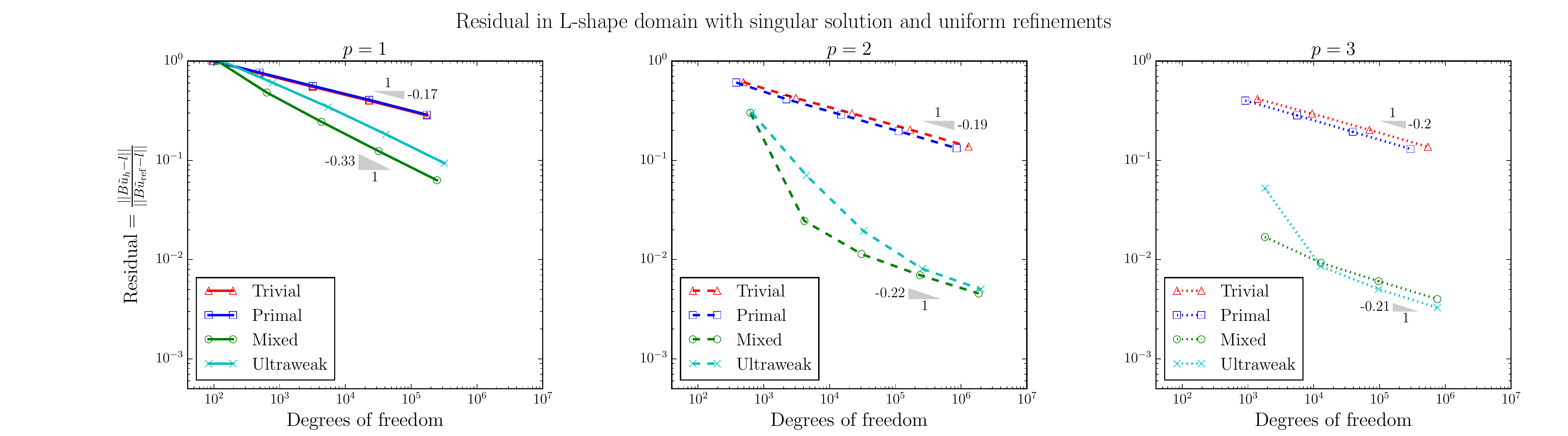}
\caption{Residual as a function of the degrees of freedom after uniform hexahedral refinements in the L-shape domain with a singular solution.}
\label{fig:Lshape-uniform}
\end{center}
\end{figure}

The uniform refinement results for the four variational formulations are presented in Figure \ref{fig:Lshape-uniform}.
As expected, the rates are very close to $\frac{a}{3}\approx0.1982$ when $p=2$ and $p=3$.
When $p=1$, the mixed and ultraweak methods seem to be converging at a higher rate (about $0.33$), but this is probably because it has not reached the asymptotic regime where it stabilizes to the expected rate.
For each formulation, as expected from the theory of minimum residual methods, the residual always goes down both when the mesh is refined for a fixed $p$ and also when $p$ is refined for a fixed mesh.
For example, the latter case is observed by looking at how the first point in the strong formulation (corresponding to the fixed initial five-element mesh) decreases in value as the order grows from $p=1$ (left plot) to $p=3$ (right plot).
This comparison is valid in the discrete setting only because a fixed value of $p+\mathrm{d}p=4$ was used to compute the residual in all cases.

\subsubsection{Adaptive refinements}

To prevent the proliferation of degrees of freedom and to have some form of theoretical background we use anisotropic refinements such that no refinements are done in the $z$ direction, where $u_z=0$.
The residual norms are calculated for each element separately as described in Section \ref{sec:adaptivity}, and the criteria for adaptivity is that those elements with local residual greater than one half of the maximum residual are refined in the directions perpendicular to $z$.
With these anisotropic adaptive refinements in place it is possible to apply the 2D results on point singularities from \cite{BabuskaAdaptive}, which imply that in the asymptotic limit the expected rate should be equivalent to that coming from a smooth solution.
That is, the rate with respect to $N_{\mathrm{dof}}$ is expected to approach $\frac{p}{3}$ in the limit.

\begin{figure}[!ht]
\begin{center}
\includegraphics[trim=3.4cm 0cm 1cm 0cm,clip=true,scale=0.39]{./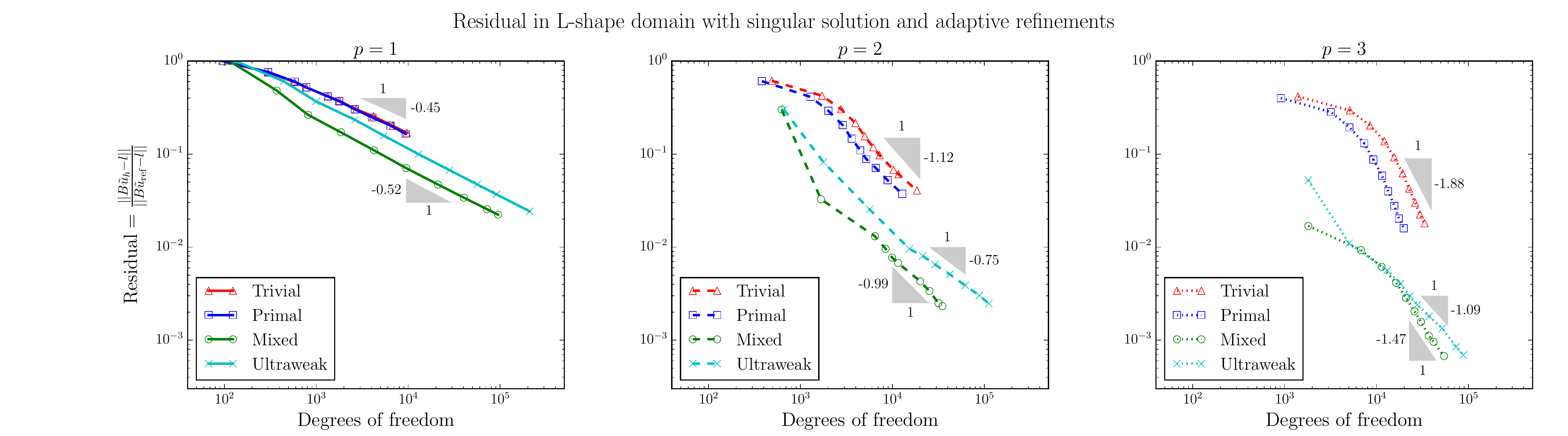}
\caption{Residual as a function of the degrees of freedom after adaptive anisotropic hexahedral refinements in the L-shape domain with a singular solution.}
\label{fig:Lshape-adaptive}
\end{center}
\end{figure}

The problem is solved successively through nine adaptive refinements with all formulations.
The results are illustrated in Figure \ref{fig:Lshape-adaptive}.
For $p=1$ the rates initially oscillate at around $0.5$, which is much better than the expected $0.33$.
This is a desirable quality, because the preasymptotic rates are faster than the expected rates.
Nevertheless, the rate would probably eventually approach the expected rate if more refinements had been taken.
Similar assertions hold for $p=2$ and $p=3$.
It is worth noting that the primal and strong formulations have very similar and consistent behaviors with respect to convergence.
On the other hand, for $p=2$ and $p=3$, the mixed and ultraweak formulations seem to have a less consistent behavior with adaptive refinements.

The adaptive refinement patterns for each of the different methods under this singular problem is interesting to analyze.
Indeed, note that for Figure \ref{fig:Lshape-adaptive} the mixed and ultraweak formulations evidence a greater growth in degrees of freedom with each adaptive step.
Figure \ref{fig:Lshape-adaptive-meshes} complements this by showing the resulting meshes for each of the methods after five refinements were performed.
As can be clearly seen, more elements have been refined with the mixed and ultraweak formulations than with the strong and primal formulations.
This is especially evident far from the re-entrant edge (where the singularity lies).
There could be many reasons for these refinement patterns, including the nature of the formulation itself and the choice of the test norm.
Indeed, the strong and primal formulations have the displacement variable, whose gradient is singular, lying in $\bfH^1$, while the two other formulations have it lying in $\bfL^2$.
This could imply that the residual is affected by those gradient terms, which leads to a much more focused pattern of refinements toward the singularity.
On the other hand, the choice of test norm is completely fundamental and can have a profound effect on the computations.
Here, we chose the standard norms.
However, other choice of norms, such as graph norms for the ultraweak formulation, might lead to radically different refinement patterns.

\begin{figure}[!ht]
\begin{center}
\includegraphics[scale=0.7]{./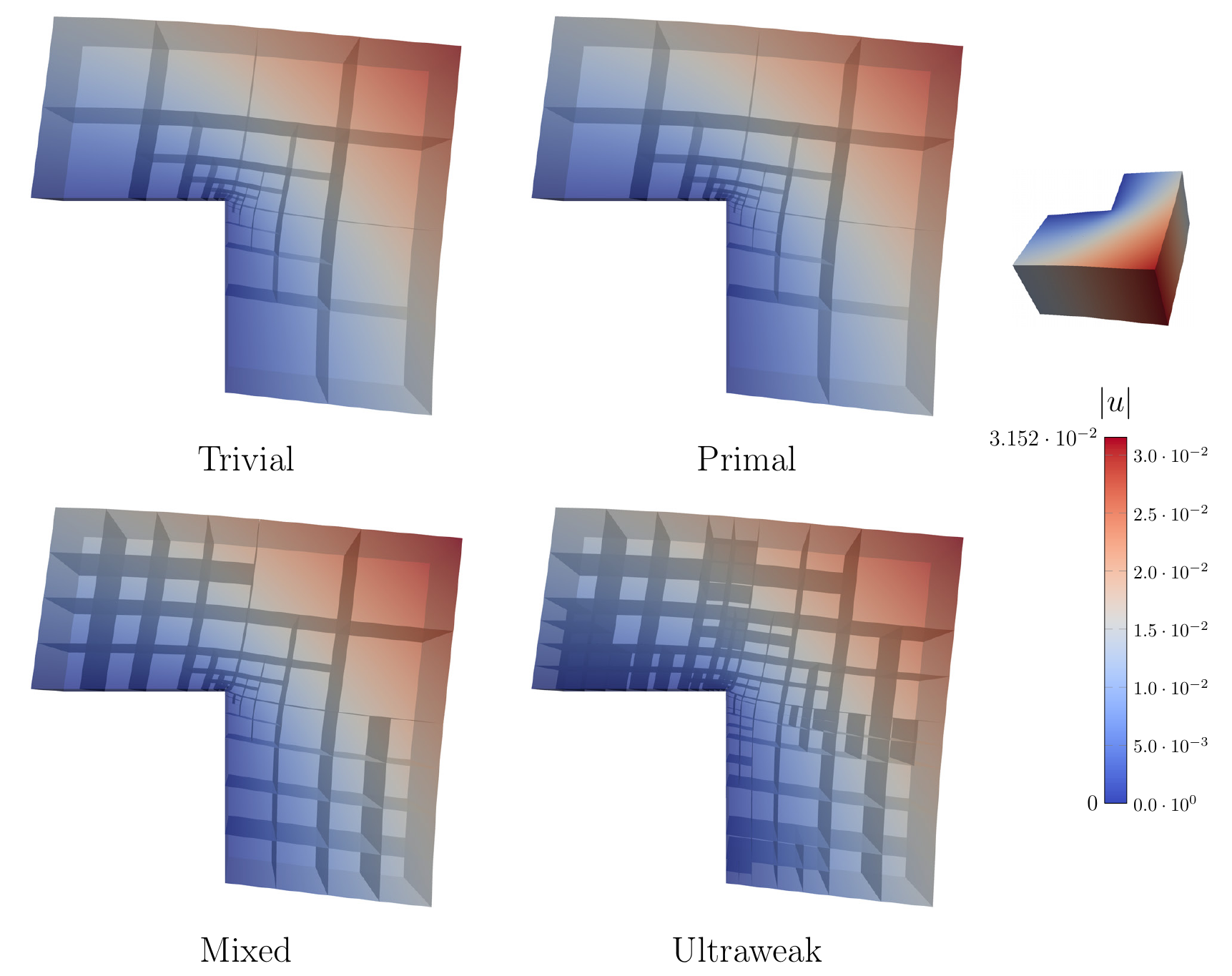}
\caption{The adaptive meshes for each method after five successive refinements. The domains are colored by the displacement magnitude, $|u|$, and warped by a factor of $10$.}
\label{fig:Lshape-adaptive-meshes}
\end{center}
\end{figure}



\section{Conclusions}
\label{sec:Conclusion}



This work was primarily a proof of concept of the DPG methodology in 3D linear elasticity.
We presented at least five different variational formulations of linear elasticity.
The formulations were exposed in both the traditional ``unbroken'' setting, and a setting with broken test spaces, which is suitable for the optimal test space DPG methodology to be applied.
The proofs of well-posedness in both settings were described.
In fact, all the ``unbroken'' formulations were proved to be mutually well-posed (see Appendix \ref{app:WellPosedness}), while the stability of the broken formulations followed from the unbroken case combined with a new theoretical framework detailed in \cite{Carstensen15} and carefully applied in this work.

Each of the formulations was numerically implemented using the DPG methodology.
In doing so, the applicability of the methodology was evidenced, since it was able to handle a wide array of variational formulations, including those where the test and trial spaces were completely different.
Moreover, a natural computation of the residual (in the context of DPG methods) was implemented for use in adaptive refinements.
The numerical results were in complete agreement with the theory, and the rates behaved as expected (or better) for different orders $p$ and with both smooth solutions \textit{and} singular solutions.
Interesting results were observed with singular solutions in relation to the adaptive refinement patterns produced by the different formulations.

In this paper, we made little attempt to speculate which formulation is better than the others.
The intention was only to show that it is viable to implement the same problem under the same methodology but with very different variational formulations.
However, that does not go without saying that some formulations may have strong advantages over others.
For example, some formulations are robustly stable in the incompressible limit while others are not.
A full comparison among formulations is a possible future endeavour.

The choices for test norms that we made were the standard norms and some future effort would also be appropriate to formulate better norms.
In particular, this could produce more desirable refinement patterns from adaptive schemes with the mixed and ultraweak formulations.

Another point of comment is that it is entirely feasible to solve a problem with different, yet compatible, variational formulations (such as the ones described in this work) in adjacent subdomains of the same domain. 
This is also left for future work.

%
%
%
%
%

\paragraph{Acknowledgements.}
The work of Keith, Fuentes, and Demkowicz was partially supported with grants by NSF (DMS-1418822),  AFOSR (FA9550-12-1-0484), and ONR (N00014-15-1-2496).

\phantomsection
\addcontentsline{toc}{section}{References}
\bibliographystyle{apalike}
\bibliography{main}

\appendix
\newpage
\section{Broken energy spaces: zero-jump lemma}
\label{app:zerojump}


The proof of Lemma \ref{lem:CharacterizationOfTraces} is presented here.

\begin{lemma}
Let $\,\Gamma_0$ and $\,\Gamma_1$ be relatively open subsets in $\bdry\Omega$ satisfying $\overline{\Gamma_0\cup\Gamma_1}=\bdry\Omega$ and $\Gamma_0\cap\Gamma_1=\varnothing$.
\begin{enumerate}[font=\upshape,label={(\roman*)},ref={\thelemma(\roman*)}]
	\item Let $v\in\bfH^1(\mesh)$. Then $v\in\bfH^1_{\Gamma_0}(\Omega)$ if and only if $\langle\hat{\tau}_\fkn,\tr_{\grad}v\rangle_{\bdry\mesh}=0$ for all $\hat{\tau}_\fkn\in\bfH^{-\frac{1}{2}}_{\Gamma_1}(\bdry\mesh)$. 
	\item Let $\tau\in\bfH(\div,\mesh)$. Then $\tau\in\bfH_{\Gamma_1}(\div,\Omega)$ if and only if $\langle\hat{u},\tr_{\div}\tau\rangle_{\bdry\mesh}=0$ for all $\hat{u}\in\bfH^{\frac{1}{2}}_{\Gamma_0}(\bdry\mesh)$. 
\end{enumerate}
\end{lemma}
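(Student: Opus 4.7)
The plan is to prove both parts by combining element-wise integration by parts (which produces the interface pairing $\langle \cdot, \cdot \rangle_{\bdry\mesh}$) with standard density and surjectivity properties of the trace operators on mixed-boundary domains. The forward direction in each part should be nearly immediate: for functions in the ``unbroken'' space, the inter-element contributions in the sum of boundary pairings telescope into a single integral over $\bdry\Omega$, which vanishes by the homogeneous traces imposed on the respective $\Gamma_i$. The reverse direction is the one carrying real content, since it must extract \emph{both} global regularity (i.e., that the piecewise gradient or divergence is a distributional one on all of $\Omega$) \emph{and} the correct boundary condition on $\Gamma_0$ or $\Gamma_1$ from a single duality pairing condition.

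For part (i), I would first handle $(\Rightarrow)$: given $v \in \bfH^1_{\Gamma_0}(\Omega)$ and any representative $\tau \in \bfH_{\Gamma_1}(\div, \Omega)$ of $\hat\tau_\fkn$, element-wise integration by parts yields
\begin{equation*}
\langle \tr_{\div}\tau,\, \tr_{\grad} v\rangle_{\bdry\mesh} \;=\; (\nabla v,\tau)_\mesh + (v,\div\tau)_\mesh \;=\; \int_{\bdry\Omega} v\cdot(\tau\cdot\fkn)\,\dd\Gamma \;=\; 0,
\end{equation*}
since $v|_{\Gamma_0}=0$ and $(\tau\cdot\fkn)|_{\Gamma_1}=0$. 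For $(\Leftarrow)$, I would define a candidate global gradient $g \in \bfL^2(\Omega;\Mat)$ by $g|_K = \nabla(v|_K)$ for each $K\in\mesh$, then test the hypothesis against compactly supported smooth $\tau$, which lie in $\bfH_{\Gamma_1}(\div,\Omega)$ and induce $\tr_{\div}\tau \in \bfH^{-\frac{1}{2}}_{\Gamma_1}(\bdry\mesh)$. The resulting identity $(g,\tau)_\Omega = -(v,\div\tau)_\Omega$ identifies $g$ as the distributional gradient of $v$ on $\Omega$, upgrading $v$ to $\bfH^1(\Omega)$. Now that the full integration-by-parts formula on $\Omega$ is available, feeding in \emph{general} $\tau \in \bfH_{\Gamma_1}(\div,\Omega)$ and comparing with the hypothesis yields
\begin{equation*}
\int_{\bdry\Omega} v\cdot(\tau\cdot\fkn)\,\dd\Gamma \;=\; 0 \qquad \text{for all } \tau\in\bfH_{\Gamma_1}(\div,\Omega),
\end{equation*}
which, after discarding the automatically vanishing $\Gamma_1$ portion, forces $v|_{\Gamma_0} = 0$. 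Part (ii) proceeds by the entirely symmetric argument, swapping the roles of $\bfH^1$ and $\bfH(\div)$ and replacing ``distributional gradient'' by ``distributional divergence''.

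The main obstacle is the last step in each $(\Leftarrow)$ argument: concluding from the vanishing of the surviving boundary pairing that the relevant trace itself vanishes. This is where one needs that the normal-trace map $\tau \mapsto (\tau\cdot\fkn)|_{\Gamma_0}$ from $\bfH_{\Gamma_1}(\div,\Omega)$ is surjective onto (a dense subspace of) the appropriate dual trace space over $\Gamma_0$, and symmetrically for part (ii). On smooth or Lipschitz domains with a partition $\overline{\Gamma_0\cup\Gamma_1}=\bdry\Omega$, $\Gamma_0\cap\Gamma_1=\varnothing$, this is standard (it underlies the consistency of the trace spaces $\bfH^{\frac{1}{2}}_{\Gamma_0}(\bdry\mesh)$ and $\bfH^{-\frac{1}{2}}_{\Gamma_1}(\bdry\mesh)$ defined in \eqref{eq:InterfaceSpaces}), but it must be invoked with care so as to be compatible with the minimum-energy-extension norms \eqref{eq:InterfaceNorms} that topologize these interface spaces.
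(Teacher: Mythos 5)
Your proposal follows essentially the same route as the paper's proof: element-wise integration by parts plus the global distributional identity for the forward direction, and for the reverse direction, first testing against compactly supported smooth fields to recover the distributional gradient/divergence and upgrade to the unbroken space, then testing against general conforming fields and using surjectivity of the trace map (equivalently, density of extendable boundary test functions supported on $\Gamma_0$) to force the boundary trace to vanish. The only cosmetic difference is that the paper realizes the final surjectivity step by explicitly constructing zero extensions $\tilde\phi\in\bfH_{\Gamma_1}(\div,\Omega)$ of smooth $\phi$ supported in $\Gamma_0$, whereas you appeal to the same fact abstractly; also note that $\int_{\bdry\Omega} v\cdot(\tau\cdot\fkn)\,\dd\Gamma$ should really be read as the $\bfH^{\frac12}$--$\bfH^{-\frac12}$ duality pairing rather than an $L^2$ integral, but this does not affect the argument.
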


\begin{proof}
We choose only to prove the first equivalence. The second equivalence is similar.

Let $v\in\bfH^1_{\Gamma_0}(\Omega)$ and $\hat{\tau}_\fkn\in\bfH^{-\frac{1}{2}}_{\Gamma_1}(\bdry\mesh)$.
By definition of $\bfH^{-\frac{1}{2}}_{\Gamma_1}(\bdry\mesh)$, there exists $\tau\in\bfH_{\Gamma_1}(\div,\Omega)$ such that $\tr_{\div}\tau=\hat{\tau}_\fkn$.
Given a domain $K$, for all $v\in\bfH^1(K)$ and $\tau\in\bfH(\div,K)$ it can be shown that the following distributional identity holds,
\begin{equation*}
  (\tau,\nabla v)_K+(\div\tau,v)_K=\langle\tr_{\div}^K\tau,\tr_{\grad}^Kv\rangle_{\bdry K}\,,
\end{equation*}
and in particular if $v\in\bfH^1_{\Gamma_0}(\Omega)$ and $\tau\in\bfH_{\Gamma_1}(\div,\Omega)$ the following identity holds,
\begin{equation*}
  (\tau,\nabla v)_\Omega+(\div\tau,v)_\Omega=\langle\tr_{\div}^\Omega\tau,\tr_{\grad}^\Omega v\rangle_{\bdry\Omega}=0\,.
\end{equation*}
Hence, rewriting the integral $(\tau,\nabla v)_\Omega+(\div\tau,v)_\Omega=0$ as a sum of integrals over each element in the mesh and using the first identity yields the result,
\begin{equation*}
	0=\sum_{K\in\mesh}(\tau,\nabla v)_K+(\div\tau,v)_K=\sum_{K\in\mesh}\langle\tr_{\div}^K\tau,\tr_{\grad}^Kv\rangle_{\bdry K}
		=\langle\hat{\tau}_\fkn,\tr_{\grad}v\rangle_{\bdry\mesh}\,.
\end{equation*}



For the converse assume $v\in\bfH^1(\mesh)$, so that $v|_K\in\bfH^1(K)$ for any $K\in\mesh$ and let $\hat{\tau}_\fkn\in\bfH^{-\frac{1}{2}}_{\Gamma_1}(\bdry\mesh)$, so that there exists $\tau\in\bfH_{\Gamma_1}(\div,\Omega)$ satisfying $\tr_{\div}\tau=\hat{\tau}_\fkn$.
Define $w$ such that $w|_K=\nabla (v|_K)$, meaning that $w\in\bfL^2(\Omega)$.
Then, using the hypothesis and the distributional identities gives,
\begin{equation*}
	0=\langle\hat{\tau}_\fkn,\tr_{\grad}v\rangle_{\bdry\mesh}=\sum_{K\in\mesh}(\tau,\nabla (v|_K))_K+(\div\tau,v|_K)_K
		=(\tau,w)_\Omega+(\div\tau,v)_\Omega\,.
\end{equation*}
In particular, for any smooth test function $\tau$, it holds that $(w,\tau)_\Omega=-(v,\div\tau)_\Omega$.
This means $w=\nabla v$ is the distributional derivative of $v$, so that $v\in\bfH^1(\Omega)$.
Next, let $\phi$ be a smooth test function defined on $\Gamma_0$ and with support in $\Gamma_0$, so that its zero extension to $\bdry\Omega$ satisfies that $\tr_{\div}^\Omega\tilde{\phi}\in\tr_{\div}^\Omega(\bfH_{\Gamma_1}(\div,\Omega))$, where $\tilde{\phi}\in\bfH_{\Gamma_1}(\div,\Omega)$ and $\tr_{\div}^\Omega\tilde{\phi}|_{\Gamma_0}=\phi$.
By definition of distributional restriction and the previous equality, it follows
\begin{equation*}
	\langle\phi,\tr_{\grad}^\Omega v|_{\Gamma_0}\rangle_{\Gamma_0}
 		=\langle\tr_{\div}^\Omega\tilde{\phi},\tr_{\grad}^\Omega v\rangle_{\bdry\Omega}
 			=(\tilde{\phi},\nabla v)_\Omega+(\div\tilde{\phi},v)_\Omega=0\,,
\end{equation*}
where the first distributional identity was utilized.
This is true for all smooth test functions $\phi$, implying $\tr_{\grad}^\Omega v|_{\Gamma_0}=0$, so that $v\in\bfH^1_{\Gamma_0}(\Omega)$.
\end{proof}

\section{Mutual well-posedness}
\label{app:WellPosedness}

The goal is to prove Theorem \ref{thm:mutuallywellposed}.
Throughout this section we assume $\Omega\subseteq\R^3$ is a three-dimensional bounded simply connected domain with a Lipschitz boundary $\bdry\Omega=\overline{\Gamma_0\cup\Gamma_1}$, where $\Gamma_0$ and $\Gamma_1$ are disjoint and relatively open in $\bdry\Omega$.
Note the results hold in two and one-dimensional domains as well.


Recall the variational formulations labeled as $(\scS)$, $(\scU)$, $(\scD)$, $(\scM)$ and $(\scP)$ (see \eqref{eq:TrivialFormulation}--\eqref{eq:PrimalFormulation}). 
The idea is to show these formulations are mutually ill or well-posed.
The concept of well-posedness is in the sense of Hadamard.
Well-posedness and stability estimates are proved using the well-known result by Babu\v{s}ka and Ne\v{c}as.

\begin{theorem}[Babu\v{s}ka-Ne\v{c}as]
Let $X$ and $Y$ be Hilbert spaces over a fixed field $\mathbb{F}\in\{\R,\mathbb{C}\}$, $\ell:Y\to\mathbb{F}$ be a continuous linear form and $b_0:X\times Y \to \mathbb{F}$ be a continuous bilinear form if $\,\mathbb{F}=\R$ or sesquilinear form if $\,\mathbb{F}=\mathbb{C}$.
If there exists an inf-sup constant $\gamma>0$ such that for all $x\in X$,
\begin{equation*}
	\sup_{y\in Y\setminus\{0\}} \frac{|b_0(x,y)|}{\|y\|_Y}\geq\gamma\|x\|_X\,,
\end{equation*}
and $\ell$ satisfies the compatibility condition
\begin{equation*}
	\ell(y)=0\,\text{ for all }\,y\in Y_{00}=\{y\in Y \mid b_0(x,y)=0 \,\text{ for all }\,x\in X\}\,,
\end{equation*}
then the problem
\begin{flalign*}
\qquad
 \left\{
  \begin{aligned}
   &\text{Find }\,x\in X,\\
   & b_0(x,y) = \ell(y)\,, \quad \text{for all } \,y\in Y\,,
  \end{aligned}
 \right. &&
\end{flalign*}
is well-posed, so that there exists a unique solution $x$ satisfying the stability estimate $\|x\|_X\leq\frac{1}{\gamma}\|\ell\|_{Y'}$.
\end{theorem}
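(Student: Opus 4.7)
The plan is to recast the variational equation as an operator equation and use a closed-range / Hahn--Banach argument. Define the bounded operator $B : X \to Y'$ by $\langle Bx, y \rangle_{Y' \times Y} = b_0(x,y)$. The problem asks for $x \in X$ with $Bx = \ell$, so the desired conclusions amount to three claims: (a) $B$ is injective, (b) $\ell$ lies in $\mathrm{Range}(B)$, and (c) $B^{-1}$ (restricted to its domain) has norm at most $1/\gamma$.

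Claims (a) and (c) follow at once from the hypothesis, which is exactly the statement
\begin{equation*}
\|Bx\|_{Y'} \,=\, \sup_{y \in Y \setminus \{0\}} \frac{|\langle Bx, y\rangle|}{\|y\|_Y} \,=\, \sup_{y \in Y \setminus \{0\}} \frac{|b_0(x,y)|}{\|y\|_Y} \,\geq\, \gamma \|x\|_X.
\end{equation*}
This gives injectivity, the stability bound $\|x\|_X \leq \gamma^{-1}\|\ell\|_{Y'}$ once existence is in hand, and also that $\mathrm{Range}(B)$ is closed in $Y'$: if $\{Bx_n\}$ is Cauchy in $Y'$, the lower bound forces $\{x_n\}$ to be Cauchy in $X$, and then continuity of $B$ identifies the limits. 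For (b), I would argue by contradiction. Suppose $\ell$ satisfies the compatibility condition but $\ell \notin \mathrm{Range}(B)$. Since $\mathrm{Range}(B)$ is closed, Hahn--Banach produces a functional in $Y''$ that vanishes on $\mathrm{Range}(B)$ but not at $\ell$. Because $Y$ is a Hilbert space and hence reflexive, this functional is represented by some $y_0 \in Y$ satisfying $\langle Bx, y_0 \rangle = 0$ for every $x \in X$, i.e.\ $b_0(x,y_0)=0$ for all $x$, so $y_0 \in Y_{00}$. The compatibility hypothesis then gives $\ell(y_0) = 0$, contradicting the choice of $y_0$. Hence $\ell \in \mathrm{Range}(B)$, existence follows, and uniqueness and the stability estimate come from (a) and (c).

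The main obstacle is the existence half of the argument: turning a one-sided inf-sup bound into the full range characterization $\mathrm{Range}(B) = Y_{00}^{\circ}$ requires either the closed range theorem or, as above, Hahn--Banach together with reflexivity of the Hilbert space $Y$. In the sesquilinear case one has to keep track that $B$ is conjugate-linear in one slot, but this does not disturb norms, annihilators, or closedness arguments, so the proof carries over verbatim after fixing a convention for $Y'$ as continuous (anti)linear functionals.
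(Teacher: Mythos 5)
The paper does not supply a proof of this theorem; it is stated as the well-known Babu\v{s}ka--Ne\v{c}as result and used as a black box in Appendix~B, so there is no ``paper's own proof'' to compare against. On its own merits, your argument is correct and is the standard one: the inf-sup condition says precisely $\|Bx\|_{Y'}\ge\gamma\|x\|_X$, which simultaneously gives injectivity, the stability bound, and (via the lower-bound argument you sketch) closedness of $\mathrm{Range}(B)$; then Hahn--Banach plus reflexivity of the Hilbert space $Y$ identifies the annihilator of $\mathrm{Range}(B)$ in $Y''$ with $J(Y_{00})$, so the compatibility condition $\ell|_{Y_{00}}=0$ forces $\ell\in\mathrm{Range}(B)$. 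Two small remarks. First, since $Y$ is Hilbert you could bypass Hahn--Banach and reflexivity entirely: identify $Y'\cong Y$ via the Riesz map, note $\mathrm{Range}(B)$ is a closed subspace of that Hilbert space, and use orthogonal projection to produce a nonzero element of $\mathrm{Range}(B)^\perp\cong Y_{00}$ pairing nontrivially with $\ell$ if $\ell\notin\mathrm{Range}(B)$; this is the more elementary route in the Hilbert setting and is also what the paper implicitly leans on elsewhere (it uses the Riesz map heavily in Section~4). Second, your closing remark on the sesquilinear convention is the right caveat: with the paper's convention $B$ is antilinear into $Y'$ (or, equivalently, linear into the antidual), and the lower bound, closed-range argument, and annihilator identification all go through unchanged once one fixes a consistent convention, exactly as you say.
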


For a given variational formulation $(\#)$, the corresponding bilinear form, linear form, spaces and constants are added a superscript $\#$. 
Indeed, $X^{\#}$, $Y^\#$ and $b_0^\#$ for the five variational formulations are
\begin{align}
	&\begin{aligned}
		&X^\scS=\bfH_{\Gamma_1}(\div,\Omega)\times\bfH^1_{\Gamma_0}(\Omega)\,,\qquad\qquad
			Y^\scS=\bfL^2(\Omega;\Sym)\times\bfL^2(\Omega)\times\bfL^2(\Omega;\Skw)\,,\\
		&b_0^\scS((\sigma,u),(\tau,v,w))=(\sigma,\tau)_\Omega-(\C:\nabla u,\tau)_\Omega-(\div\sigma,v)_\Omega+(\sigma,w)_\Omega\,,
	\end{aligned}\displaybreak[2]\\[2mm]
	&\begin{aligned}
		&X^\scU=\bfL^2(\Omega;\Sym)\times\bfL^2(\Omega)\times\bfL^2(\Omega;\Skw)\,,\qquad\qquad
			Y^\scU=\bfH_{\Gamma_1}(\div,\Omega)\times\bfH^1_{\Gamma_0}(\Omega)\,,\\
		&b_0^\scU((\sigma,u,\omega),(\tau,v))=(\A:\sigma,\tau)_\Omega+(\omega,\tau)_\Omega+(u,\div\tau)_\Omega+(\sigma,\nabla v)_\Omega\,,
	\end{aligned}\displaybreak[2]\\[2mm]
	&\begin{aligned}
		&X^\scD=\bfL^2(\Omega;\Sym)\times\bfH^1_{\Gamma_0}(\Omega)\,,\qquad\qquad
			Y^\scD=\bfL^2(\Omega;\Sym)\times\bfH^1_{\Gamma_0}(\Omega)\,,\\
		&b_0^\scD((\sigma,u),(\tau,v))=(\sigma,\tau)_\Omega-(\C:\nabla u,\tau)_\Omega+(\sigma,\nabla v)_\Omega\,,
	\end{aligned}\displaybreak[2]\\[2mm]
	&\begin{aligned}
		&X^\scM=\bfH_{\Gamma_1}(\div,\Omega)\times\bfL^2(\Omega)\times\bfL^2(\Omega;\Skw)\,,\qquad\qquad
			Y^\scM=\bfH_{\Gamma_1}(\div,\Omega)\times\bfL^2(\Omega)\times\bfL^2(\Omega;\Skw)\,,\\
		&b_0^\scM((\sigma,u,\omega),(\tau,v,w))=(\A:\sigma,\tau)_\Omega+(\omega,\tau)_\Omega+(u,\div\tau)_\Omega-(\div\sigma,v)_\Omega
			+(\sigma,w)_\Omega\,,
	\end{aligned}\displaybreak[2]\\[2mm]
	&\begin{aligned}
		&X^\scP=\bfH^1_{\Gamma_0}(\Omega)\,,\qquad\qquad
			Y^\scP=\bfH^1_{\Gamma_0}(\Omega)\,,\\
		&b_0^\scP(u,v)=(\C:\nabla u,\nabla v)_\Omega\,.
	\end{aligned}	
\end{align}

Additionally, consider $(\scS_\Sym)$, $(\scU_\Sym)$ and $(\scM_\Sym)$, which are new variational formulations using the space $\bfH_{\Gamma_1}(\div,\Omega;\Sym)$ as opposed to $\bfH_{\Gamma_1}(\div,\Omega)$.
Their defining spaces and forms are
\begin{align}
	&\begin{aligned}
		&X^{\scS_\Sym}=\bfH_{\Gamma_1}(\div,\Omega;\Sym)\times\bfH^1_{\Gamma_0}(\Omega)\,,\qquad\qquad
			Y^{\scS_\Sym}=\bfL^2(\Omega;\Sym)\times\bfL^2(\Omega)\,,\\
		&b_0^{\scS_\Sym}((\sigma,u),(\tau,v))=(\sigma,\tau)_\Omega-(\C:\nabla u,\tau)_\Omega-(\div\sigma,v)_\Omega\,,\qquad
			\ell^{\scS_\Sym}((\tau,v))=(f,v)_\Omega\,,
	\end{aligned}\displaybreak[2]\\[2mm]
	&\begin{aligned}
		&X^{\scU_\Sym}=\bfL^2(\Omega;\Sym)\times\bfL^2(\Omega)\,,\qquad\qquad\qquad\quad\,
			Y^{\scU_\Sym}=\bfH_{\Gamma_1}(\div,\Omega;\Sym)\times\bfH^1_{\Gamma_0}(\Omega)\,,\\
		&b_0^{\scU_\Sym}((\sigma,u),(\tau,v))=(\A:\sigma,\tau)_\Omega+(u,\div\tau)_\Omega+(\sigma,\nabla v)_\Omega\,,\qquad
			\ell^{\scU_\Sym}((\tau,v))=(f,v)_\Omega\,,
	\end{aligned}\displaybreak[2]\\[2mm]
	&\begin{aligned}
		&X^{\scM_\Sym}=\bfH_{\Gamma_1}(\div,\Omega;\Sym)\times\bfL^2(\Omega)\,,\qquad\quad\quad\,\,
			Y^{\scM_\Sym}=\bfH_{\Gamma_1}(\div,\Omega;\Sym)\times\bfL^2(\Omega)\,,\\
		&b_0^{\scM_\Sym}((\sigma,u),(\tau,v))=(\A:\sigma,\tau)_\Omega+(u,\div\tau)_\Omega-(\div\sigma,v)_\Omega\,,\quad
			\ell^{\scM_\Sym}((\tau,v))=(f,v)_\Omega\,.
	\end{aligned}
\end{align}


The proof of mutual well-posedness is discussed in two parts.
First, the mutual satisfaction of the compatibility conditions is analyzed.
Second, the inf-sup constants are also shown to be mutually satisfied.

Throughout, note that the proofs only hold in the compressible regime.
Here, $\C$ and $\A$ are inverse to each other over $\Sym$.
This is no longer true in the incompressible case (in the limit of $\lambda\to\infty$), where only the variational formulations that make use of $\A$ can be proved to remain well-posed.

\subsection{Compatibility conditions}

Well-posedness of the variational formulations depends on the nature of $\Gamma_0$ and $\Gamma_1$.
The first lemma shows that $\Gamma_0\neq\varnothing$ is a necessary condition for all variational formulations to be well-posed.
The condition is also sufficient, and this is the content of Corollary \ref{cor:allwellposed}.

\begin{lemma}
\label{lem:Gammazeronotempty}
Suppose one of the variational formulations among $(\scS)$, $(\scU)$, $(\scD)$, $(\scM)$, $(\scP)$, $(\scS_\Sym)$, $(\scU_\Sym)$ and $(\scM_\Sym)$ is well-posed.
Then $\Gamma_0\neq\varnothing$.
\end{lemma}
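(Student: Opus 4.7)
The plan is to prove the contrapositive: assuming $\Gamma_0 = \varnothing$, I will show that each of the eight formulations admits a nontrivial element in the kernel of its bilinear form on the trial side, which directly rules out the Babu\v{s}ka--Ne\v{c}as inf-sup condition.

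The key object is the six-dimensional space of infinitesimal rigid displacements
\begin{equation*}
\Rfunc = \{v \in \bfH^1(\Omega) : \varepsilon(v) = 0\} = \{x \mapsto a + b\times x : a,b\in\R^3\}.
\end{equation*}
When $\Gamma_0=\varnothing$ one has $\bfH^1_{\Gamma_0}(\Omega) = \bfH^1(\Omega) \supseteq \Rfunc$, so $\Rfunc$ is a nontrivial subspace of the admissible displacements. Moreover, for $u \in \Rfunc$ the gradient $\nabla u$ is skew symmetric, whence $\C:\nabla u = 0$ because $\C|_{\Skw}=0$ by construction.

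For the \emph{primal} formulation $(\scP)$ one takes any $u \in \Rfunc\setminus\{0\}$ and observes $b_0^\scP(u,v) = (\C:\nabla u, \nabla v)_\Omega = 0$ for every test $v$. For the \emph{strong} and \emph{dual mixed} formulations $(\scS)$, $(\scD)$, $(\scS_\Sym)$ one selects the pair $(\sigma,u) = (0, u)$ with $u\in\Rfunc\setminus\{0\}$; every term in $b_0$ involves either $\sigma$ or $\C:\nabla u$, both of which vanish. For the \emph{ultraweak} and \emph{mixed} formulations $(\scU)$, $(\scM)$, $(\scU_\Sym)$, $(\scM_\Sym)$, whose trial spaces include $u\in\bfL^2(\Omega)$, I take $\sigma=0$, $u\in\Rfunc\subseteq\bfL^2(\Omega)$ and, where present in the trial space, $\omega=\nabla u\in\bfL^2(\Omega;\Skw)$. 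Then $b_0$ reduces to $(\omega,\tau)_\Omega + (u,\div\tau)_\Omega$ tested against $\tau\in\bfH_{\Gamma_1}(\div,\Omega)$ (or its symmetric counterpart). Since $\Gamma_1 = \partial\Omega$ when $\Gamma_0=\varnothing$, every such $\tau$ has vanishing normal trace on all of $\partial\Omega$, so integration by parts gives $(u,\div\tau)_\Omega = -(\nabla u,\tau)_\Omega = -(\omega,\tau)_\Omega$ and the two terms cancel.

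In every case the constructed element is nonzero in $X^\#$ yet annihilates $b_0^\#(\cdot,y)$ for all $y\in Y^\#$, so the inf-sup constant $\gamma$ of Babu\v{s}ka--Ne\v{c}as is forced to be $0$, contradicting well-posedness. The argument is essentially routine once the correct kernel element is identified; the only mildly delicate step is checking the ultraweak/mixed cases, where one must confirm that the boundary term in the integration by parts truly disappears because, with $\Gamma_0=\varnothing$, the condition $\tau\cdot\fkn_{\Gamma_1}=0$ kills the surface contribution and makes $(0,u,\omega)$ (or $(0,u)$ in the strongly symmetric version) an exact kernel element. No compatibility condition on the data enters, since the obstruction is pure trial-side degeneracy.
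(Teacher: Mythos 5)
Your argument is correct and follows essentially the same route as the paper's: exhibit a nonzero trial-side kernel element (equivalently, a second solution to the same data), which defeats well-posedness. The only material difference is your choice of kernel element. The paper uses a single translation, a constant $u_C$, so $\nabla u_C = 0$ kills every relevant term at once with no case analysis, and the distributional identity $(u_C,\div\tau)_\Omega = -(\nabla u_C,\tau)_\Omega = 0$ immediately disposes of the ultraweak and mixed cases. You instead take the full six-dimensional space of rigid body motions, which forces the extra bookkeeping of setting $\omega = \nabla u$ in $(\scU)$ and $(\scM)$ and invoking orthogonality of the skew $\nabla u$ against symmetric $\tau$ in $(\scU_\Sym)$ and $(\scM_\Sym)$. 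Both are valid; the paper's is just a little more economical, while yours identifies the full kernel rather than a single nontrivial element.
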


\begin{proof}
Assume the hypothesis so that the well-posed variational formulation has a unique solution $x$, whose component $u$ is the displacement solution variable.
By contradiction assume $\Gamma_0=\varnothing$.
Then any translation (constant) $u_C$ satisfies the boundary conditions vacuously and $\nabla u_C=0$.
For the variational formulations  $(\scS)$, $(\scS_\Sym)$, $(\scD)$ and $(\scP)$ it is straightforward that, ceteris paribus, the solution $x_C$ with displacement component $u+u_C$ is a different solution (provided $u_C\neq0$) to the original problem.
Similarly, since $u_C\in\bfH^1_{\Gamma_0}(\Omega)$ and $\nabla u_C=0$, the distributional identity yields $(u_C,\div\tau)_\Omega=-(\nabla u_C,\tau)_\Omega=0$ for all $\tau\in\bfH_{\Gamma_1}(\div,\Omega)$, so that $x_C$ is also a different solution to the variational formulations $(\scU)$, $(\scU_\Sym)$, $(\scM)$ and $(\scM_\Sym)$.
This contradicts that the original solution was unique.
\end{proof}

The next lemma shows that the solution to the original elasticity equation, \eqref{eq:LinElast}, with homogeneous forcing and boundary conditions ($f=0$, $u_0=0$ and $g=0$) is $u=0$ and is unique provided $\Gamma_0\neq\varnothing$.

\begin{lemma}
\label{lem:homogeneousuniquesolution}
Suppose $\Gamma_0\neq\varnothing$ and consider the equation $-\div(\C:\varepsilon(u))=0$ in $\Omega$, where $u$ is sought in $\bfH^1_{\Gamma_0}(\Omega)$ and $\C:\varepsilon(u)\in\bfH_{\Gamma_1}(\div,\Omega)$. 
Then $u=0$ is the unique solution to the problem.
\end{lemma}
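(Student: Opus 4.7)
The plan is to use an energy argument. I would test the PDE $-\div(\C:\varepsilon(u)) = 0$ against $u$ itself, which is admissible since $u \in \bfH^1_{\Gamma_0}(\Omega)$. Applying the distributional duality pairing between $\bfH_{\Gamma_1}(\div,\Omega)$ and $\bfH^1_{\Gamma_0}(\Omega)$ (well-defined precisely because $\C:\varepsilon(u)$ lies in the former and $u$ in the latter, so the boundary traces pair to zero on both $\Gamma_0$ and $\Gamma_1$), I would obtain
\begin{equation*}
  0 = -(\div(\C:\varepsilon(u)),u)_\Omega = (\C:\varepsilon(u),\nabla u)_\Omega = (\C:\varepsilon(u),\varepsilon(u))_\Omega,
\end{equation*}
where in the last step I use that $\C:\varepsilon(u) \in \Sym$ while $\nabla u - \varepsilon(u) \in \Skw$, so the skew part contributes nothing.

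Next I would invoke the pointwise positive definiteness of $\C$ on $\Sym$: for the isotropic stiffness tensor with $\lambda, \mu > 0$, one has $\tau:\C:\tau \geq 2\mu\,|\tau|^2$ for every $\tau \in \Sym$. Combined with the vanishing energy, this forces $\varepsilon(u) = 0$ almost everywhere in $\Omega$.

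The next step is the classical characterization of the kernel of the symmetric gradient: any $u \in \bfH^1(\Omega)$ with $\varepsilon(u) = 0$ is an infinitesimal rigid displacement, i.e.\ $u(x) = a + Wx$ for some $a \in \R^3$ and some skew-symmetric $W$. I would cite this as a standard result (it is, e.g., the content of the kernel computation used in Korn's inequality on simply connected Lipschitz domains).

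Finally, I would conclude using the boundary condition. Since $\Gamma_0$ is relatively open and nonempty in $\bdry\Omega$, it contains an open subset of the Lipschitz boundary and therefore contains at least three affinely independent points (indeed, infinitely many). A rigid motion $x \mapsto a + Wx$ that vanishes on such a set must satisfy $a = 0$ and $W = 0$, hence $u \equiv 0$. The main (minor) obstacle is being careful that the integration-by-parts identity genuinely applies at the stated regularity; this is exactly the distributional identity $(\tau,\nabla v)_\Omega + (\div\tau,v)_\Omega = \langle\tr_{\div}^\Omega\tau,\tr_{\grad}^\Omega v\rangle_{\bdry\Omega}$ already used in Appendix~\ref{app:zerojump}, which vanishes here because $\tr_{\div}^\Omega\tau|_{\Gamma_1} = 0$ and $\tr_{\grad}^\Omega v|_{\Gamma_0} = 0$.
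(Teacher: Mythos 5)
Your proof is correct, and it takes a genuinely different route from the paper's. The paper first tests against an arbitrary $v\in\bfH^1_{\Gamma_0}(\Omega)$ to land on the homogeneous primal formulation $(\scP)$, and then invokes coercivity of $b_0^\scP$ on $\bfH^1_{\Gamma_0}(\Omega)$ — a consequence of Korn's inequality combined with $\Gamma_0\neq\varnothing$, cited from Ciarlet — so that $b_0^\scP(u,u)=0$ forces $\|u\|_{\bfH^1(\Omega)}=0$ outright. You instead test against $u$ directly, use only the \emph{pointwise} positive definiteness of $\C$ on $\Sym$ (here $\tau:\C:\tau\geq 2\mu|\tau|^2$) to conclude $\varepsilon(u)=0$ a.e., then invoke the characterization of $\ker\varepsilon$ as infinitesimal rigid displacements, and finally observe that a rigid displacement vanishing on the relatively open set $\Gamma_0\subseteq\bdry\Omega$ (which contains three non-collinear points) must be identically zero. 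Both ultimately rely on the same deep inputs — the paper hides the rigid-motion kernel computation inside the Korn-based coercivity it cites, while you surface it explicitly — but your version needs only the qualitative kernel fact rather than the quantitative coercivity constant, so it is in that sense more elementary and makes clearer exactly where $\Gamma_0\neq\varnothing$ enters. Your handling of the integration-by-parts step is also correct: the duality pairing $\langle\tr_{\div}^\Omega(\C:\varepsilon(u)),\tr_{\grad}^\Omega u\rangle_{\bdry\Omega}$ vanishes because the two traces are supported on complementary parts of the boundary, and the reduction $(\C:\varepsilon(u),\nabla u)_\Omega=(\C:\varepsilon(u),\varepsilon(u))_\Omega$ is exactly the orthogonality of $\Sym$ and $\Skw$ used throughout the paper.
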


\begin{proof}
Multiplying the equation by a test function $v\in\bfH^1_{\Gamma_0}(\Omega)$, integrating and using a distributional identity yields the equation $\int_\Omega \nabla u : \C : \nabla v \dd\Omega=0$ for all $v\in\bfH^1_{\Gamma_0}(\Omega)$, which is precisely the formulation $(\scP)$ with $f=0$.
Using Korn's inequality and that $\Gamma_0\neq\varnothing$, the bilinear form can be shown to be coercive, meaning $b_0^\scP(u,u)=\int_\Omega \nabla u : \C : \nabla u \dd\Omega\geq\alpha\|u\|_{\bfH^1(\Omega)}^2$ for some $\alpha>0$ \cite{Ciarlet13}.
Taking $v=u$, the equation becomes $b_0^\scP(u,u)=0$, and using coercivity it implies $\|u\|_{\bfH^1(\Omega)}=0$, so that $u=0$ is the only solution.
\end{proof}

Finally, it is shown that given $\Gamma_0\neq\varnothing$, the compatibility condition is satisfied trivially for every variational formulation.

\begin{lemma}
\label{lem:trivialcompatibility}
Let $\Gamma_0\neq\varnothing$.
Then the variational formulations $(\scS)$, $(\scU)$, $(\scD)$, $(\scM)$, $(\scP)$, $(\scS_\Sym)$, $(\scU_\Sym)$ and $(\scM_\Sym)$ all have a trivial compatibility space, implying that the compatibility conditions are satisfied trivially for any linear form. 
\end{lemma}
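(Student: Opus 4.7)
The plan is to prove $Y_{00}^\# = \{0\}$ for each formulation $\#$, which makes the compatibility condition $\ell|_{Y_{00}^\#}=0$ hold vacuously. The unifying idea is that, given $y \in Y_{00}^\#$, one probes $b_0^\#(x,y)=0$ with cleverly chosen trial variables $x$ so as to recover from the components of $y$ the ingredients of a homogeneous elasticity solution, which must then vanish either by Korn's inequality directly or by Lemma~\ref{lem:homogeneousuniquesolution}.

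I would begin with the two easy cases. For $(\scP)$, testing $(\C:\nabla u,\nabla v)_\Omega = 0$ with $u = v$ and invoking Korn (valid since $\Gamma_0 \neq \varnothing$) gives $v = 0$ at once. For $(\scD)$, I would set $u = 0$ and vary $\sigma \in \bfL^2(\Omega;\Sym)$ to deduce $\tau = -\varepsilon(v)$ (the inner product with a symmetric test only sees the symmetric part of $\tau + \nabla v$); substituting back into the identity with $\sigma = 0$ reduces to the primal case, whence $v = 0$ and $\tau = 0$.

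For the strong, mixed, and ultraweak cases---together with their symmetric-$\bfH(\div)$ variants---I would follow a three-step recipe. First, probe the bilinear form with compactly supported smooth $\sigma$ to extract, in the sense of distributions, an identity of the form $\nabla v = -(\A{:}\tau + w)$ (or $\nabla v = -(\tau + w)$ in the strong case), thereby promoting $v$ to $\bfH^1(\Omega)$ with prescribed symmetric and skew parts of its gradient. Second, switch to a general test $\sigma \in \bfH_{\Gamma_1}(\div,\Omega)$ and combine the resulting identity with the integration-by-parts formula and Lemma~\ref{lem:H1Subspace} (applied with the trivial mesh $\mesh = \{\Omega\}$) to further promote $v$ to $\bfH^1_{\Gamma_0}(\Omega)$; in $(\scU)$ this is automatic since $v$ already lives in that space. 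Third, in the ultraweak and mixed cases use auxiliary tests against $u \in \bfL^2(\Omega)$ and $\omega \in \bfL^2(\Omega;\Skw)$ to conclude $\div\tau = 0$ and vanishing of the skew part of $\tau$, so that $\tau \in \bfH_{\Gamma_1}(\div,\Omega;\Sym)$; then inverting $\A{:}\tau = -\varepsilon(v)$ via $\C$ (valid on $\Sym$ in the compressible regime) yields $\C:\varepsilon(v) \in \bfH_{\Gamma_1}(\div,\Omega)$ with $\div(\C:\varepsilon(v)) = 0$, and Lemma~\ref{lem:homogeneousuniquesolution} forces $v = 0$, whereupon all remaining components of $y$ collapse. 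In the strong case one instead substitutes $\tau = -\varepsilon(v)$ into the $\sigma = 0$ identity and tests with $u = v$, reducing once more to Korn's inequality.

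The main obstacle will be the bookkeeping around the symmetric/antisymmetric split forced by the convention $\C|_\Skw = \A|_\Skw = 0$. In particular, inverting $\A{:}\tau = -\varepsilon(v)$ to $\tau = -\C:\varepsilon(v)$ requires first killing the skew part of $\tau$ through a separate test against $\omega$, and one must carefully track whether the boundary condition on $v$ is supplied directly by the ambient test space or has to be extracted through Lemma~\ref{lem:CharacterizationOfTraces}.
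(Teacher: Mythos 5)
Your proposal matches the paper's proof: probe $b_0^\#(x,y)=0$ with increasingly general trial data $x$ to recover from $y$ a homogeneous elasticity problem for $v$, then invoke Lemma~\ref{lem:homogeneousuniquesolution} (itself a consequence of Korn's inequality) to force $v=0$ and collapse the remaining components. The only cosmetic difference is that in $(\scS)$ you test with general nonsymmetric compactly supported $\sigma$ and read off $\nabla v = -(\tau+w)$ at once, whereas the paper tests with symmetric $\sigma$, recovers only $\varepsilon(v) = -\tau$, and then invokes Korn to promote $v$ to $\bfH^1(\Omega)$; both routes are sound and lead to the same conclusion.
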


\begin{proof}
First consider $(\scS)$.
The aim is to prove $Y_{00}^\scS=\{0\}$.
Let $x=(\sigma,u)\in X^\scS$, with $u=0$ and $\sigma$ being any smooth symmetric matrix field vanishing at the boundary.
The condition $b_0^\scS(x,y)=0$ then becomes $\int_\Omega\tau:\sigma-v\cdot\div\sigma\dd\Omega=0$, which yields the distributional equality $-\varepsilon(v)=\tau\in\bfL^2(\Omega;\Sym)$.
By Korn's inequality, $v\in\bfH^1(\Omega)$, and further testing against $\sigma\in\bfH_{\Gamma_1}(\div,\Omega;\Sym)$ yields additionally that $v\in\bfH^1_{\Gamma_0}(\Omega)$.
Next, test with $\sigma=0$ and $u\in\bfH^1_{\Gamma_0}(\Omega)$, so that $b_0^\scS(x,y)=0$ yields $\int_\Omega\nabla u:\C:\nabla v\dd\Omega=0$, which can be rewritten as $-\div(\C:\varepsilon(v))=0$.
By Lemma \ref{lem:homogeneousuniquesolution}, $v=0$, meaning $\tau=-\varepsilon(v)=0$.
Finally, $b_0^\scS(x,y)=0$ becomes $\int_\Omega \sigma:w\dd\Omega=0$ when testing with $\sigma\in\bfH_{\Gamma_1}(\div,\Omega)$ (nonsymmetric), which results in $w=0$ as well.
Therefore $y=(0,0,0)$ is the only element of $Y_{00}^\scS$.

Next consider $(\scU)$ and the condition $b_0^\scU(x,y)=0$ for all $x=(u,\sigma,\omega)\in X^\scU$.
First let $\sigma=0$ and $u=0$, so that the condition becomes $\int_\Omega\omega:\tau\dd\Omega=0$.
Therefore, the antisymmetric part of $\tau$ vanishes, meaning $\tau\in\bfH_{\Gamma_1}(\div,\Omega;\Sym)$.
Then, with $\sigma=0$, the condition becomes $\int_\Omega u\cdot\div\tau\dd\Omega=0$, so that $\div\tau=0$.
Finally, test with $u=0$, so that the condition yields the equation $\A:\tau+\varepsilon(v)=0$, which can be rewritten as $\tau=-\C:\varepsilon(v)$.
Taking the divergence and using $\div\tau=0$ gives $-\div(\C:\varepsilon(v))=0$, which by Lemma \ref{lem:homogeneousuniquesolution} results in $v=0$ and $\tau=-\C:\varepsilon(v)=0$.
Hence, $Y_{00}^\scU=\{0\}$.

Similar calculations follow for $(\scD)$, $(\scM)$, $(\scP)$, $(\scS_\Sym)$, $(\scU_\Sym)$ and $(\scM_\Sym)$. 
\end{proof}

When $\Gamma_0=\varnothing$ it is possible to redefine some spaces by considering the quotient over a particular null space (e.g.~rigid body motions).
This essentially produces new closely related, yet modified variational formulations which are well-posed even when $\Gamma_0=\varnothing$.
Indeed, after redefining these spaces, a relevant version of Korn's inequality can be proved to hold \cite[Theorem 2.3]{CiarletKorns10}. 
However, in this work we chose not to redefine those spaces.

\subsection{Inf-sup constants}

Before proceeding to the main result, the most challenging results are proved as three independent lemmas.
The closed range theorem in the closed operator setting plays a key role in two of these lemmas, while the remaining lemma uses the Rellich-Kondrachov theorem to prove a relevant Poincar\'{e}-type inequality.

\begin{lemma}
\label{lem:closedrangesymmetric}
The formulations $(\scS_\Sym)$ and $(\scU_\Sym)$ are mutually ill or well-posed.
\end{lemma}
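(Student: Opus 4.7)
The plan is to reduce the mutual well-posedness of $(\scS_\Sym)$ and $(\scU_\Sym)$ to a single statement about a closed unbounded operator on $H=\bfL^2(\Omega;\Sym)\times\bfL^2(\Omega)$ and then apply the closed range theorem.

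First, I would dispose of the compatibility conditions: by Lemma~\ref{lem:Gammazeronotempty}, well-posedness of either formulation forces $\Gamma_0\neq\varnothing$, which I assume throughout; Lemma~\ref{lem:trivialcompatibility} then gives $Y_{00}^{\scS_\Sym}=Y_{00}^{\scU_\Sym}=\{0\}$, so the Babu\v{s}ka--Ne\v{c}as criterion reduces in each case to the inf-sup condition alone. Next, I would recast $(\scS_\Sym)$ as the bijectivity of the closed densely defined operator
\begin{equation*}
	L:D(L)=\bfH_{\Gamma_1}(\div,\Omega;\Sym)\times\bfH^1_{\Gamma_0}(\Omega)\to H,\qquad L(\sigma,u)=(\sigma-\C:\varepsilon(u),-\div\sigma).
\end{equation*}
The Riesz isomorphism $(Y^{\scS_\Sym})'\cong H$ realizes $B^{\scS_\Sym}$ as $L$, and Korn's inequality on $\bfH^1_{\Gamma_0}(\Omega)$ equates the graph norm of $L$ with the natural norm on $D(L)$, so well-posedness of $(\scS_\Sym)$ is exactly the statement that $L$ is a Hilbert-space isomorphism from $D(L)$ onto $H$.

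The heart of the argument is the computation of the Hilbert adjoint $L^*$ followed by the closed range theorem. Pairing $L(\sigma,u)$ against $(\tau,v)\in H$ and integrating by parts---using $u|_{\Gamma_0}=0$ and $\sigma\fkn|_{\Gamma_1}=0$ to annihilate boundary terms, and the symmetry of $\sigma$ to replace $(\sigma,\nabla v)_\Omega$ by $(\sigma,\varepsilon(v))_\Omega$---would yield
\begin{equation*}
	L^*(\tau,v)=(\tau+\varepsilon(v),\div(\C:\tau)),\quad D(L^*)=\{(\tau,v)\in H\mid \C:\tau\in\bfH_{\Gamma_1}(\div,\Omega;\Sym),\ v\in\bfH^1_{\Gamma_0}(\Omega)\}.
\end{equation*}
The closed range theorem for closed densely defined operators between Hilbert spaces then delivers the key equivalence: $L$ is bijective if and only if $L^*$ is bijective. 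Finally, to tie $L^*$ to $(\scU_\Sym)$, I would rewrite $b_0^{\scU_\Sym}((\sigma,u),(\tau,v))=((\sigma,u),M(\tau,v))_H$ with $M(\tau,v)=(\A:\tau+\varepsilon(v),\div\tau)$ (using the symmetry of $\sigma$ once more), so that $B^{\scU_\Sym}$ is the Riesz-transpose of the bounded operator $M:Y^{\scU_\Sym}\to H$; hence well-posedness of $(\scU_\Sym)$ is equivalent to bijectivity of $M$. Because $\A$ is a bounded bijection of $\Sym$ with inverse $\C$, the substitution $\tau\mapsto\A:\tau$ is a topological automorphism of $\bfH_{\Gamma_1}(\div,\Omega;\Sym)$ that intertwines $M$ with $L^*$ through $M=L^*\circ(\A\times\mathrm{id})$, so $M$ and $L^*$ are simultaneously bijective, closing the chain of equivalences.

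The hard part will be the precise identification of $D(L^*)$. The apparent regularity requirement $\div(\C:\tau)\in\bfL^2$ is strictly stronger than $\div\tau\in\bfL^2$---for isotropic $\C$ it enforces $\nabla\tr(\tau)\in\bfL^2$ on top of $\div\tau\in\bfL^2$---and extracting both this regularity and the complementary trace condition $(\C:\tau)\fkn|_{\Gamma_1}=0$ from the distributional identity defining $D(L^*)$ requires care. This ``$\C$-twist'' in $D(L^*)$ is precisely what the intertwining substitution in the last step undoes. Throughout, the argument uses essentially that $\C$ and $\A$ are mutually inverse bounded bijections of $\Sym$, which fails in the incompressible limit $\lambda\to\infty$; this is consistent with the known fact that only formulations using $\A$ remain well-posed in that regime.
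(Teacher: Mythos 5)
Your proposal is correct and follows essentially the same route as the paper: the paper likewise recasts both problems as closed unbounded operators on $L=\bfL^2(\Omega;\Sym)\times\bfL^2(\Omega)$ (its $A_\scS$ and $A_\scU$), disposes of compatibility via Lemmas~\ref{lem:Gammazeronotempty} and~\ref{lem:trivialcompatibility}, identifies $A_\scS^*=A_\scU\mathsf{M}$ with $\mathsf{M}=\bigl(\begin{smallmatrix}\C&0\\0&I\end{smallmatrix}\bigr)$ — precisely your intertwining $M=L^*\circ(\A\times\mathrm{id})$ — and invokes the closed range theorem, carrying the inf-sup constant through the duality. One small slip to fix: $\tau\mapsto\A:\tau$ is \emph{not} an automorphism of $\bfH_{\Gamma_1}(\div,\Omega;\Sym)$ (for isotropic $\C$, $\div(\A:\tau)\in\bfL^2$ would force $\nabla\tr\tau\in\bfL^2$, which $\div\tau\in\bfL^2$ does not give); what your argument actually needs, and implicitly uses, is that $\A\times\mathrm{id}$ is a topological isomorphism from $\bfH_{\Gamma_1}(\div,\Omega;\Sym)\times\bfH^1_{\Gamma_0}(\Omega)$ onto the $\C$-twisted $D(L^*)$ you correctly identified.
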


\begin{proof}
Assume $(\scS_\Sym)$ is well-posed, so the compatibility conditions are satisfied and $\gamma^{\scS_\Sym}>0$ exists.
Then by Lemma \ref{lem:Gammazeronotempty}, $\Gamma_0\neq\varnothing$.
Using Lemma \ref{lem:trivialcompatibility}, it follows $Y_{00}^{\scS_\Sym}=\{0\}$ and $Y_{00}^{\scU_\Sym}=\{0\}$ so the compatibility conditions are satisfied for $(\scS_\Sym)$ and $(\scU_\Sym)$.
It remains to show the existence of $\gamma^{\scU_\Sym}>0$.

The first step is to recognize the underlying linear operators in $(\scS_\Sym)$ and $(\scU_\Sym)$.
Indeed, for $x=(\sigma,u)$ and $y=(\tau,v)$,
\begin{equation*}
	b_0^{\scS_\Sym}(x,y)=(A_\scS x,y)_\Omega\,,\qquad\qquad b_0^{\scU_\Sym}(x,y)=(x,A_\scU y)_\Omega\,,
\end{equation*}
where
\begin{gather*}
	A_\scS:X^{\scS_\Sym}\rightarrow Y^{\scS_\Sym}\,,\qquad\qquad\qquad A_\scU:Y^{\scU_\Sym}\rightarrow X^{\scU_\Sym}\,,\\
	A_\scS x=\begin{pmatrix}I & -\C:\varepsilon \\ -\div & 0 \end{pmatrix} \begin{pmatrix} \sigma \\ u \end{pmatrix}\,,\qquad\qquad
	A_\scU y=\begin{pmatrix}\A & \varepsilon \\ \div & 0 \end{pmatrix} \begin{pmatrix} \tau \\ v \end{pmatrix}\,.
\end{gather*}
Define $L=Y^{\scS_\Sym}=X^{\scU_\Sym}=\bfL^2(\Omega;\Sym)\times\bfL^2(\Omega)$ and $\mD=X^{\scS_\Sym}=Y^{\scU_\Sym}=\bfH_{\Gamma_1}(\div,\Omega;\Sym)\times\bfH^1_{\Gamma_0}(\Omega)$.
In the topology of $L$, $\mD$ is dense in $L$, and $A_\scS$ and $A_\scU$ are well-defined closed operators.
Meanwhile, if $\mD$ is suited with a graph norm such as $\|x\|_{A_\scS}^2=\|x\|_{L}^2+\|A_\scS x\|_{L}^2$ or $\|y\|_{A_\scU}^2=\|y\|_{L}^2+\|A_\scU y\|_{L}^2$ or with the standard norm $\|(\sigma,u)\|_\mD^2=\|\sigma\|_{\bfH(\div,\Omega)}^2+\|u\|_{\bfH^1(\Omega)}^2$, the operators $A_\scS$ and $A_\scU$ are well-defined continuous operators.
It can be shown that the norms $\|\cdot\|_{A_\scS}$, $\|\cdot\|_{A_\scU}$ and $\|\cdot\|_\mD$ are equivalent.

Moreover, both operators are injective.
Indeed, suppose $A_\scS x=0$ and $A_\scU y=0$ for $x=(\sigma,u)$ and $y=(\tau,v)$, so that $\sigma-\C:\varepsilon(u)=0$, $-\div\sigma=0$, $\A:\tau+\varepsilon(v)=0$ and $\div\tau=0$.
These can be rewritten as $-\div(\C:\varepsilon(u))=0$ and $-\div(\C:\varepsilon(v))=0$ respectively, and, since $\Gamma_0\neq\varnothing$, by Lemma \ref{lem:homogeneousuniquesolution}, $u=v=0$ and $\sigma=\tau=0$, so that $x=y=0$.

Next, notice the closed operator adjoints of $A_\scS$ and $A_\scU$ are closely related.
In fact, $A_\scS^*=A_\scU\mathsf{M}$ and $A_\scU^*=\mathsf{M}^{-1}A_\scS$, where $\mathsf{M}=(\begin{smallmatrix} \C & 0 \\ 0 & I \end{smallmatrix}):L\to L$ is an invertible bounded linear operator with continuous inverse $\mathsf{M}^{-1}=(\begin{smallmatrix} \A & 0 \\ 0 & I \end{smallmatrix}):L\to L$.
Therefore, both $A_\scS^*$ and $A_\scU^*$ are injective.

Now, using that $(\scS_\Sym)$ is well-posed, it follows there exists $\gamma^{\scS_\Sym}>0$ such that for all $x\in\mD$
\begin{equation*}
	\|A_\scS x\|_L=\sup_{y\in L\setminus\{0\}}\frac{|(A_\scS x,y)_\Omega|}{\|y\|_L}
		=\sup_{y\in L\setminus\{0\}}\frac{|b_0^{\scS_\Sym}(x,y)|}{\|y\|_L}\geq\gamma^{\scS_\Sym}\|x\|_{\mD}\geq\gamma^{\scS_\Sym}\|x\|_L\,.
\end{equation*}
Using the closed range theorem for closed operators along with the injectivity of $A_\scS$ and $A_\scS^*$, it follows that $A_\scS$ and $A_\scS^*$ are surjective, so that $A_\scS$ and $A_\scU=A_\scS^*\mathsf{M}^{-1}$ are bijective, and for all $y\in\mD$
\begin{equation*}
	\|A_\scU y\|_L=\|A_\scS^*\mathsf{M}^{-1}y\|_L\geq\gamma^{\scS_\Sym}\|\mathsf{M}^{-1}y\|_L
		\geq\textstyle{\frac{\gamma^{\scS_\Sym}}{\|\mathsf{M}\|}}\|y\|_L\,,
\end{equation*}
where it was used that $\|y\|_L\leq\|\mathsf{M}\|\|\mathsf{M}^{-1}y\|_L$, where $\|\mathsf{M}\|$ is the operator norm of $\mathsf{M}$. 
Squaring the inequality and adding $C_\scS^2\|A_\scU y\|_L^2$ on both sides, where $C_\scS=\frac{\gamma^{\scS_\Sym}}{\|\mathsf{M}\|}$, yields for all $y\in\mD$
\begin{equation*}
	\|A_\scU y\|_L\geq\sqrt{\textstyle{\frac{C_\scS^2}{1+C_\scS^2}}}\|y\|_{A_\scU}
		\geq C_{\scU\mD}\sqrt{\textstyle{\frac{C_\scS^2}{1+C_\scS^2}}}\|y\|_\mD\,,
\end{equation*}
where $C_{\scU\mD}$ is the relevant equivalence constant between the norms $\|\cdot\|_{A_\scU}$ and $\|\cdot\|_\mD$.
Let $\gamma^{\scU_\Sym}>0$ be defined by $(\gamma^{\scU_\Sym})^2=\frac{C_{\scU\mD}^2C_\scS^2}{1+C_\scS^2}$.
Since $A_\scU:\mD\to L$ is bijective, it follows it is invertible with inverse $A_\scU^{-1}:L\to\mD$, which is continuous (by the open mapping theorem) when $\mD$ is viewed as a normed space.
The continuous operator transpose $(A_\scU^{-1})':\mD'\to L'=L$ therefore exists, and by its properties it follows its operator norm is $\|(A_\scU^{-1})'\|=\|A_\scU^{-1}\|$.
Moreover, $(A_\scU^{-1})'=(A_\scU')^{-1}$ where $A_\scU':L'=L\to\mD'$ is the continuous operator transpose of $A_\scU$ satisfying $(x,A_\scU y)_\Omega=\langle A_\scU'x,y\rangle_{\mD'\times\mD}$ for all $x\in L$ and $y\in\mD$.
Hence,
\begin{equation*}
	\begin{aligned}
		\gamma^{\scU_\Sym}\leq\inf_{y\in\mD\setminus\{0\}}&\frac{\|A_\scU y\|_L}{\|y\|_\mD}
			=\Big(\sup_{y\in\mD\setminus\{0\}}\frac{\|y\|_\mD}{\|A_\scU y\|_L}\Big)^{-1}
				=\Big(\sup_{x\in L\setminus\{0\}}\frac{\|A_\scU^{-1} x\|_\mD}{\|x\|_L}\Big)^{-1}=\frac{1}{\|A_\scU^{-1}\|}\\
					&=\frac{1}{\|(A_\scU^{-1})'\|}=\frac{1}{\|(A_\scU')^{-1}\|}=\inf_{x\in L\setminus\{0\}}\frac{\|A_\scU' x\|_{\mD'}}{\|x\|_L}
						=\inf_{x\in L\setminus\{0\}}\sup_{y\in\mD\setminus\{0\}}
							\frac{|\langle A_\scU'x,y\rangle_{\mD'\times\mD}|}{\|x\|_L\|y\|_\mD}\\
								&=\inf_{x\in L\setminus\{0\}}\sup_{y\in\mD\setminus\{0\}}\frac{|(x,A_\scU y)_\Omega|}{\|x\|_L\|y\|_\mD}
									=\inf_{x\in L\setminus\{0\}}\sup_{y\in\mD\setminus\{0\}}\frac{|b_0^{\scU_\Sym}(x,y)|}{\|x\|_L\|y\|_\mD}\,.
	\end{aligned}
\end{equation*}
This shows the existence of $\gamma^{\scU_\Sym}>0$ satisfying the desired property, meaning $(\scU_\Sym)$ is well-posed.

Similar calculations show that if $(\scU_\Sym)$ is well-posed then $(\scS_\Sym)$ is well-posed.
\end{proof}


\begin{lemma}
\label{lem:poincarelike}
Let $\Gamma_0\neq\varnothing$.
There exists a constant $C_P>0$ such that for all $u\in\bfH^1_{\Gamma_0}(\Omega)$ and $\omega\in\bfL^2(\Omega;\Skw)$,
\begin{equation*}
	\sqrt{\|u\|_{\bfL^2(\Omega)}^2+\|\omega\|_{\bfL^2(\Omega;\Skw)}^2}\leq C_P\|-\nabla u+\omega\|_{\bfL^2(\Omega;\Mat)}\,.
\end{equation*}
\end{lemma}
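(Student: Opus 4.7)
The plan is to reduce the inequality to Korn's second inequality via the orthogonal decomposition of matrices into symmetric and skew-symmetric parts. The crucial observation is that, since $\omega\in\bfL^2(\Omega;\Skw)$ is skew, the symmetric part of $-\nabla u+\omega$ is exactly $-\varepsilon(u)$, while its skew part is $\omega-\mathrm{skw}(\nabla u)$, where $\mathrm{skw}(\nabla u)=\tfrac12(\nabla u-\nabla u^{\T})$. Because $\Mat=\Sym\oplus\Skw$ orthogonally (under the Frobenius inner product $A:B$), this yields the identity
\begin{equation*}
	\|-\nabla u+\omega\|_{\bfL^2(\Omega;\Mat)}^2
		=\|\varepsilon(u)\|_{\bfL^2(\Omega;\Sym)}^2+\|\omega-\mathrm{skw}(\nabla u)\|_{\bfL^2(\Omega;\Skw)}^2\,.
\end{equation*}

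Next, I would invoke Korn's second inequality, whose hypothesis $\Gamma_0\neq\varnothing$ is granted: there exists $C_K>0$ such that $\|u\|_{\bfH^1(\Omega)}\leq C_K\|\varepsilon(u)\|_{\bfL^2(\Omega;\Sym)}$ for every $u\in\bfH^1_{\Gamma_0}(\Omega)$. This immediately yields both $\|u\|_{\bfL^2(\Omega)}\leq C_K\|\varepsilon(u)\|_{\bfL^2(\Omega;\Sym)}$ and $\|\mathrm{skw}(\nabla u)\|_{\bfL^2(\Omega;\Skw)}\leq\|\nabla u\|_{\bfL^2(\Omega;\Mat)}\leq C_K\|\varepsilon(u)\|_{\bfL^2(\Omega;\Sym)}$.

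Combining these with the triangle inequality gives $\|\omega\|_{\bfL^2(\Omega;\Skw)}\leq\|\omega-\mathrm{skw}(\nabla u)\|_{\bfL^2(\Omega;\Skw)}+C_K\|\varepsilon(u)\|_{\bfL^2(\Omega;\Sym)}$. Squaring, using $(a+b)^2\leq 2a^2+2b^2$, and adding the estimate for $\|u\|_{\bfL^2(\Omega)}^2$ yields
\begin{equation*}
	\|u\|_{\bfL^2(\Omega)}^2+\|\omega\|_{\bfL^2(\Omega;\Skw)}^2
		\leq(3C_K^2)\|\varepsilon(u)\|_{\bfL^2(\Omega;\Sym)}^2+2\|\omega-\mathrm{skw}(\nabla u)\|_{\bfL^2(\Omega;\Skw)}^2\,,
\end{equation*}
which by the orthogonal identity above is bounded by $\max(3C_K^2,2)\|-\nabla u+\omega\|_{\bfL^2(\Omega;\Mat)}^2$. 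Setting $C_P=\sqrt{\max(3C_K^2,2)}$ completes the proof.

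\textbf{Main obstacle.} There is no real obstacle once one identifies the symmetric/skew decomposition; the work is entirely packaged inside Korn's inequality. The only subtlety is that the paper's hint mentions Rellich--Kondrachov, so one could equivalently run a contradiction argument: suppose sequences $(u_n,\omega_n)$ with $\|u_n\|_{\bfL^2}^2+\|\omega_n\|_{\bfL^2}^2=1$ and $\|-\nabla u_n+\omega_n\|_{\bfL^2}\to 0$; extract the symmetric part to get $\varepsilon(u_n)\to 0$, apply Korn (which itself is the standard venue for Rellich--Kondrachov) to conclude $u_n\to 0$ in $\bfH^1$, and then deduce $\omega_n\to 0$ from the skew part, contradicting normalization. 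Either route reduces the problem to Korn's inequality on $\bfH^1_{\Gamma_0}(\Omega)$.
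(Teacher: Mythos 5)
Your proof is correct, and it takes a genuinely different route from the paper's. Both arguments hinge on the same key observation — that since $\omega$ is skew, the symmetric part of $-\nabla u+\omega$ is exactly $-\varepsilon(u)$ — but you exploit it directly via the orthogonal decomposition $\Mat=\Sym\oplus\Skw$ and invoke the coercive form of Korn's inequality $\|u\|_{\bfH^1(\Omega)}\leq C_K\|\varepsilon(u)\|_{\bfL^2(\Omega;\Sym)}$ on $\bfH^1_{\Gamma_0}(\Omega)$ (valid because $\Gamma_0\neq\varnothing$, as the paper itself uses inside Lemma~\ref{lem:homogeneousuniquesolution}). This gives an explicit constant $C_P=\sqrt{\max(3C_K^2,2)}$. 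The paper instead runs a compactness argument: normalize a putative counterexample sequence, observe $\varepsilon(u_n)\to 0$ by extracting the symmetric part, use Rellich--Kondrachov to pass to an $\bfL^2$-convergent subsequence, identify the limit as zero via Lemma~\ref{lem:homogeneousuniquesolution}, then upgrade to $\bfH^1$-convergence using Korn and deduce $\omega_n\to 0$ from the skew part, contradicting the normalization. Your version is shorter and quantitative; the paper's version is perhaps preferred by the authors because it is self-contained modulo Lemma~\ref{lem:homogeneousuniquesolution} and Rellich--Kondrachov, without directly citing the strong (coercive) Korn inequality. You correctly anticipated this in your final remark — the two routes are equivalent in substance, with the compactness argument essentially re-deriving the coercive Korn estimate in situ.
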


\begin{proof}
Let $L=\bfL^2(\Omega)\times\bfL^2(\Omega;\Skw)$ and $\|\cdot\|_L$ be its Hilbert norm.
Suppose by contradiction that such constant $C_P$ does not exist.
Then, for every $n\in\N$ there exists $(\tilde{u}_n,\tilde{\omega}_n)\in\bfH^1_{\Gamma_0}(\Omega)\times\bfL^2(\Omega;\Skw)$ such that
\begin{equation*} 
	\|(\tilde{u}_n,\tilde{\omega}_n)\|_L>n\|-\nabla \tilde{u}_n+\tilde{\omega}_n\|_{\bfL^2(\Omega;\Mat)}\,.
\end{equation*}
Let $(u_n,\omega_n)=\frac{1}{\|(\tilde{u}_n,\tilde{\omega}_n)\|_L}(\tilde{u}_n,\tilde{\omega}_n)$ so that $\|(u_n,\omega_n)\|_L=1$ and $\|-\nabla u_n+\omega_n\|_{\bfL^2(\Omega;\Mat)}<\frac{1}{n}$ for all $n\in\N$.
Note $(\omega_n)_{n\in\N}\subseteq\bfL^2(\Omega;\Skw)$ is antisymmetric so taking the symmetric part of the previous inequality yields $\|\varepsilon(u_n)\|_{\bfL^2(\Omega;\Sym)}\leq\|-\nabla u_n+\omega_n\|_{\bfL^2(\Omega;\Mat)}<\frac{1}{n}$ for all $n\in\N$.
Moreover, $\|\omega_n\|_{\bfL^2(\Omega;\Skw)}\leq\|(u_n,\omega_n)\|_L=1$, $\|u_n\|_{\bfL^2(\Omega)}\leq1$ and $\|\nabla u_n\|_{\bfL^2(\Omega;\Mat)}\leq\|-\nabla u_n+\omega_n\|_{\bfL^2(\Omega;\Mat)}+\|\omega_n\|_{\bfL^2(\Omega;\Skw)}\leq2$, so $\|u_n\|_{\bfH^1(\Omega)}\leq\sqrt{5}$ for all $n\in\N$ and by the Rellich-Kondrachov theorem it follows there exists a subsequence convergent to some $u$ in $\bfL^2(\Omega)$, $\lim_{k\to\infty}\|u_{n_k}-u\|_{\bfL^2(\Omega)}=0$.
Then $\varepsilon(u_{n_k})$ converges to $\varepsilon(u)$ as distributions, which in turn implies $\varepsilon(u)=0$.
Thus, $-\div(\C:\varepsilon(u))=0$ and by Lemma \ref{lem:homogeneousuniquesolution} it follows $u=0$.
Using Korn's inequality yields $\lim_{k\to\infty}\|u_{n_k}\|_{\bfH^1(\Omega)}=0$, so that in particular $(\nabla u_{n_k})_{k\in\N}$  converges to $\nabla u=0$ in $\bfL^2(\Omega;\Mat)$ and as a result $(\omega_{n_k})_{k\in\N}$ converges to $\omega=0$ in $\bfL^2(\Omega;\Skw)$ as well.
Recalling that $\|(u_{n_k},\omega_{n_k})\|_L=1$, leads to $\|(u_{n_k},\omega_{n_k})-(u,\omega)\|_L\geq|\|(u_{n_k},\omega_{n_k})\|_L-\|(u,\omega)\|_L|=1$ for all $k\in\N$ and this contradicts that $(u_{n_k},\omega_{n_k})_{k\in\N}$ is convergent to $(u,\omega)=(0,0)$ in $L$.
\end{proof}

The next lemma proves an inf-sup condition which is the same as one of the Brezzi conditions for $(\scM)$ \cite{mixedelas3d}.
It presents an alternate proof to that provided in \cite{arnold2006finite,falk2008} and uses the closed range theorem as opposed to differential forms.

\begin{lemma}
\label{lem:brezzicondition}
Let $\Gamma_0\neq\varnothing$.
There exists a constant $C_B>0$ such that for all $u\in\bfL^2(\Omega)$ and $\omega\in\bfL^2(\Omega;\Skw)$,
\begin{equation*}
	C_B\sqrt{\|u\|_{\bfL^2(\Omega)}^2+\|\omega\|_{\bfL^2(\Omega;\Skw)}^2}
		\leq \sup_{\tau\in\bfH_{\Gamma_1}(\div,\Omega)\setminus\{0\}}
			\frac{|(u,\div\tau)_\Omega+(\omega,\tau)_\Omega|}{\|\tau\|_{\bfH(\div,\Omega)}}\,.
\end{equation*}
\end{lemma}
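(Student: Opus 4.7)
My plan is to recast the inequality as a surjectivity statement and then apply the closed range theorem in a Hilbert space setting. Let $L=\bfL^2(\Omega)\times\bfL^2(\Omega;\Skw)$ endowed with its product norm, and introduce the bounded linear operator
\begin{equation*}
T:\bfH_{\Gamma_1}(\div,\Omega)\to L,\qquad T\tau=\big(\div\tau,\,\tfrac{1}{2}(\tau-\tau^\T)\big).
\end{equation*}
Because $\omega\in\bfL^2(\Omega;\Skw)$ pairs only with the skew-symmetric part of $\tau$, the right-hand side of the stated inequality is precisely the dual norm of the functional $\tau\mapsto((u,\omega),T\tau)_L$, i.e.\ the norm of $T^*(u,\omega)$ in the dual of $\bfH_{\Gamma_1}(\div,\Omega)$. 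By the closed range theorem, the desired inf-sup bound is equivalent to $T$ being surjective together with a uniform norm bound on a right-inverse.

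The main step, and the key construction, is to build such a right-inverse from the well-posedness of the primal formulation $(\scP)$ granted by Corollary \ref{cor:allwellposed}. Given arbitrary $(g,\eta)\in L$, I will solve the auxiliary primal problem: find $z\in\bfH^1_{\Gamma_0}(\Omega)$ such that
\begin{equation*}
(\C:\varepsilon(z),\varepsilon(v))_\Omega=(g,v)_\Omega+(\eta,\nabla v)_\Omega,\quad\text{for all }v\in\bfH^1_{\Gamma_0}(\Omega).
\end{equation*}
The right-hand side is a continuous linear functional on $\bfH^1_{\Gamma_0}(\Omega)$ whose norm is controlled by $\|g\|_{\bfL^2(\Omega)}+\|\eta\|_{\bfL^2(\Omega;\Skw)}$, so Corollary \ref{cor:allwellposed} yields a unique $z$ with $\|z\|_{\bfH^1(\Omega)}\lesssim\|g\|_{\bfL^2(\Omega)}+\|\eta\|_{\bfL^2(\Omega;\Skw)}$.

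I then set $\tau:=-\C:\varepsilon(z)+\eta$ and verify that $T\tau=(g,\eta)$. The identity $\tfrac{1}{2}(\tau-\tau^\T)=\eta$ holds pointwise since $\C:\varepsilon(z)$ is symmetric. A standard distributional computation, using the variational equation with compactly supported test functions, gives $\div\tau=g$ in $\bfL^2(\Omega)$, whence $\tau\in\bfH(\div,\Omega)$ with $\|\div\tau\|_{\bfL^2(\Omega)}=\|g\|_{\bfL^2(\Omega)}$. The principal obstacle is verifying the boundary condition $\tau\fkn=0$ on $\Gamma_1$: I will extract it by testing against arbitrary $v\in\bfH^1_{\Gamma_0}(\Omega)$, integrating by parts, and comparing with the variational equation, which forces the surface pairing over $\Gamma_1$ against every such $v$ to vanish, yielding $\tau\in\bfH_{\Gamma_1}(\div,\Omega)$.

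Combining the bounds above gives $\|\tau\|_{\bfH(\div,\Omega)}\lesssim\|g\|_{\bfL^2(\Omega)}+\|\eta\|_{\bfL^2(\Omega;\Skw)}$. Choosing $(g,\eta)=(u,\omega)$ and evaluating the supremum on the right-hand side of the lemma at this specific $\tau$ yields
\begin{equation*}
(u,\div\tau)_\Omega+(\omega,\tau)_\Omega=\|u\|_{\bfL^2(\Omega)}^2+\|\omega\|_{\bfL^2(\Omega;\Skw)}^2,
\end{equation*}
and division by $\|\tau\|_{\bfH(\div,\Omega)}$ delivers the stated inequality with $C_B$ the reciprocal of the continuity constant of the primal solution map.
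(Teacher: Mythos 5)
Your proof is correct, and it takes a genuinely different route from the paper. The paper does start from the same operator $A_\scV\tau=(\div\tau,\tfrac{1}{2}(\tau-\tau^\T))$, but instead of building a right-inverse it identifies the closed-operator adjoint $A_\scW(u,\omega)=-\nabla u+\omega$, proves an a priori bound $\|A_\scW(u,\omega)\|\gtrsim\|(u,\omega)\|$ by a Rellich--Kondrachov compactness argument (Lemma~\ref{lem:poincarelike}), and then transfers that bound to the stated inf-sup via the closed range theorem, passing through the quotient $\mD_\scV/\sfN(A_\scV)$ and the open mapping theorem. Your argument short-circuits all of that: you use the coercivity of the primal form to solve an auxiliary problem, set $\tau=-\C:\varepsilon(z)+\eta$, and exhibit directly a $\tau$ that nearly saturates the supremum, so no compactness argument, no quotient space, and no operator-transpose manipulations are needed. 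This is cleaner and more constructive, and in particular it produces an explicit test function; the paper's duality route has the compensating merit that the same closed-range machinery is reused verbatim in Lemma~\ref{lem:closedrangesymmetric}.

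One small logical fix: do not invoke Corollary~\ref{cor:allwellposed} for the solvability of the auxiliary primal problem, since that corollary is itself a consequence of the mutual well-posedness theorem that Lemma~\ref{lem:brezzicondition} is used to prove. What you actually need is only that the primal bilinear form $(\C:\nabla\cdot,\nabla\cdot)_\Omega$ is coercive on $\bfH^1_{\Gamma_0}(\Omega)$ when $\Gamma_0\neq\varnothing$, which is Korn's inequality (and is precisely how the paper justifies Lemma~\ref{lem:homogeneousuniquesolution}); cite that directly. With that replacement your proof stands as written, including the boundary verification $\langle\tau\fkn,v\rangle_{\bdry\Omega}=0$ for all $v\in\bfH^1_{\Gamma_0}(\Omega)$, which correctly forces $\tau\in\bfH_{\Gamma_1}(\div,\Omega)$, and the final evaluation $(u,\div\tau)_\Omega+(\omega,\tau)_\Omega=\|u\|_{\bfL^2(\Omega)}^2+\|\omega\|_{\bfL^2(\Omega;\Skw)}^2$ combined with $\|\tau\|_{\bfH(\div,\Omega)}\lesssim(\|u\|_{\bfL^2(\Omega)}^2+\|\omega\|_{\bfL^2(\Omega;\Skw)}^2)^{1/2}$.
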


\begin{proof}
The proof is very similar to that of Lemma \ref{lem:closedrangesymmetric}.
First consider
\begin{gather*}
	A_\scW:\mD_{\scW}\rightarrow L_\Mat\,,\qquad\qquad\qquad A_\scV:\mD_\scV\rightarrow L\,,\\
	A_\scW(u,\omega)=-\nabla u+\omega\,,\qquad\qquad
	A_\scV\tau=(\div\tau,\textstyle{\frac{1}{2}}(\tau-\tau^\T))\,,
\end{gather*}
where $\mD_\scW=\bfH^1_{\Gamma_0}(\Omega)\times\bfL^2(\Omega;\Skw)$, $\mD_\scV=\bfH_{\Gamma_1}(\div,\Omega)$, $L_\Mat=\bfL^2(\Omega;\Mat)$ and $L=\bfL^2(\Omega)\times\bfL^2(\Omega;\Skw)$.
Clearly in the topologies of $L$ and $L_\Mat$, the domains $\mD_{\scW}$ and $\mD_\scV$ are dense in $L$ and $L_\Mat$ respectively.
With these topologies $A_\scW$ and $A_\scV$ are well-defined closed operators.
If $\mD_\scW$ is endowed with the graph norm $\|(u,\omega)\|_{A_\scW}^2=\|(u,\omega)\|_{L}^2+\|A_\scW(u,\omega)\|_{L_\Mat}^2$ or with the standard norm $\|(u,\omega)\|_{\mD_\scW}^2=\|u\|_{\bfH^1(\Omega)}^2+\|\omega\|_{L_\Mat}^2$, then $A_\scW$ is a well-defined continuous operator.
Note $\|\cdot\|_{A_\scW}$ and $\|\cdot\|_{\mD_\scW}$ are equivalent norms.
Similarly, if $\mD_\scV$ is given the graph norm $\|\tau\|_{A_\scV}^2=\|\tau\|_{L_\Mat}^2+\|A_\scV\tau\|_L^2$ or the standard norm $\|\tau\|_{\mD_\scV}=\|\tau\|_{\bfH(\div,\Omega)}$ then $A_\scV$ is a well-defined continuous operator.
Note $\|\cdot\|_{A_\scV}$ and $\|\cdot\|_{\mD_\scV}$ are equivalent norms.

As closed operators, $A_\scW$ and $A_\scV$ are clearly adjoint to each other, so that $A_\scW^*=A_\scV$.
Moreover, if $A_\scW(u,\omega)=0$, then $\nabla u=\omega\in\bfL^2(\Omega;\Skw)$, so that $\varepsilon(u)=0$.
This implies $-\div(\C:\varepsilon(u))=0$ and by Lemma \ref{lem:homogeneousuniquesolution}, $u=0$ and $\omega=\nabla u=0$, so that $A_\scW$ is injective.
On the other hand, $A_\scV$ has a nontrivial null space, because if $A_\scV\tau=(0,0)$, then $\tau\in\sfN(A_\scV)=\{\tau_0\in\bfH_{\Gamma_1}(\div,\Omega;\Sym)\mid\div\tau_0=0\}$.

By Lemma \ref{lem:poincarelike}, it follows that for all $(u,\omega)\in\mD_{\scW}$, $\|A_\scW(u,\omega)\|_{L_\Mat}\geq\frac{1}{C_P}\|(u,\omega)\|_L$.
By the closed range theorem, $\|\tilde{A}_\scV[\tau]\|_L=\|A_\scV\tau\|_L\geq\frac{1}{C_P}\|[\tau]\|_{L_\Mat/\sfN(A_\scV)}$ for all $\tau\in\mD_\scV$, where $\tilde{A}_\scV:\tilde{\mD}_\scV\to L$ is defined by $\tilde{A}_\scV[\tau]=A_\scV\tau$, with $\tilde{\mD}_\scV=\mD_\scV/\sfN(A_\scV)$ and $\|[\tau]\|_{L_\Mat/\sfN(A_\scV)}=\inf_{\tau_0\in\sfN(A_\scV)}\|\tau+\tau_0\|_{L_\Mat}$.
Then, $\|\tilde{A}_\scV[\tau]\|_L\geq C_B\|[\tau]\|_{\tilde{\mD}_\scV}$ for all $\tau\in\mD_\scV$, where $\|[\tau]\|_{\tilde{\mD}_\scV}=\inf_{\tau_0\in\sfN(A_\scV)}\|\tau+\tau_0\|_{\mD_\scV}$, $C_B=C_{\scV\mD}\frac{1}{1+C_P^2}$ and $C_{\scV\mD}$ is the relevant equivalence constant between $\|\cdot\|_{A_\scV}$ and $\|\cdot\|_{\mD_\scV}$.
The closed range theorem also implies $\sfR(\tilde{A}_\scV)=\sfR(A_\scV)=L$ is closed, so that $\tilde{A}_\scV$ is bijective and by the open mapping theorem it is a homeomorphism with continuous inverse $\tilde{A}_\scV^{-1}:L\to\tilde{\mD}_\scV$.
Using the continuous operator transpose of $\tilde{A}_\scV$ and $\tilde{A}_\scV^{-1}$ as in the proof of Lemma \ref{lem:closedrangesymmetric} yields for all $x=(u,\omega)\in L$,
\begin{align*}
	\begin{aligned}
		C_B\|x\|_L
			&\leq\sup_{[\tau]\in\tilde{\mD}_\scV\setminus\{[0]\}}\frac{|(x,\tilde{A}_\scV[\tau])_\Omega|}{\|[\tau]\|_{\tilde{\mD}_\scV}}
				=\sup_{\tau^\perp\in Z\setminus\{0\}}\sup_{\tau_0\in\sfN(A_\scV)}
					\frac{|(x,A_\scV\tau^\perp)_\Omega|}{\inf_{\tau_0'\in\sfN(A_\scV)}\|\tau^\perp+\tau_0+\tau_0'\|_{\mD_\scV}}\\
			&=\sup_{\tau^\perp\in Z\setminus\{0\}}\sup_{\tau_0\in\sfN(A_\scV)}\sup_{\tau_0'\in\sfN(A_\scV)}
				\frac{|(x,A_\scV\tau^\perp)_\Omega|}{\|\tau^\perp+\tau_0+\tau_0'\|_{\mD_\scV}}
					=\sup_{\tau\in\mD_\scV\setminus\{0\}}\frac{|(x,A_\scV\tau)_\Omega|}{\|\tau\|_{\mD_\scV}}\,,
	\end{aligned}
\end{align*}
where $Z$ is any algebraic complement to $\sfN(A_\scV)$ so that $\mD_\scV=\sfN(A_\scV)\oplus Z$.
The result follows because $((u,\omega),A_\scV\tau)_\Omega=(u,\div\tau)_\Omega+(\omega,\tau)_\Omega$.
%
\end{proof}

Finally, we can proceed to proving the main result, which includes Theorem \ref{thm:mutuallywellposed}.

\begin{theorem}
The variational formulations $(\scS)$, $(\scU)$, $(\scD)$, $(\scM)$, $(\scP)$, $(\scS_\Sym)$, $(\scU_\Sym)$ and $(\scM_\Sym)$ are mutually ill or well-posed.
That is, if any single formulation is well-posed, then all others are also well-posed.
\end{theorem}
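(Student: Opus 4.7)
\emph{Proof plan.} We use $(\scP)$ as a hub and transfer well-posedness to each of the other formulations. By Lemma \ref{lem:Gammazeronotempty}, well-posedness of any single formulation forces $\Gamma_0\neq\varnothing$; under this assumption, coercivity of $b_0^\scP$ on $\bfH^1_{\Gamma_0}(\Omega)$ (via Korn's inequality and ellipticity of $\C$ on $\Sym$) makes $(\scP)$ classically well-posed via Lax-Milgram. Since Lemma \ref{lem:trivialcompatibility} reduces every compatibility space to $\{0\}$ whenever $\Gamma_0\neq\varnothing$, it suffices to verify the Babu\v{s}ka-Ne\v{c}as inf-sup condition for each of the other seven formulations, assuming $(\scP)$ is well-posed.

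The implications $(\scP)\Rightarrow(\scD)$, $(\scP)\Rightarrow(\scS_\Sym)$, and $(\scP)\Rightarrow(\scM_\Sym)$ are obtained by direct reduction. Given any continuous $\ell$ on the target test space, substitute the constitutive relation ($\sigma=\C:\varepsilon(u)$, or its dual $\A:\sigma=\varepsilon(u)$) and momentum conservation $-\div\sigma=f$ to collapse the system into a modified $(\scP)$ problem with augmented right-hand side. Solving this scalar elliptic problem yields $u$ with a stability bound, from which $\sigma$ is reconstructed in the appropriate $\bfH(\div,\Omega;\Sym)$-type space with a matching bound. The converse direction of Babu\v{s}ka-Ne\v{c}as (uniform existence plus stability implies inf-sup) then supplies the desired inf-sup constant. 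The equivalence $(\scS_\Sym)\Leftrightarrow(\scU_\Sym)$ is precisely Lemma \ref{lem:closedrangesymmetric}. The implication $(\scP)\Rightarrow(\scS)$ is likewise direct, as a first-order-system least-squares argument: since the test space in $(\scS)$ is purely $\bfL^2$, coercivity of $\|B(\cdot)\|_{V'}^2$ on the trial space reduces to a short computation exploiting Korn's inequality and integration by parts.

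For the remaining cases $(\scP)\Rightarrow(\scM)$ and $(\scP)\Rightarrow(\scU)$, where symmetry of $\sigma$ is enforced weakly via the skew multiplier $\omega$, we invoke Brezzi's saddle-point theorem. Treating $\sigma\in\bfH_{\Gamma_1}(\div,\Omega)$ as the primal variable and $(u,\omega)$ as the multiplier pair, the required multiplier inf-sup on $\tau\mapsto(u,\div\tau)_\Omega+(\omega,\tau)_\Omega$ is exactly Lemma \ref{lem:brezzicondition}. Coercivity of $(\A:\cdot,\cdot)_\Omega$ on the Brezzi kernel holds because the kernel conditions $\div\sigma=0$ and $(\sigma,w)_\Omega=0$ (for all $w\in\bfL^2(\Omega;\Skw)$) force $\sigma\in\bfH_{\Gamma_1}(\div,\Omega;\Sym)$ with vanishing divergence, a space on which $\A$ is elliptic. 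The case $(\scU)$ is then handled either by the same Brezzi argument or by a closed-range argument paralleling Lemma \ref{lem:closedrangesymmetric}, relating $(\scU)$ to $(\scM)$ (or to $(\scS)$) via operator adjoint.

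The principal obstacle is this final step. Because we trivially extended $\A$ via $\A|_\Skw=0$, the form $(\A:\cdot,\cdot)_\Omega$ fails to be elliptic on $\bfL^2(\Omega;\Mat)$, so Brezzi kernel-coercivity is far from automatic and hinges on the weak-symmetry equation restricting the Brezzi kernel to symmetric stresses. Lemmas \ref{lem:poincarelike} and \ref{lem:brezzicondition} are engineered to furnish precisely the multiplier inf-sup and companion Poincar\'e-type bound needed to close the Brezzi argument; combined with the reductions above, they complete the cycle and establish mutual well-posedness of all eight formulations.
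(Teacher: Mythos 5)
Your proposal takes a genuinely different route from the paper. The paper proves a cycle of inf-sup transfers $(\scU)\Rightarrow(\scD)\Rightarrow(\scP)\Rightarrow(\scS)\Rightarrow(\scS_\Sym)\Rightarrow(\scU_\Sym)\Rightarrow\{(\scU),(\scM_\Sym)\}$ and $(\scU)\Rightarrow(\scM)\Rightarrow(\scS)$, carefully bounding each inf-sup constant in terms of the previous one, and never singles out any one formulation as primary. You instead note that well-posedness of \emph{any} formulation forces $\Gamma_0\neq\varnothing$ (Lemma~\ref{lem:Gammazeronotempty}), which by Korn and Lax--Milgram makes $(\scP)$ classically coercive, and then you radiate well-posedness outward from $(\scP)$. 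This hub-and-spoke structure is logically valid and in some ways conceptually cleaner, since it isolates the single place where Korn is invoked; the price is that you rely on the converse Babu\v{s}ka--Ne\v{c}as characterization (uniform solvability plus stability implies inf-sup) rather than tracking constants explicitly, which the paper does.

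Most of your spokes are sound as sketched: the substitution reductions $(\scP)\Rightarrow(\scD),(\scS_\Sym),(\scM_\Sym)$ work, $(\scS_\Sym)\Leftrightarrow(\scU_\Sym)$ is Lemma~\ref{lem:closedrangesymmetric}, the FOSLS coercivity route to $(\scS)$ is standard, and the Brezzi argument for $(\scM)$ with multiplier inf-sup from Lemma~\ref{lem:brezzicondition} and kernel coercivity via weak symmetry is exactly right (and self-contained once $\Gamma_0\neq\varnothing$).

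The gap is the implication $(\scP)\Rightarrow(\scU)$, which you dispatch with a single sentence offering two alternatives, neither of which is immediate. The Brezzi saddle-point machinery does \emph{not} apply to $(\scU)$ as stated, because $X^\scU=\bfL^2(\Omega;\Sym)\times\bfL^2(\Omega)\times\bfL^2(\Omega;\Skw)$ and $Y^\scU=\bfH_{\Gamma_1}(\div,\Omega)\times\bfH^1_{\Gamma_0}(\Omega)$ are different spaces with no pairing structure of Brezzi type; you need the nonsymmetric Babu\v{s}ka--Ne\v{c}as version throughout. The ``closed-range argument paralleling Lemma~\ref{lem:closedrangesymmetric}'' is also not a direct port: in the symmetric case the operators $A_\scS,A_\scU$ both act on the same $L^2$-space $L=\bfL^2(\Omega;\Sym)\times\bfL^2(\Omega)$ and are adjoint up to the bounded isomorphism $\mathsf{M}=\mathrm{diag}(\C,I)$, but in the nonsymmetric case the domain $\bfH_{\Gamma_1}(\div,\Omega)\times\bfH^1_{\Gamma_0}(\Omega)$ sits inside $\bfL^2(\Omega;\Mat)\times\bfL^2(\Omega)$ while the codomain is $\bfL^2(\Omega;\Sym)\times\bfL^2(\Omega)\times\bfL^2(\Omega;\Skw)$, the signs on the divergence and skew terms do not align term-by-term, and a nontrivial isomorphism involving sign flips on individual components is required to relate the two operators before the closed range theorem can be invoked. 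You would also need to establish injectivity of the relevant adjoint, which is not addressed. A cleaner fix would be to follow the paper's chain $(\scS_\Sym)\Rightarrow(\scU_\Sym)\Rightarrow(\scU)$, but the last arrow is precisely the step where the skew multiplier $\omega$ has to be controlled separately via Lemma~\ref{lem:brezzicondition}, and your proposal does not spell that out either. Until this spoke is made precise, the argument does not close.
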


\begin{proof}
Assume one of the variational formulations is well-posed.
Then by Lemma \ref{lem:Gammazeronotempty}, $\Gamma_0\neq\varnothing$.
Using Lemma \ref{lem:trivialcompatibility}, it follows that for all formulations the compatibility space is trivial so the compatibility conditions are satisfied immediately for any linear form.

It remains to show that the positive inf-sup constants exist for the remaining formulations.
This is proved according to the following implication diagram.
\vspace{-2mm}
\begin{center}
  \begin{tikzpicture}[>=latex]
    \node (P0) at (90:1.7cm) {$(\scM)$};
    \node (P1) at (90+72:1.7cm) {$(\scS)$};
    \node (P2) at (90+2*72:1.7cm) {$(\scP)$};
    \node (P3) at (90+3*72:1.7cm) {$(\scD)$};
    \node (P4) at (90+4*72:1.7cm) {$(\scU)$};
    \node (P5) at (210:0.8cm) {$(\scS_\Sym)$};
    \node (P6) at (330:0.8cm) {$(\scU_\Sym)$};
    \node (P7) at (90:0.8cm) {$(\scM_\Sym)$};
    \draw
    (P0) edge[-implies,double equal sign distance] (P1)
    (P2) edge[-implies,double equal sign distance] (P1)
    (P3) edge[-implies,double equal sign distance] (P2)
    (P4) edge[-implies,double equal sign distance] (P3)
    (P4) edge[-implies,double equal sign distance] (P0)
    (P1) edge[-implies,double equal sign distance] (P5)
    (P5) edge[-implies,double equal sign distance] (P6)
    (P6) edge[-implies,double equal sign distance] (P4)
    (P6) edge[-implies,double equal sign distance] (P7)
    (P7) edge[-implies,double equal sign distance] (P5);
  \end{tikzpicture}
\end{center}
\vspace{-2mm}

$(\scS_\Sym)\Rightarrow(\scU_\Sym)$:
This is the content of Lemma \ref{lem:closedrangesymmetric}.

$(\scS)\Rightarrow(\scS_\Sym)$:
The inf-sup constant $\gamma^{\scS}>0$ is assumed to exist.
Let $x=(\sigma,u)\in X^{\scS_\Sym}\subseteq X^\scS$, $y=(\tau,v)\in Y^{\scS_\Sym}$ and $\tilde{y}=(y,w)\in Y^\scS$, so that $\|y\|_{Y^{\scS_\Sym}}\leq\|\tilde{y}\|_{Y^\scS}$.
Due to the symmetry of $\sigma$ it follows $b_0^{\scS_\Sym}(x,y)=b_0^{\scS}(x,\tilde{y})$.
Hence,
\begin{equation*}
	\gamma^{\scS}\|x\|_{X^{\scS_\Sym}}=\gamma^{\scS}\|x\|_{X^\scS}
		\leq\sup_{\tilde{y}\in Y^\scS\setminus\{0\}}\frac{|b_0^\scS(x,\tilde{y})|}{\|\tilde{y}\|_{Y^\scS}}
			=\sup_{\tilde{y}\in Y^\scS\setminus\{0\}}\frac{|b_0^{\scS_\Sym}(x,y)|}{\|\tilde{y}\|_{Y^\scS}}
				\leq\sup_{y\in Y^{\scS_\Sym}\setminus\{0\}}\frac{|b_0^{\scS_\Sym}(x,y)|}{\|y\|_{Y^{\scS_\Sym}}}\,,
\end{equation*}
so that the desired inf-sup constant $\gamma^{\scS_\Sym}=\gamma^{\scS}>0$ exists and $(\scS_\Sym)$ is well-posed.

$(\scU_\Sym)\Rightarrow(\scU)$:
Let the inf-sup constant $\gamma^{\scU_\Sym}>0$ exist.
Let $x=(\sigma,u)\in X^{\scU_\Sym}$, $\tilde{x}=(x,\omega)\in X^\scU$, $y_\Sym=(\tau_\Sym,v)\in Y^{\scU_\Sym}\subseteq Y^\scU$ and $y=(\tau,v)\in Y^\scU$.
Clearly, $b_0^{\scU_\Sym}(x,y_\Sym)=b_0^{\scU}(\tilde{x},y_\Sym)$, and
\begin{equation*}
	\gamma^{\scU_\Sym}\|x\|_{X^{\scU_\Sym}}
		\leq\sup_{y_\Sym\in Y^{\scU_\Sym}\setminus\{0\}}\frac{|b_0^{\scU_\Sym}(x,y_\Sym)|}{\|y_\Sym\|_{Y^{\scU_\Sym}}}
			=\sup_{y_\Sym\in Y^{\scU_\Sym}\setminus\{0\}}\frac{|b_0^\scU(\tilde{x},y_\Sym)|}{\|y_\Sym\|_{Y^{\scU_\Sym}}}
				\leq\sup_{y\in Y^\scU\setminus\{0\}}\frac{|b_0^\scU(\tilde{x},y)|}{\|y\|_{Y^\scU}}\,.
\end{equation*}
Due to $\|\tilde{x}\|_{X^\scU}^2=\|x\|_{X^{\scU_\Sym}}^2+\|\omega\|_{\bfL^2(\Omega;\Skw)}^2$, it remains to find a bound for $\|\omega\|_{\bfL^2(\Omega;\Skw)}$.
Let $y_0=(\tau,0)\in Y^\scU$, so that $\|y_0\|_{Y^\scU}=\|\tau\|_{\bfH(\div,\Omega)}\geq\|\tau\|_{\bfL^2(\Omega;\Mat)}$ and $(u,\div\tau)_\Omega+(\omega,\tau)_\Omega=b_0^\scU(\tilde{x},y_0)-(\A:\sigma,\tau)_\Omega$.
Then, by Lemma \ref{lem:brezzicondition} it follows
\begin{equation*}
	\begin{aligned}
		C_B\|\omega\|_{\bfL^2(\Omega;\Skw)}
			&\leq\sup_{\tau\in\bfH_{\Gamma_1}(\div,\Omega)\setminus\{0\}}
				\frac{|(u,\div\tau)_\Omega+(\omega,\tau)_\Omega|}{\|\tau\|_{\bfH(\div,\Omega)}}
					=\sup_{\tau\in\bfH_{\Gamma_1}(\div,\Omega)\setminus\{0\}}
						\frac{|b_0^\scU(\tilde{x},y_0)-(\A:\sigma,\tau)_\Omega|}{\|\tau\|_{\bfH(\div,\Omega)}}\\
			&\leq\sup_{\tau\in\bfH_{\Gamma_1}(\div,\Omega)\setminus\{0\}}\frac{|b_0^\scU(\tilde{x},y_0)|}{\|y_0\|_{Y^\scU}}
				+\sup_{\tau\in\bfL^2(\Omega;\Mat)}\frac{|(\A:\sigma,\tau)_\Omega|}{\|\tau\|_{\bfL^2(\Omega;\Mat)}}\\
			&\leq\sup_{y\in Y^\scU\setminus\{0\}}\frac{|b_0^\scU(\tilde{x},y)|}{\|y\|_{Y^\scU}}
				+\|\A\|\|\sigma\|_{\bfL^2(\Omega;\Sym)}
					\leq\Big(1+\frac{\|\A\|}{\gamma^{\scU_\Sym}}\Big)
						\sup_{y\in Y^\scU\setminus\{0\}}\frac{|b_0^\scU(\tilde{x},y)|}{\|y\|_{Y^\scU}}\,,
	\end{aligned}
\end{equation*}
since $\|\sigma\|_{\bfL^2(\Omega;\Sym)}\leq\|x\|_{X^{\scU_\Sym}}$.
Therefore, the existence of the desired inf-sup constant $\gamma^\scS>0$ defined by $(\gamma^\scU)^{-2}=\frac{1}{C_B^2}(1+\frac{\|\A\|}{\gamma^{\scU_\Sym}})^2+(\frac{1}{\gamma^{\scU_\Sym}})^2$ is ensured.

$(\scU)\Rightarrow(\scM)$:
The inf-sup constant $\gamma^{\scU}>0$ is assumed to exist.
Let $x=(\sigma,u,\omega)\in X^\scM$ and $x_\Sym=(\sigma_\Sym,u,\omega)\in X^\scU$, where $\sigma_\Sym=\frac{1}{2}(\sigma+\sigma^\T)$ and $\sigma_\Skw=\frac{1}{2}(\sigma-\sigma^\T)$.
Since $(\sigma_\Sym,\sigma_\Skw)_\Omega=0$, it follows  $\|x\|_{X^\scM}^2=\|x_\Sym\|_{X^\scU}^2+\|\sigma_\Skw\|_{\bfL^2(\Omega;\Skw)}^2+\|\div\sigma\|_{\bfL^2(\Omega)}^2$.
Let $\tilde{y}_w=(0,0,w)\in Y^\scM$ and $\tilde{y}=(\tau,v,w)\in Y^\scM$ so that $\|\tilde{y}_w\|_{Y^\scM}=\|w\|_{\bfL^2(\Omega;\Skw)}$.
Then, it is clear $b_0^\scM(x,\tilde{y}_w)=(\sigma_\Skw,w)_\Omega$, and
\begin{equation*}
	\|\sigma_\Skw\|_{\bfL^2(\Omega;\Skw)}
		=\sup_{w\in\bfL^2(\Omega;\Skw)\setminus\{0\}}\frac{|(\sigma_\Skw,w)_\Omega|}{\|w\|_{\bfL^2(\Omega;\Skw)}}
			=\sup_{w\in\bfL^2(\Omega;\Skw)\setminus\{0\}}\frac{|b_0^\scM(x,\tilde{y}_w)|}{\|\tilde{y}_w\|_{Y^\scM}}
				\leq\sup_{\tilde{y}\in Y^\scM\setminus\{0\}}\frac{|b_0^\scM(x,\tilde{y})|}{\|\tilde{y}\|_{Y^\scM}}\,.
\end{equation*}
Next, let $y=(\tau,v)\in Y^\scU$ and $\tilde{y}_0=(y,0)\in Y^\scM$, so that $\|\tilde{y}_0\|_{Y^\scM}\leq\|y\|_{Y^\scU}$ and $\|v\|_{\bfH^1(\Omega)}\leq\|y\|_{Y^\scU}$.
The distributional identity $-(\div\sigma,v)_\Omega=(\sigma,\nabla v)_\Omega$ holds because 
$\sigma\in\bfH_{\Gamma_1}(\div,\Omega)$ and $v\in\bfH^1_{\Gamma_0}(\Omega)$.
A careful calculation shows that $b_0^{\scM}(x,\tilde{y}_0)=b_0^{\scU}(x_\Sym,y)+(\sigma_\Skw,\nabla v)_\Omega$.
Therefore,
\begin{equation*}
	\begin{aligned}
		\gamma^\scU\|x_\Sym\|_{X^\scU}&\leq\sup_{y\in Y^\scU\setminus\{0\}}\frac{|b_0^\scU(x_\Sym,y)|}{\|y\|_{Y^\scU}}
			\leq\sup_{\tilde{y}_0\in (Y^\scU\setminus\{0\})\times\{0\}}\frac{|b_0^\scM(x,\tilde{y}_0)|}{\|\tilde{y}_0\|_{Y^\scM}}
				+\sup_{v\in\bfH_{\Gamma_0}^1(\Omega)\setminus\{0\}}\frac{|(\sigma_\Skw,\nabla v)_\Omega|}{\|v\|_{\bfH^1(\Omega)}}\\
		&\leq\sup_{\tilde{y}\in Y^\scM\setminus\{0\}}\frac{|b_0^\scM(x,\tilde{y})|}{\|\tilde{y}\|_{Y^\scM}}
			+\|\sigma_\Skw\|_{\bfL^2(\Omega;\Skw)}
				\sup_{v\in\bfH_{\Gamma_0}^1(\Omega)\setminus\{0\}}\frac{\|\nabla v\|_{\bfL^2(\Omega)}}{\|v\|_{\bfH^1(\Omega)}}
		\leq 2\sup_{\tilde{y}\in Y^\scM\setminus\{0\}}\frac{|b_0^\scM(x,\tilde{y})|}{\|\tilde{y}\|_{Y^\scM}}\,.
	\end{aligned}
\end{equation*}
Finally, let $\tilde{y}_v=(0,v,0)\in Y^\scM$ so that $\|\tilde{y}_v\|_Y^\scM=\|v\|_{\bfL^2(\Omega)}$ and $-(\div\sigma,v)_\Omega=b_0^\scM(x,\tilde{y}_v)$.
Then,
\begin{equation*}
	\|\div\sigma\|_{\bfL^2(\Omega)}=\sup_{v\in\bfL^2(\Omega)\setminus\{0\}}\frac{|(\div\sigma,v)_\Omega|}{\|v\|_{\bfL^2(\Omega)}}
		=\sup_{v\in\bfL^2(\Omega)\setminus\{0\}}\frac{|b_0^\scM(x,\tilde{y}_v)|}{\|\tilde{y}_v\|_{Y^\scM}}
			\leq\sup_{\tilde{y}\in Y^\scM\setminus\{0\}}\frac{|b_0^\scM(x,\tilde{y})|}{\|\tilde{y}\|_{Y^\scM}}\,,
\end{equation*}
which implies that $\gamma^\scM>0$ defined by $(\gamma^\scM)^2=\frac{(\gamma^\scU)^2}{4+2(\gamma^\scU)^2}$ is the desired inf-sup constant.

$(\scU_\Sym)\Rightarrow(\scM_\Sym)$:
This is proved analogously to $(\scU)\Rightarrow(\scM)$, but ignoring the calculations associated to the term $\sigma_\Skw$, which vanishes in this symmetric setting.

$(\scM)\Rightarrow(\scS)$:
The inf-sup constant of $(\scM)$, $\gamma^{\scM}>0$, is assumed to exist.
Let $x=(\sigma,u)\in X^\scS$, $\tilde{x}=(x,\frac{1}{2}(\nabla u-\nabla u^\T))\in X^\scM$, $y_\scM=(\tau,v,w)\in Y^\scM$ and $y_\scS=(\A:\tau,v,w)\in Y^\scS$. 
Then, notice that $\|x\|_{X^\scS}^2=\|\tilde{x}\|_{X^\scM}^2+\|\varepsilon(u)\|_{\bfL^2(\Omega;\Sym)}^2$ and $\|\A:\tau\|_{\bfL^2(\Omega;\Sym)}\leq\|\A\|\|\tau\|_{\bfL^2(\Omega;\Mat)}$, so $\|y_\scS\|_{Y^\scS}\leq M_\A\|y_\scM\|_{Y^\scM}$ where $M_\A=\max\{\|\A\|,1\}$.
The distributional identity $(u,\div\tau)_\Omega=-(\nabla u,\tau)_\Omega$ holds because $u\in\bfH^1_{\Gamma_0}(\Omega)$ and $\tau\in\bfH_{\Gamma_1}(\div,\Omega)$, and implies that $b_0^{\scM}(\tilde{x},y_\scM)=b_0^{\scS}(x,y_\scS)$.
Hence, 
\begin{equation*}
	\begin{aligned}
		\gamma^\scM\|\tilde{x}\|_{X^\scM}&\leq\sup_{y_\scM\in Y^\scM\setminus\{0\}}\frac{|b_0^\scM(\tilde{x},y_\scM)|}{\|y_\scM\|_{Y^\scM}}
			\leq M_\A\sup_{y_\scM\in Y^\scM,y_\scS\neq0}\frac{|b_0^\scS(x,y_\scS)|}{\|y_\scS\|_{Y^\scS}}
				\leq M_\A\sup_{y\in Y^\scS\setminus\{0\}}\frac{|b_0^\scS(x,y)|}{\|y\|_{Y^\scS}}\,.
	\end{aligned}
\end{equation*}
It remains to find a bound for $\|\varepsilon(u)\|_{\bfL^2(\Omega;\Sym)}$.
Let $y_0=(\A:\tau_\Sym,0,0)\in Y^\scS$ for $\tau_\Sym\in\bfL^2(\Omega;\Sym)$, so that $\|y_0\|_{Y^\scS}\leq\|\A\|\|\tau_\Sym\|_{\bfL^2(\Omega;\Sym)}$.
Notice $(\varepsilon(u),\tau_\Sym)_\Omega=(\sigma,\A:\tau_\Sym)_\Omega-b_0^\scS(x,y_0)$.
Therefore,
\begin{equation*}
	\begin{aligned}
	\|\varepsilon(u)\|_{\bfL^2(\Omega;\Sym)}
		&=\sup_{\tau_\Sym\in\bfL^2(\Omega;\Sym)\setminus\{0\}}
			\frac{|(\varepsilon(u),\tau_\Sym)_\Omega|}{\|\tau_\Sym\|_{\bfL^2(\Omega;\Sym)}}
				=\sup_{\tau_\Sym\in\bfL^2(\Omega;\Sym)\setminus\{0\}}
					\frac{|b_0^\scS(x,y_0)-(\sigma,\A:\tau_\Sym)_\Omega|}{\|\tau_\Sym\|_{\bfL^2(\Omega;\Sym)}}\\
		&\leq\|\A\|\sup_{\tau_\Sym\in\bfL^2(\Omega;\Sym)\setminus\{0\}}\frac{|b_0^\scS(x,y_0)|}{\|y_0\|_{Y^\scS}}
			+\|\A\|\sup_{\tau_\Sym\in\bfL^2(\Omega;\Sym)\setminus\{0\}}
				\frac{|(\sigma,\A:\tau_\Sym)_\Omega|}{\|\A:\tau_\Sym\|_{\bfL^2(\Omega;\Sym)}}\\
		&\leq\|\A\|\sup_{y\in Y^\scS\setminus\{0\}}\frac{|b_0^\scS(x,y)|}{\|y\|_{Y^\scS}}+\|\A\|\|\sigma\|_{\bfL^2(\Omega;\Sym)}\,,					
	\end{aligned}
\end{equation*}
where it is used that $\A$ is bijective on $\bfL^2(\Omega;\Sym)$.
Using that $\|\sigma\|_{\bfL^2(\Omega;\Sym)}\leq\|\tilde{x}\|_{X^\scM}$, the existence of the inf-sup constant $\gamma^\scS>0$ defined by $(\gamma^\scS)^{-2}=\|\A\|^2(1+\frac{M_\A}{\gamma^\scM})^2+(\frac{M_\A}{\gamma^\scM})^2$ is ensured.

$(\scM_\Sym)\Rightarrow(\scS_\Sym)$:
This is proved analogously to $(\scM)\Rightarrow(\scS)$.

$(\scU)\Rightarrow(\scD)$:
The inf-sup constant of $(\scU)$, $\gamma^{\scU}>0$, is assumed to exist.
Let $x=(\sigma,u)\in X^\scD$, $\tilde{x}=(x,\frac{1}{2}(\nabla u-\nabla u^\T))\in X^\scU$, $y_\scU=(\tau,v)\in Y^\scU$, $y_\scD=(\A:\tau,v)\in Y^\scD$, and $y_0=(\A:\tau_\Sym,0)\in Y^\scD$ for $\tau_\Sym\in\bfL^2(\Omega;\Sym)$.
The proof is then the same as that for $(\scM)\Rightarrow(\scS)$, but replacing $\scM$ by $\scU$ and $\scS$ by $\scD$.

$(\scD)\Rightarrow(\scP)$:
Assume the inf-sup constant $\gamma^{\scD}>0$ exists.
Let $x=u\in X^\scP$, $\tilde{x}=(\C:\nabla u,x)\in X^\scD$, $y=v\in Y^\scP$ and $\tilde{y}=(\tau,y)\in Y^\scD$, so that $\|x\|_{X^\scP}\leq\|\tilde{x}\|_{X^\scD}$ and $\|y\|_{Y^\scP}\leq\|\tilde{y}\|_{Y^\scD}$.
Clearly it holds that $b_0^\scD(\tilde{x},\tilde{y})=b_0^\scP(x,y)$.
Then,
\begin{equation*}
	\gamma^\scD\|x\|_{X^\scP}\leq\gamma^{\scD}\|\tilde{x}\|_{X^\scD}
		\leq\sup_{\tilde{y}\in Y^\scD\setminus\{0\}}\frac{|b_0^\scD(\tilde{x},\tilde{y})|}{\|\tilde{y}\|_{Y^\scD}}
			=\sup_{\tilde{y}\in Y^\scD\setminus\{0\}}\frac{|b_0^\scP(x,y)|}{\|\tilde{y}\|_{Y^\scD}}
				\leq\sup_{y\in Y^\scP\setminus\{0\}}\frac{|b_0^\scP(x,y)|}{\|y\|_{Y^\scP}}\,,
\end{equation*}
so that the desired inf-sup constant $\gamma^\scP=\gamma^\scD>0$ exists and $(\scP)$ is well-posed.

$(\scP)\Rightarrow(\scS)$:
The inf-sup constant $\gamma^{\scP}>0$ is assumed to exist.
Let $x=u\in X^\scP$, $\tilde{x}=(\sigma,x)\in X^\scS$, $y=v\in Y^\scP$ and $\tilde{y}_v=(-\varepsilon(v),v,\frac{1}{2}(\nabla v-\nabla v^\T))\in Y^\scS$.
Then, notice that $\|y\|_{Y^\scP}=\|\tilde{y}_v\|_{Y^\scS}$ and that $\|\tilde{x}\|_{X^\scS}^2=\|x\|_{X^\scP}^2+\|\sigma\|_{\bfH(\div,\Omega)}^2$.
A careful calculation yields $b_0^\scS(\tilde{x},\tilde{y}_v)=b_0^\scP(x,y)$.
Therefore,
\begin{equation*}
	\gamma^{\scP}\|x\|_{X^\scP}\leq\sup_{y\in Y^\scP\setminus\{0\}}\frac{|b_0^\scP(x,y)|}{\|y\|_{Y^\scP}}
		=\sup_{y\in Y^\scP\setminus\{0\}}\frac{|b_0^\scS(\tilde{x},\tilde{y}_v)|}{\|\tilde{y}_v\|_{Y^\scS}}
			\leq\sup_{\tilde{y}\in Y^\scS\setminus\{0\}}\frac{|b_0^\scS(\tilde{x},\tilde{y})|}{\|\tilde{y}\|_{Y^\scS}}\,.
\end{equation*}
Next, consider $\xi\in\bfL^2(\Omega;\Mat)$ which is decomposed into $\xi_\Sym=\frac{1}{2}(\xi+\xi^\T)$ and $\xi_\Skw=\frac{1}{2}(\xi-\xi^\T)$, and let $\tilde{y}_\xi=(\xi_\Sym,0,\xi_\Skw)$, so that $\|\xi\|_{\bfL^2(\Omega;\Mat)}=\|\tilde{y}_\xi\|_{Y^\scS}$.
Notice that $b_0^\scS(\tilde{x},\tilde{y}_\xi)=(\sigma,\xi)_\Omega-(\C:\nabla u,\xi_\Sym)_\Omega$.
Hence,
\begin{equation*}
	\begin{aligned}
		\|\sigma\|_{\bfL^2(\Omega;\Mat)}
			&=\sup_{\xi\in\bfL^2(\Omega;\Mat)\setminus\{0\}}\frac{|(\sigma,\xi)_\Omega|}{\|\xi\|_{\bfL^2(\Omega;\Mat)}}
				=\sup_{\xi\in\bfL^2(\Omega;\Mat)\setminus\{0\}}
					\frac{|b_0^\scS(\tilde{x},\tilde{y}_\xi)+(\C:\nabla u,\xi_\Sym)_\Omega|}{\|\tilde{y}_\xi\|_{Y^\scS}}\\
		&\leq\sup_{\xi\in\bfL^2(\Omega;\Mat)\setminus\{0\}}\frac{|b_0^\scS(\tilde{x},\tilde{y}_\xi)|}{\|\tilde{y}_\xi\|_{Y^\scS}}
			+\sup_{\xi_\Sym\in\bfL^2(\Omega;\Sym)\setminus\{0\}}\frac{|(\C:\nabla u,\xi_\Sym)_\Omega|}{\|\xi_\Sym\|_{\bfL^2(\Omega;\Sym)}}\\
		&\leq\sup_{\tilde{y}\in Y^\scS\setminus\{0\}}\frac{|b_0^\scS(\tilde{x},\tilde{y})|}{\|\tilde{y}\|_{Y^\scS}}
			+\|\C:\nabla u\|_{\bfL^2(\Omega;\Sym)}
				\leq\sup_{\tilde{y}\in Y^\scS\setminus\{0\}}\frac{|b_0^\scS(\tilde{x},\tilde{y})|}{\|\tilde{y}\|_{Y^\scS}}
					+\|\C\|\|x\|_{X^\scP}\,.			
	\end{aligned}			
\end{equation*}
Finally, let $\tilde{y}_0=(0,v,0)\in Y^\scS$ so that $\|\tilde{y}_0\|_Y^\scS=\|v\|_{\bfL^2(\Omega)}$ and $-(\div\sigma,v)_\Omega=b_0^\scS(\tilde{x},\tilde{y}_0)$.
Then,
\begin{equation*}
	\|\div\sigma\|_{\bfL^2(\Omega)}=\sup_{v\in\bfL^2(\Omega)\setminus\{0\}}\frac{|(\div\sigma,v)_\Omega|}{\|v\|_{\bfL^2(\Omega)}}
		=\sup_{v\in\bfL^2(\Omega)\setminus\{0\}}\frac{|b_0^\scS(\tilde{x},\tilde{y}_0)|}{\|\tilde{y}_0\|_{Y^\scS}}
			\leq\sup_{\tilde{y}\in Y^\scS\setminus\{0\}}\frac{|b_0^\scS(\tilde{x},\tilde{y})|}{\|\tilde{y}\|_{Y^\scS}}\,.
\end{equation*}
Therefore, the inf-sup constant $\gamma^\scS>0$ exists and is defined by $(\gamma^\scS)^{-2}=(1+\frac{\|\C\|}{\gamma^\scP})^2+(\frac{1}{\gamma^\scP})^2+1$.
\end{proof}

\end{document}